\def\RR{\mathbb{R}}
\def\ZZ{\mathbb{Z}}
\def\CC{\mathbb{C}}
\def\NN{\mathbb{N}}
\newcommand{\al}{{\alpha}}
\newcommand{\la}{{\lambda}}
\newcommand{\f}{{\varphi}}
\newcommand{\bs}{{\boldsymbol{s}}}
\newcommand{\bal}{{\boldsymbol{\alpha}}}
\newcommand{\wt}{\widetilde}
\newcommand{\Wr}{\text{Wr}}
\def\Ddots{\mathinner{\mkern1mu\raise\p@
\vbox{\kern7\p@\hbox{.}}\mkern2mu
\raise4\p@\hbox{.}\mkern2mu\raise7\p@\hbox{.}\mkern1mu}}
\newcommand{\cH}{\mathcal{H}}
\newcommand{\fC}{\mathfrak{C}}
\newcommand{\fD}{\mathfrak{D}}
\newcommand{\cA}{\mathcal{A}}
\newcommand{\cW}{\mathcal{W}}
\newcommand{\cD}{\mathcal{D}}
\newcommand{\cL}{\mathcal{L}}
\newcommand{\cQ}{\mathcal{Q}}
\newcommand{\cR}{\mathcal{R}}
\DeclareMathOperator{\Ker}{Ker}
\DeclareMathOperator{\dom}{dom}
\newcommand{\ci}[1]{_{ {}_{\scriptstyle #1}}}
\newcommand{\ti}[1]{_{\scriptstyle \text{\rm #1}}}
\chardef\mathlig@atcode\count255
\def\actively#1#2{\begingroup\uccode`\~=`#2\relax\uppercase{\endgroup#1~}}
\def\mathlig@gobble{\afterassignment\mathlig@next@cmd\let\mathlig@next= }
\def\mathlig@delim{\mathlig@delim}
\def\mathlig@defcs#1{\expandafter\def\csname#1\endcsname}
\def\mathlig@let@cs#1#2{\expandafter\let\expandafter#1\csname#2\endcsname}
\def\mathlig@appendcs#1#2{\expandafter\edef\csname#1\endcsname{\csname#1\endcsname#2}}
\def\mathlig#1#2{\mathlig@checklig#1\mathlig@end\mathlig@defcs{mathlig@back@#1}{#2}\ignorespaces}
\def\mathlig@checklig#1#2\mathlig@end{%
 \expandafter\ifx\csname mathlig@forw@#1\endcsname\relax
 \expandafter\mathchardef\csname mathlig@back@#1\endcsname=\mathcode`#1%
 \mathcode`#1"8000\actively\def#1{\csname mathlig@look@#1\endcsname}%
 \mathlig@dolig#1\mathlig@delim
\fi
\mathlig@checksuffix#1#2\mathlig@end
}
\def\mathlig@checksuffix#1#2\mathlig@end{%
\ifx\mathlig@delim#2\mathlig@delim\relax\else\mathlig@checksuffix@{#1}#2\mathlig@end\fi
}
\def\mathlig@checksuffix@#1#2#3\mathlig@end{%
\expandafter\ifx\csname mathlig@forw@#1#2\endcsname\relax\mathlig@dosuffix{#1}{#2}\fi
\mathlig@checksuffix{#1#2}#3\mathlig@end
}
\def\mathlig@dosuffix#1#2{%
\mathlig@appendcs{mathlig@toks@#1}{#2}%
\mathlig@dolig{#1}{#2}\mathlig@delim
}
\def\mathlig@dolig#1#2\mathlig@delim{%
 \mathlig@defcs{mathlig@look@#1#2}{%
 \mathlig@let@cs\mathlig@next{mathlig@forw@#1#2}\futurelet\mathlig@next@tok\mathlig@next}%
 \mathlig@defcs{mathlig@forw@#1#2}{%
  \mathlig@let@cs\mathlig@next{mathlig@back@#1#2}%
  \mathlig@let@cs\checker{mathlig@chck@#1#2}%
  \mathlig@let@cs\mathligtoks{mathlig@toks@#1#2}%
  \expandafter\ifx\expandafter\mathlig@delim\mathligtoks\mathlig@delim\relax\else
  \expandafter\checker\mathligtoks\mathlig@delim\fi
  \mathlig@next
 }%
 \mathlig@defcs{mathlig@toks@#1#2}{}%
 \mathlig@defcs{mathlig@chck@#1#2}##1##2\mathlig@delim{%
  \ifx\mathlig@next@tok##1%
   \mathlig@let@cs\mathlig@next@cmd{mathlig@look@#1#2##1}\let\mathlig@next\mathlig@gobble
  \fi
  \ifx\mathlig@delim##2\mathlig@delim\relax\else
   \csname mathlig@chck@#1#2\endcsname##2\mathlig@delim
  \fi
 }%
%
 \ifx\mathlig@delim#2\mathlig@delim\else
  \mathlig@defcs{mathlig@back@#1#2}{\csname mathlig@back@#1\endcsname #2}%
 \fi
}%
\mathchardef\ordinarycolon\mathcode`\:
\def\vcentcolon{\mathrel{\mathop\ordinarycolon}}
\numberwithin{equation}{section}
\theoremstyle{plain}
\newtheorem{theo}{Theorem}[section]
\newtheorem{cor}[theo]{Corollary}
\newtheorem{lem}[theo]{Lemma}
\newtheorem{prop}[theo]{Proposition}
\newtheorem{ass}[theo]{Assumption}
\newtheorem{remme}[theo]{Remark}
\theoremstyle{definition}
\newtheorem{defn}[theo]{Definition}
\newtheorem*{theorem*}{Theorem}
\theoremstyle{remark}
\newtheorem*{ex*}{Example}
\theoremstyle{remark}
\newtheorem*{exs*}{Examples}
\theoremstyle{remark}
\newtheorem*{rems*}{Remarks}
\theoremstyle{remark}
\newtheorem*{rem*}{Remark}
\theoremstyle{definition}
\newtheorem*{idea*}{Idea}
\theoremstyle{remark}
\theoremstyle{remark}
\title[Spectrum of Exceptional Laguerre Operators]{The Spectrum of Self-Adjoint Extensions associated with Exceptional Laguerre Differential Expressions}
\author{Dale~Frymark}
\address{Department of Theoretical Physics, Nuclear Physics Institute, Czech Academy of Sciences, 25068 Řež, Czech Republic
}
\email{frymark@ujf.cas.cz}
\author{Jessica Stewart Kelly}
\address{Department of Mathematics, Christopher Newport University, 1 Avenue of the Arts, Newport News, VA 23606, USA.}
\email{jessica.kelly@cnu.edu}
\keywords{Exceptional Orthogonal Polynomials, Self-adjoint Extensions, Weyl m-function, Spectral Theory, Boundary Triples.}
\subjclass[2010]{34L05, 33D45, 47E05, 47A10}
\begin{document}

\begin{abstract}
Exceptional Laguerre-type differential expressions make up an infinite class of Schr\"odinger operators having rational potentials and one limit-circle endpoint.  In this manuscript, the spectrum of all self-adjoint extensions for a general exceptional Laguerre-type differential expression is given in terms of the Darboux transformations which relate the expression to the classical Laguerre differential expression. The spectrum is extracted from an explicit Weyl $m$-function, up to a sign.

The construction relies primarily on two tools: boundary triples, which parameterize the self-adjoint extensions and produce the Weyl $m$-functions, and manipulations of Maya diagrams and partitions, which classify the seed functions defining the relevant Darboux transforms. Several examples are presented. 
\end{abstract}

\maketitle

\setcounter{tocdepth}{1}
\tableofcontents

\section{Introduction}\label{s-intro}

First introduced in \cite{OS,OS2} in 2009, exceptional orthogonal polynomials (XOPs) describe a class of Sturm--Liouville polynomial families where some (exceptional) degrees of polynomials are missing.  These XOPs form eigenfunctions of rational Sturm--Liouville equations, but fall outside of the restrictions of the classical B\"ochner theorem \cite{B} that characterizes the classical orthogonal polynomial (COP) systems of Hermite, Laguerre and Jacobi.  The properties of XOP have been widely studied over the past 13 years \cite{GKM1,GKM2, GKM5, MQ} as this area has brought forth interesting connections between XOP families and their COP relatives as well as surprising observations such as the fact that although a finite number of degrees are omitted from the sequence of XOP solutions, each family is complete in the associated natural Hilbert space setting. 

The motivation of \cite{GKM1} to study these families was two fold---first, in response to a generalized B\"ochner problem posed in \cite{PT} and second, an interest in quantum mechanics. As such, there are numerous applications to mathematical physics. Perhaps most significantly, XOPs are, up to a gauge factor, eigenfunctions of exactly solvable potentials obtained by taking Darboux transforms of potentials, called rational extensions, from quantum mechanics \cite{G,MR,Q}. In particular, the harmonic oscillator, isotonic oscillator and trigonometric Darboux--P\"osch--Teller potential have rational extensions defined by Hermite, Laguerre and Jacobi polynomials, respectively. Many of these rational extensions are translationally shape invariant \cite{GGM2}, and their corresponding Darboux transforms are usually called supersymmetric quantum mechanical (SUSY QM) partnerships. 

Specific types of XOPs were initially introduced individually; well-known XOPs include Hermite XOPs \cite{GGM3}, Type I-III Laguerre XOPs \cite{LLMS} and Type I and II Jacobi XOPs \cite{LLK}. The spectral properties of the operators in each case have also been determined \cite{ALS, GGM3, LLK, LLMS, LLSW}. However, it was shown in \cite{GGM} that there are an infinite number of distinct XOP classes.  Although all XOP families are derived from a COP family, most of which do not fit into the finite pre-existing ``type'' classifications. In this manuscript, the focus will be on Laguerre-type XOP families and in particular, the associated spectral analysis.  

Generally speaking, every XOP family is related to a COP family by a sequence of Darboux transformations \cite{GKM3, GKM4, KLO, STZ}. When a suitable Darboux transformation is applied to a COP, the result is an XOP that is a eigenfunction for an exceptional eigenvalue problem.  Darboux transformations are often called ``state-deleting'' because when applied to an operator with a set of eigenfunctions (i.e.~an operator with COP solutions) they remove one or more of the eigenfunctions from this set. With regard to the spectrum, which consists only of eigenvalues, the term ``state-deleting'' is misleading as the set may remain the same or even include new points after a Darboux transform is performed.  It is also not immediately clear how the Darboux transformation impacts different self-adjoint extensions of the same symmetric differential expression; it is this affect of the Darboux transformation on the spectrum is what we aim to study. For instance, we ask does another extension compensate for these removed eigenfunctions in some way? Perturbation theory gives two immediate restrictions. First, because the spectrum of one self-adjoint extension is discrete, the spectrum of all self-adjoint extensions must be discrete, see e.g.~\cite{S2,S}. Second, the eigenvalues of extensions must change continuously with the parameterization of the self-adjoint extension.

To study the affect of the Darboux transformation on self-adjoint expressions, we exploit the intertwining nature of the COP and XOP expressions.  If the classical Laguerre expression is given in Schr\"odinger form, Darboux transforms can be classified via their changes to the potential thanks to the well-known Darboux--Crum formulas \cite{C}. These changes are given by a Wronskian whose entries are eigenfunctions of the COP expression. The process of determining the affect of the Darboux transformation on the spectrum requires manipulating general Wronskians into a standard form from which information can be extracted. Fortunately, this study may utilize shape invariance where many multi-step Darboux transformations can be shown to be equivalent up to a constant. The manuscripts \cite{BK} and \cite{GGM2} both give equivalence relations for Wronskians, and the methods used therein can be adapted for our purposes.

To exploit spectral information from the manipulated Wronskians, we use the framework of boundary triples.  This perspective is unique as compared to other spectral analyses of XOP families.  The theory of boundary triples provides a natural parameterization of all self-adjoint extensions in order to produce a Weyl $m$-function. The Weyl $m$-function associated with an operator provides all of the relevant spectral data. Here we are primarily focused on only the location of eigenvalues, but is is possible to extract the spectral measure and even eigenfunctions \cite{BFL}. The challenge of applying the theory in this context is that solutions to the general XOP must be normalized. This normalization simplifies to evaluating the transformed solution at $x=0$, which is achievable thanks to a standard form resulting from manipulation of the general Wronskians. Crucially, boundary triples for general XOP expressions are very easy to obtain, as they will carry the same structure as in the COP Laguerre expression with adjusted parameters.

\subsection*{Structure of the paper}\label{ss-structure}

The contents of this paper are as follows. Section \ref{s-prelims} reviews some essential facts about the formulation of XOP families via Darboux transforms as well as the basics of boundary triples and Weyl $m$--functions as they pertain to XOP expressions.  Maya diagrams are introduced as they are important to organizing information about the Wronskians associated with a Darboux transformation. Section \ref{s-type1} contains a relatively simple example, the Type I Laguerre XOP differential equation. Boundary triples are used to complete the spectral analysis of the Type I Laguerre XOP family of operators. This serves as a motivating example for the construction of the general methods in later sections. Note that some of the results in this example overlap with those of \cite{ALS,LLMS}. 

 A number of assumptions are made throughout the text as their need arises, but for the convenience of the reader we collect them here: Assumptions \ref{a-alpha}, \ref{a-partitions}, \ref{a-mayaorder}, \ref{a-signs} and \ref{a-type3}. Section \ref{s-mayaforxop} introduces the general notation and concepts necessary to discuss the seed functions and Wronskians that will define solutions of the XOP family. The Wronskians are defined by two Maya diagrams of seed functions: $M_1$ and $M_2$. This section also sets the parameters for the underlying Hilbert space of the XOP family in terms of these Maya diagrams.

Section \ref{s-manipulations} begins with the Frobenius analysis showing that the XOP expression has the limit-circle endpoint $x=0$ for similar parameter choices as the COP expression. Then we prove two central results: Theorems \ref{t-new4.2} and \ref{t-new4.22}. These theorems detail the shifting of the Maya diagrams from a general position into canonical and conjugate canonical positions, tracking constants and parameters along the way. The result of this shifting process is two solutions in standard form. Section \ref{s-normalizations} takes the Wronskians, which represent solutions of the XOP family, in these standard forms, and evaluates them at $x=0$. Effectively, dividing the solutions by these constants normalizes them with respect to the sesquilinear form. The resulting constants depend on many parameters, including $M_1$, $M_2$, $\al$ and the total number of seed functions in each position. Section \ref{s-mfunctions} combines the results of the previous two sections to properly augment the solutions of the XOP family in order to build a boundary triple. Construction of a general deficiency element then yields Weyl $m$-functions for each self-adjoint extension in the family, as shown in Corollary \ref{c-mfunctions}. A number of other observations and remarks are made in order to properly interpret the resulting expressions. Section \ref{s-example} illustrates the results of the manuscript by explicitly applying them to a previously unknown XOP family. The spectral properties of this family are surprising and, although any inverse spectral theory remains elusive, encourages future investigation. Lastly, a general summary of the work and closing remarks are made in Section \ref{s-conclusion}.

\section{Preliminaries}\label{s-prelims}

Generally, a XOP sequence $\{p_n\}_{\mathbb N_0\backslash A}$, where $p_n$ is a polynomial of degree $n$ and $A\subseteq \mathbb N_0$ is a non-empty finite set, satisfies the following properties:
\begin{enumerate}
    \item[(i)] each $y=p_n$ is a solution to the associated second-order eigenvalue problem;
    \item[(ii)] the associated eigenvalue problem admits no polynomial eigenfunctions having degree $n \in A$;
    \item[(iii)] the sequence $\{p_n\}_{\mathbb N_0\backslash A}$ is orthogonal on an open interval with respect to a positive measure; and
    \item [(iv)] all moments $\{\mu_n\}_{n=0}^\infty$ exist and are finite.  
\end{enumerate} Each XOP expression may be written as the composition of first-order operators or Darboux transformations; these first-order operators are derived from COP expressions.  We outline the process below.

Suppose $\ell$ is a second-order expression of the form \begin{equation}\label{e-express}
    \ell[y](x)=p(x)y''(x)+q(x)y'(x)+r(x)
\end{equation} whose associated eigenvalue equation is \begin{equation}\label{e-eigenvalue}
    \ell[y]=\lambda y,
\end{equation} where $\lambda\in \mathbb C$. 
Define rational functions
\begin{align*}
    P(x)&=\exp\left(\int \frac{q(z)}{p(z)}dz\right),\\
    W(x)&=\frac{P(x)}{p(x)}, \quad \mbox{ and }\\
    R(x)&=r(x)W(x).
\end{align*} Multiplying Eq.~\eqref{e-eigenvalue} by $W(x)$ yields the associated symmetric Sturm-Liouville form of Eq.~\eqref{e-express}
\begin{equation}
    P(y')'+Ry=\lambda Wy.
\end{equation} It follows that $W(x)$ is the weight function associated with the expression $\ell$.

Two second-order expressions $\ell$ and $\cL$ are \textit{gauge-equivalent} if there exists a rational function $\sigma$ such that 
\[\cL=\sigma \ell \sigma^{-1}.\]


In order to derive the XOP operator from its COP counterpart, we first need to rewrite $\ell$ as a composition of two first-order operators.  A quasi-rational function $\phi$ is a \textit{quasi-rational eigenfuction} for the expression $\ell$ if $\ell[\phi]=\lambda \phi$ for $\lambda\in \mathbb C$. For example, in the case of the Laguerre COP, the quasi-rational eigenfunctions that produce XOP systems are:
\begin{align}\label{e-phi1}
    \phi_1(x)&=L_m^\alpha(x)\\\label{e-phi2}
    \phi_2(x)&=e^xL_m^\alpha(-x)\\\label{e-phi3}
    \phi_3(x)&=x^{-\alpha}L_m^{-\alpha}(x)\\\label{e-phi4}
    \phi_4(x)&=e^xx^{-\alpha}L_m^\alpha(-x).
\end{align} The quasi-rational eigenfunction Eq.~\eqref{e-phi3} produces the Type I Laguerre XOP family. The reader is directed to Section \ref{s-example} for more details into the decomposition of the COP Laguerre expression and formulation of the Type I expression.  

\begin{prop}\cite[Proposition 3.5]{GGM} For a second-order differential operator $\ell[y]$ having rational coefficients, let $\phi$ be a quasi-rational eigenfunction of $\ell$ with eigenvalue $\lambda$, and let $b(x)$ be an arbitrary, non-zero rational function. Define rational functions
\begin{align*}
    w&=\frac{\phi'}{\phi},\\
    \widehat{b}&=\frac{p}{b},\quad \mbox{ and}\\
    \widehat{w}&=-w-\frac{q}{p}+\frac{\widehat{b}}{b}
\end{align*}
and first-order operators $A$ and $B$ by
\begin{equation}\label{e-operators}
    A[y] = b(y'-wy) \quad \mbox{ and }\quad B[y] = \widehat{b}(y'- \widehat{w}y).
\end{equation}
\end{prop}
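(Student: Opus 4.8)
The statement sets up a Darboux factorization, so the content I would prove is that $\ell = B\circ A + \lambda$, that the partner expression $\widehat\ell := A\circ B + \lambda$ is again a second-order operator with rational coefficients, and that $A$ and $B$ intertwine the two expressions via $A\circ\ell = \widehat\ell\circ A$ and $B\circ\widehat\ell = \ell\circ B$. The whole argument is driven by a single observation: $\phi$ lies in the kernel of $A$. Indeed, since $w = \phi'/\phi$, equation~\eqref{e-operators} gives $A[\phi] = b(\phi' - w\phi) = b(\phi' - \phi') = 0$.

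First I would compute the composition $B\circ A$ from~\eqref{e-operators}. Applying $B$ to $A[y] = by' - bwy$, the top-order term picks up the coefficient $\widehat b\, b = (p/b)\,b = p$, so $B\circ A$ has the same leading coefficient as $\ell$. A short computation shows that the coefficient of $y'$ in $B\circ A$ reduces to $q$ exactly because of how $\widehat w$ is defined, that is, $\widehat w$ is precisely the choice that forces the first-order terms of $B\circ A$ and of $\ell$ to agree. Consequently the difference $(\ell - \lambda) - B\circ A$ is a zeroth-order (multiplication) operator, say multiplication by a rational function $c(x)$. Evaluating on $\phi$ then closes the factorization cleanly: on one hand $(\ell-\lambda)[\phi] = \ell[\phi] - \lambda\phi = 0$ because $\phi$ is an eigenfunction, and on the other hand $B\circ A[\phi] = B[0] = 0$; hence $c\,\phi \equiv 0$, and since $\phi\not\equiv 0$ we conclude $c\equiv 0$ and $\ell = B\circ A + \lambda$. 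The merit of this route is that it never requires expanding the zeroth-order term of $B\circ A$ directly, which is the messiest piece.

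With the factorization in hand the rest is formal. The partner $\widehat\ell := A\circ B + \lambda$ is a composition of two first-order operators with rational coefficients, hence second order with rational coefficients, and its leading coefficient is $b\,\widehat b = p$. The intertwining relations follow from associativity applied to $\ell - \lambda = B\circ A$: composing on the left by $A$ gives $A\circ(\ell-\lambda) = (A\circ B)\circ A = (\widehat\ell - \lambda)\circ A$, so $A\circ\ell = \widehat\ell\circ A$, and composing on the right by $B$ gives $\ell\circ B = B\circ\widehat\ell$ in the same way. I expect the main obstacle to be bookkeeping rather than ideas: because $w = \phi'/\phi$ has poles at the zeros of $\phi$, one must check that the coefficients of $B\circ A$ and of $\widehat\ell$ are genuinely rational and that these apparent singularities cancel. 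The factorization argument handles this for $B\circ A$ automatically, and for $\widehat\ell$ it reduces to reading off the explicit forms of $\widehat w$ and $\widehat b$ and confirming that no spurious poles survive.
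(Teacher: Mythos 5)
This ``proposition'' is imported verbatim from \cite[Proposition 3.5]{GGM} and the paper supplies no proof of it; moreover, as printed the statement is purely definitional --- it introduces $w$, $\widehat{b}$, $\widehat{w}$, $A$ and $B$ but asserts nothing, with the factorization $\ell = BA+\la_0$ and the partner $\cL = AB+\la_0$ only stated in the surrounding text, and the intertwining relations cited separately as Lemma \ref{l-intertwine}. So there is no in-paper argument to compare against; what you have done is supply a proof of the implied content, and your proof is correct. The route you take is the standard one and is efficient: the observation $A[\phi]=0$, the check that $\widehat{b}\,b = p$ matches leading coefficients, the check that $\widehat{w}$ is rigged so the first-order coefficients of $B\circ A$ and $\ell$ agree, and then the evaluation of the residual multiplication operator $c$ on $\phi$ to conclude $c\equiv 0$ without ever expanding the zeroth-order term. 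The intertwining relations do follow formally by composing $\ell-\lambda = B\circ A$ with $A$ on the left and $B$ on the right. Two small remarks. First, the displayed definition $\widehat{w}=-w-\tfrac{q}{p}+\tfrac{\widehat{b}}{b}$ cannot be what makes your first-order coefficients match: carrying out the computation, the $y'$-coefficient of $B\circ A$ is $p\tfrac{b'}{b}-pw-p\widehat{w}$, which equals $q$ precisely when $\widehat{w}=-w-\tfrac{q}{p}+\tfrac{b'}{b}$; the printed $\widehat{b}/b$ is a typo for $b'/b$ (this is consistent with the later formula $\cQ = q+p'-\tfrac{2pb'}{b}$ for the partner). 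Your proof implicitly uses the corrected formula, which is the right thing to do, but you should state it. Second, your worry about spurious poles is unnecessary for rationality itself: $w=\phi'/\phi$ is rational by the definition of a quasi-rational eigenfunction, so every coefficient of $B\circ A$ and of $\widehat{\ell}$ is manifestly a rational function; the pole-cancellation issue only matters later, for admissibility of the weight, and is handled by Assumption \ref{a-partitions}.
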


The operators $A$ and $B$ of Eq.~\eqref{e-operators} are used to form  a \textit{rational factorization} 
\begin{align*}
    \ell=BA+\la_0,
\end{align*} of $\ell$.  Given a rational factorization, the \textit{partner operator} $\cL$ is defined via
\begin{align*}
    \cL=AB+\la_0.
\end{align*} The operator $\cL$ will be a second--order operator of the form
\begin{equation}\label{e-XOPop}
    \cL[y](x)=p(x)y''(x)+\cQ(x)y'+\cR(x) y(x)
\end{equation} where $\cQ$ and $\cR$ are rational functions defined by
\begin{align*}
    \cQ&=q+p'-\frac{2pb'}{b},\\
    \cR&=-p(\widehat{w}'+\widehat{w}^2)-\cQ\widehat{w}\\
    &=r+q'+wp'+\frac{b'}{b}(q+p')+\left(2\left(\frac{b'}{b}\right)^2-\frac{b''}{b}+2w'\right)p,\quad \mbox{ and}\\
    \cW&=\frac{pW}{b^2}=\frac{P}{b^2}.
\end{align*}

When $\sigma$ and $\phi$ are suitably chosen and $\ell$ is a COP expression, the partner operator $\cL$ produces an XOP operator having weight function $\cW$.  The reader is directed to \cite[Proposition 2.5, Theorem 5.4]{GGM} for the suitability conditions on $\sigma$.  The polynomial part of $\phi$ is important for future calculations and is a generalized Laguerre polynomial in the context of Laguerre XOP families, see Eq.~\eqref{e-particularlaguerre}.

The transformation $\cA:=\ell\to \cL$ is a (rational) \textit{Darboux transformation}.  In fact, with appropriate assumptions, this process may be iterated $n \in \mathbb N$ times to produce a factorization chain or a Darboux transformation of size $n$.  These complex Darboux transformations are used to generate XOP systems.  We emphasize that factorizations of a COP are not unique; and herein provides the opportunity for many XOP families.

One crucial property that arises from this construction is the following:
\begin{lem}[Intertwining Property]{\cite[Proposition 3.2]{GGM}}\label{l-intertwine} 
Let $\ell$ be a COP expression such that $\ell[y]=\la y$ for some $\la\in\CC$ and $\cL$ be an XOP expression. If $\ell$ and $\cL$ are related via a rational Darboux transformation then the following intertwining relations hold:
\begin{align*}
A\ell=\cL A, \hspace{2em} \ell B=B\cL.
\end{align*}
\end{lem}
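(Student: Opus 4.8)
The plan is to read the two relations straight off the factorizations supplied by the construction. By hypothesis, $\ell$ and $\cL$ arise from a rational Darboux transformation, so we have the factorization $\ell = BA + \la_0$ together with the definition of the partner operator $\cL = AB + \la_0$, where $\la_0$ is a scalar and $A$, $B$ are the first-order operators of Eq.~\eqref{e-operators}. I would treat everything as an identity of (differential) operators, using only associativity of composition and the fact that the scalar $\la_0$ commutes with the linear operators $A$ and $B$.

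For the first relation, substitute the factorization into $A\ell$ and regroup:
\[
A\ell = A(BA + \la_0) = (AB)A + \la_0 A = (AB + \la_0)A = \cL A.
\]
For the second, proceed symmetrically:
\[
\ell B = (BA + \la_0)B = B(AB) + \la_0 B = B(AB + \la_0) = B\cL.
\]
The only point requiring care is that these are genuine operator identities, so one must note that composition of the first-order operators is associative and that pulling the scalar $\la_0$ across $A$ or $B$ is legitimate by linearity.

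The eigenvalue hypothesis $\ell[y] = \la y$ plays no role in establishing the identities themselves; it is what makes them useful. Indeed, once $A\ell = \cL A$ is known, applying both sides to such a $y$ gives $\cL(A[y]) = A(\ell[y]) = \la\, A[y]$, so $A$ carries eigenfunctions of $\ell$ to eigenfunctions of $\cL$ at the same eigenvalue (and symmetrically $B$ carries eigenfunctions of $\cL$ back to those of $\ell$), which is precisely the content exploited later.

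I would expect no genuine obstacle here: all the substance lives in the earlier construction guaranteeing that $\ell = BA + \la_0$ holds \emph{exactly} as second-order operators. Were one to prove the lemma from scratch without invoking that factorization, the only real work would be the direct computation of $BA$ from the formulas $A[y] = b(y' - wy)$ and $B[y] = \wh{b}(y' - \wh{w}y)$, matching the coefficients of $y''$, $y'$ and $y$ against those of $\ell$ to pin down $\la_0$ and verify the prescribed forms of $\wh{b}$ and $\wh{w}$; that coefficient-matching is routine but is the only place where care would be needed.
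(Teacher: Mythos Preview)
Your proposal is correct and is the standard argument: the intertwining relations follow immediately from the factorizations $\ell = BA + \la_0$ and $\cL = AB + \la_0$ by associativity and the fact that $\la_0$ is a scalar. The paper does not supply its own proof of this lemma but simply cites it from \cite[Proposition 3.2]{GGM}; your argument is precisely the one found there (and in essentially every treatment of Darboux transformations), so there is nothing to add.
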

In particular, if $y_\la$ is an eigenfunction of $\ell[y]=\lambda y$, then $\wt{y}_\la:=A[y_\la]$ satisfies $\cL[\wt{y}_\la]=\la\wt{y}_\la$.

Some final observations in regards to notation---to simplify our discussion, we will refer to all self-adjoint extensions of a symmetric operator as a ``family.'' XOP and COP families thus refer to families that contain one extension which possesses a set of XOPs or COPs. The notation $\ell$ and $\cL$ will often be used to denote the classical and exceptional symmetric differential expressions, respectively, that generate families.  Also, the general form of COP expressions is dependent upon a parameter.  For the Laguerre expression, this parameter is denoted by $\alpha$.  It follows that both $\ell$ and $\cL$ also have dependency upon this parameter.  When emphasis on the parameter is needed, we will use $\ell^\al$ and $\cL^\al$ for $\ell$ and $\cL$, but for ease of notation, the parameter will often be suppressed. Additionally, when relevant, a superscript of I, II or III may be included to indicate the XOP Laugerre type.

\subsection{Boundary Triples}\label{ss-bts}

The principal tools for extracting spectral information from self-adjoint extensions of symmetric differential operators in this manuscript are the Weyl $m$-functions that are generated by boundary triples and the parameterization of all self-adjoint extensions. For completeness, some basics from the theory of boundary triples are included here. The content of this subsection is also found in \cite{BdS} and \cite{BFL}, which the interested reader may consult for more details. For the purposes of this paper, we restrict ourselves to the simplified case of Sturm--Liouville differential operators rather than more general linear relations.  Additionally, we further narrow the scope of discussion as the XOP Laguerre differential expressions will have deficiency indices $(1,1)$, see Section \ref{s-mayaforxop}. 

Let $\cL\ti{min}$ denote an exceptional Laguerre operator on $\cH=L^2[(a,b),\;\cW(x)dx]$, $(a,b)\subset\RR\cup\{\pm\infty\}$, $\cW(x)>0$ a.e.~in $(a,b)$ and $\cW(x)\in L^1\ti{loc}[(a,b),dx]$. Denote the associated maximal operator $\cL\ti{max}$, domain $\cD\ti{max}$ and sesquilinear form $[\cdot,\cdot]\ci\cL$. 

The sesquilinear form associated to the expression $\cL$ can be explicitly calculated for $f,g\in\cD\ti{max}$ as
\begin{align}\label{e-xopsesqui}
[f,g]\ci\cL(x)=p(x)\left[f(x)g'(x)-f'(x)g(x)\right].
\end{align}

\begin{theo}[{\cite[Section 17.2]{N}}]\label{t-limits}
The limits $[f,g](b):=\lim_{x\to b^-}[f,g](x)$ and $[f,g](a):=\lim_{x\to a^+}[f,g](x)$ exist and are finite for $f,g\in\cD\ti{max}$.
\end{theo}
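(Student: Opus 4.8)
The plan is to obtain both limits at once from the integrated Lagrange identity (Green's formula), using the square-integrability that is built into the definition of the maximal domain. Recall that $f\in\cD\ti{max}$ means precisely that $f$ and $pf'$ are locally absolutely continuous on $(a,b)$, that $f\in\cH$, and that $\cL[f]\in\cH$. The first ingredient is the pointwise Lagrange identity: for $f,g\in\cD\ti{max}$ one has
\[
\frac{d}{dx}[f,g]\ci\cL(x)=\cW(x)\bigl(f(x)\,\cL[g](x)-g(x)\,\cL[f](x)\bigr),
\]
where the sign is dictated by the convention in \eqref{e-xopsesqui}. This follows by differentiating \eqref{e-xopsesqui} and integrating by parts, and the local absolute continuity of $f,\,pf',\,g,\,pg'$ is exactly what justifies these manipulations on every compact subinterval $[c,d]\subset(a,b)$. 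Integrating over $[c,d]$ yields Green's formula
\[
[f,g]\ci\cL(d)-[f,g]\ci\cL(c)=\int_c^d\bigl(f\,\cL[g]-g\,\cL[f]\bigr)\cW\,dx .
\]

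The second, and decisive, ingredient is that the right-hand integrand is absolutely integrable over the \emph{whole} interval. Since $f,g\in\cH=L^2[(a,b),\cW dx]$ and $\cL[f],\cL[g]\in\cH$ by the very definition of $\cD\ti{max}$, two applications of the Cauchy--Schwarz inequality give
\[
\int_a^b\bigl|g\,\cL[f]\bigr|\,\cW\,dx\le\|\cL[f]\|_\cH\,\|g\|_\cH<\infty,
\qquad
\int_a^b\bigl|f\,\cL[g]\bigr|\,\cW\,dx\le\|f\|_\cH\,\|\cL[g]\|_\cH<\infty .
\]
Hence the function $h:=\bigl(f\,\cL[g]-g\,\cL[f]\bigr)\cW$ belongs to $L^1(a,b)$.

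With these two facts the conclusion is immediate. Fix an interior reference point $x_0\in(a,b)$. Green's formula gives $[f,g]\ci\cL(x)=[f,g]\ci\cL(x_0)+\int_{x_0}^x h\,dt$ for every $x\in(a,b)$. Because $h\in L^1(a,b)$, the absolute continuity of the Lebesgue integral (equivalently, the Cauchy criterion applied to the tails) shows that $\int_{x_0}^x h\,dt$ converges to finite limits as $x\to b^-$ and as $x\to a^+$. Therefore $[f,g]\ci\cL(b)$ and $[f,g]\ci\cL(a)$ exist and are finite, as claimed. (Since the operator has real coefficients, $\cD\ti{max}$ is invariant under complex conjugation, so the same argument covers the sesquilinear form with $\overline{g}$ in place of $g$.)

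The only genuinely delicate point---and the reason the hypothesis $\cL[f]\in\cH$ cannot be dropped---is that local absolute continuity alone controls $[f,g]\ci\cL$ only on the interior of $(a,b)$; the finiteness of the boundary values is supplied \emph{entirely} by the $L^1$-integrability of $h$, which in turn rests on the $L^2$ memberships defining $\cD\ti{max}$. The verification of the Lagrange identity itself is routine, but I would take care to confirm that $\cW$ and the leading coefficient in \eqref{e-xopsesqui} are matched so that \eqref{e-xopsesqui} really is the boundary term associated with $\cL$; this is best checked against the Sturm--Liouville form $P(y')'+Ry=\la\cW y$ underlying the expression, which fixes both the identity and the sign above.
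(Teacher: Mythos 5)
Your proof is correct and is essentially the argument the paper is invoking: Theorem \ref{t-limits} is quoted from Naimark \cite[Section 17.2]{N}, where the existence of the boundary limits is established exactly by integrating the Lagrange identity and observing, via Cauchy--Schwarz, that $(f\,\cL[g]-g\,\cL[f])\cW\in L^1(a,b)$ because $f,g,\cL[f],\cL[g]$ all lie in $L^2[(a,b),\cW\,dx]$. Your closing caveat about matching the coefficient in \eqref{e-xopsesqui} with the Sturm--Liouville form $P(y')'+Ry=\la\cW y$ is well taken, since the $p$ appearing there must be read as the coefficient $P=pW$ of the symmetric form for the identity to close.
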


\begin{defn}{\cite[Definition 2.1.1]{BdS}}\label{d-bt}
A \textit{boundary triple} for $\cL\ti{max}$, denoted $\{\RR,\Gamma_0,\Gamma_1\}$, is composed of the boundary space $\RR$ and two surjective linear maps $\Gamma_0,\Gamma_1:\cD\ti{max}\to\RR$ for which the Green's identity 
\begin{align}\label{e-btgreens}
    \langle \cL\ti{max}f,g\rangle_{\cH}-\langle f,\cL\ti{max}g\rangle_{\cH}=\langle\Gamma_1f,\Gamma_0g\rangle_{\RR}-\langle\Gamma_0f,\Gamma_1g\rangle_{\RR},
\end{align}
holds for all $f,g\in \cD\ti{max}$.
\end{defn}
Notice that the left-hand side of Eq.~\eqref{e-btgreens} is simply $[\cdot,\cdot]\ci\cL$.

\begin{defn}{\cite[Definition 1.4.9]{BdS}}
Let $\zeta\in\CC$. The space
\begin{align*}
    \mathfrak{N}_{\zeta}(\cL\ti{max}):=\ker(\cL\ti{max}-\zeta),
\end{align*}
is called the {\it defect subspace} of $\cL\ti{min}$ at the point $\zeta\in\CC$.
\end{defn}
The usual positive and negative defect spaces are then simply $\mathfrak{N}_i(\cL\ti{max})$ and $\mathfrak{N}_{-i}(\cL\ti{max})$, respectively.  It should be clear that boundary triples are not typically unique. Indeed, given a self-adjoint extension $H$ of $\cL\ti{min}$, $\cL\ti{max}$ can be decomposed into a direct sum of $\dom(H)$ and a defect space. This allows maps $\Gamma_0$ and $\Gamma_1$ to be defined so that $\dom(H)=\ker(\Gamma_0)$ and $\{\CC,\Gamma_0,\Gamma_1\}$ form a boundary triple, see \cite[Theorem 2.4.1]{BdS}. 

In our context, the boundary triples for $\cL\ti{max}$ will be formed with quasi-derivatives.

\begin{defn}\label{d-quasideriv}
Let $u$ and $v$ be linearly independent real solutions of the equation $(\cL\ti{max}-\zeta_0)y=0$ for some $\zeta_0\in\RR$ and assume that the solutions are normalized by $[u,v]\ci\cL=1$. For $f\in\cD\ti{max}$, the quasi-derivatives of $f$ are induced by the normalized solutions $u$, $v$ and defined as complex functions on $(a,b)$ given by
\begin{align*}
f^{[0]}:=[f,v]_L \text{ and } f^{[1]}:=-[f,u]_L.
\end{align*}
\end{defn}

Note that in practice, the minus sign outside the sesquilinear form in the first quasi-derivative is absorbed into the definition of $u$. These quasi-derivatives naturally define two maps and self-adjoint extensions for exceptional Laguerre operators:
\begin{equation}\label{e-generalsetup}
\begin{aligned}
    \Gamma_0f&:=f^{[0]}(0), \hspace{3em} \cL_\infty:=\{f\in \cL\ti{max}:~f\in\Ker(\Gamma_0)\}, \\
    \Gamma_1f&:=f^{[1]}(0), \hspace{3em} \cL_0:=\{f\in \cL\ti{max}:~f\in\Ker(\Gamma_1)\}.
\end{aligned}
\end{equation}
Recall that for all $f\in\cD\ti{max}$ the quasi-derivatives $f^{[0]}(0)$ and $f^{[1]}(0)$ are well-defined due to Theorem \ref{t-limits}. All self-adjoint extensions of $\cL\ti{min}$ are in one-to-one correspondence with 
\begin{align}\label{e-ltau}
    \cL_\tau:=\{f\in \cL\ti{max}:~\tau\Gamma_0f=\Gamma_1f\},
\end{align}
where $\tau\in\RR\cup\{\infty\}$. The case $\tau=\infty$ is interpreted as representing the operator $\cL_\infty$; this case technically represents the linear relation $\{0,\RR\}$. The only other one-dimensional linear relation that isn't an operator, $\{\RR,0\}$, clearly corresponds to $L_0$. The definitions can be made rigorous by appealing to the corresponding semi-bounded forms of the operators; interested readers should consult \cite[Remark 3.7]{BFL}. The above definitions are adequate for our purposes. 

Also note that the extension $\cL_0$ is identified as the Friedrichs extension when $u$ is the principal solution by \cite{MZ}: the extension whose semi-bounded form has the greatest lower bound. On the other hand, if $\cL\ti{max}$ is positive and $v$ is a non-principal solution, $\cL_\infty$ is the Krein--von Neumann extension, see \cite[Definition 5.4.2]{BdS}. This structure immediately allows for the definition of a Weyl $m$-function.

\begin{defn}{\cite[Definition 2.3.1, 2.3.4]{BdS}}\label{d-mfunction}
Let $\{\RR,\Gamma_0,\Gamma_1\}$ be a boundary triple for $\cL\ti{max}$ and $\la\in\CC$. Then
\begin{equation*}
    \rho(\cL_\infty)\ni\la\mapsto M_\infty(\la)=\Gamma_1\left(\Gamma_0\upharpoonright\mathfrak{N}_{\la}(\cL\ti{max})\right)^{-1},
\end{equation*}
where $\upharpoonright$ denotes the restriction, is called the \textit{Weyl $m$-function} associated with the boundary triple $\{\RR,\Gamma_0,\Gamma_1\}$.
\end{defn}

In this context, the spectrum of $\cL_{\infty}$ is discrete and the difference of the resolvents of $\cL_{\infty}$ and $\cL_\tau$ is an operator of rank one. Thus the spectrum of the self-adjoint operator $\cL_\tau$ is also discrete. Indeed, $\la\in\rho(\cL_\infty)$ is an eigenvalue of $\cL_\tau$ if and only if $\ker (\tau-M_\infty(\la))$ is nontrivial. For $\la\in\rho(\cL_0)\cap\rho(\cL_\infty)$, the spectral properties of $\cL_\tau$ can thus be described with the help of the function 
\begin{align}\label{e-thetam}
    M_\tau(\la)=(\tau-M_\infty(\la))^{-1},
\end{align}
 see \cite[Equation (3.8.7)]{BdS}. The poles of the function \eqref{e-thetam} coincide with the discrete spectrum of $L_\tau$ and the dimension of the eigenspace $\ker(L_\tau-\la)$ coincides with the dimension of the range of the residue of the function \eqref{e-thetam} at $\la$. The relationship $M_0=-M_\infty^{-1}$ is immediately apparent. Note that $M_0(\la)$ can be obtained without exploiting Eq.~\eqref{e-thetam} by simply switching the definitions of the maps $\Gamma_0$ and $\Gamma_1$. A similar construction that the reader may find useful for more general Sturm--Liouville operators is available in \cite{BFL, F}.

Now, fix a fundamental system $(u_1(\cdot,\la);~u_2(\cdot,\la))$ for the equation $(\cL\ti{max}-\la)f=0$ by the initial conditions
\begin{align}\label{e-initialconditions}
    \left( \begin{array}{cc}
u_1^{[0]}(0,\la) &  u_2^{[0]}(0,\la) \\
u_1^{[1]}(0,\la) & u_2^{[1]}(0,\la)
\end{array} \right)
=
    \left( \begin{array}{cc}
1 & 0 \\
0 & 1
\end{array} \right).
\end{align}

\begin{prop}{\cite[Proposition 6.4.9]{BdS}}\label{p-btsetup}
Let $\Gamma_0$ and $\Gamma_1$ be defined as in Eq.~\eqref{e-generalsetup}. Then $\{\RR,\Gamma_0,\Gamma_1\}$ is a boundary triple for $\cD\ti{max}$. Moreover, if $\la\in\CC\backslash\RR$ and $\chi(x,\la)$ is a nontrivial element in $\mathfrak{N}_\la(\cL\ti{max})$, then $\chi^{[0]}(0,\la)\neq0$ and the Weyl $m$-function is given by
\begin{align*}
    M_\infty(\la)=\frac{\chi^{[1]}(0,\la)}{\chi^{[0]}(0,\la)}.
\end{align*}
\end{prop}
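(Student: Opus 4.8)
The plan is to verify directly the two defining properties of a boundary triple from Definition \ref{d-bt}---namely Green's identity \eqref{e-btgreens} and surjectivity of the boundary maps---and then to read the $m$-function off Definition \ref{d-mfunction}. The structural input that drives everything is the limit-circle/limit-point dichotomy: $x=0$ is limit-circle while the opposite endpoint $b$ is limit-point, so that $\cL\ti{min}$ has deficiency indices $(1,1)$ and a single boundary condition at $0$ pins down each self-adjoint extension.

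First I would establish Green's identity. By the Lagrange identity together with Theorem \ref{t-limits}, for $f,g\in\cD\ti{max}$ the left-hand side of \eqref{e-btgreens} reduces to the boundary form $[f,g]\ci\cL$ evaluated across the endpoints, and since $b$ is limit-point this form vanishes there, leaving only the value at $x=0$. To match the right-hand side of \eqref{e-btgreens}, I would apply the Pl\"ucker (Lagrange-bracket) identity
\[
[f,g]\ci\cL\,[u,v]\ci\cL=[f,u]\ci\cL\,[g,v]\ci\cL-[f,v]\ci\cL\,[g,u]\ci\cL
\]
to $f$ and $\overline{g}$, using that $u,v$ are real solutions with $[u,v]\ci\cL=1$, together with $f^{[0]}=[f,v]\ci\cL$ and $f^{[1]}=-[f,u]\ci\cL$. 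This rewrites the value at $0$ as $f^{[1]}(0)\,\overline{g^{[0]}(0)}-f^{[0]}(0)\,\overline{g^{[1]}(0)}=\langle\Gamma_1 f,\Gamma_0 g\rangle_\RR-\langle\Gamma_0 f,\Gamma_1 g\rangle_\RR$, the overall sign being fixed by the convention on $u$ noted after Definition \ref{d-quasideriv}.

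For surjectivity I would exhibit explicit preimages. A direct computation gives $u^{[0]}=[u,v]\ci\cL=1$, $u^{[1]}=-[u,u]\ci\cL=0$, $v^{[0]}=[v,v]\ci\cL=0$ and $v^{[1]}=-[v,u]\ci\cL=1$, so $u$ and $v$ realize the values $(1,0)$ and $(0,1)$ under $(\Gamma_0,\Gamma_1)$. Since $x=0$ is limit-circle, both $u$ and $v$ lie in $L^2$ near $0$; multiplying by a smooth cutoff $\eta$ equal to $1$ near $0$ and vanishing near $b$ produces $\eta u,\eta v\in\cD\ti{max}$ whose quasi-derivatives at $0$ are unchanged, and their linear span surjects onto the boundary space. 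This completes the verification that $\{\RR,\Gamma_0,\Gamma_1\}$ is a boundary triple.

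Finally, for the $m$-function, fix $\la\in\CC\setminus\RR$ and nontrivial $\chi\in\mathfrak{N}_\la(\cL\ti{max})$. If $\chi^{[0]}(0,\la)=\Gamma_0\chi=0$ then $\chi\in\ker\Gamma_0=\dom(\cL_\infty)$, making $\chi$ an eigenfunction of the self-adjoint operator $\cL_\infty$ with non-real eigenvalue $\la$, which is impossible unless $\chi=0$; hence $\chi^{[0]}(0,\la)\neq0$. As $\mathfrak{N}_\la(\cL\ti{max})$ is one-dimensional, the restriction $\Gamma_0\upharpoonright\mathfrak{N}_\la(\cL\ti{max})$ is a bijection onto the boundary space with inverse sending $1\mapsto\chi/\chi^{[0]}(0,\la)$, and applying $\Gamma_1$ gives
\[
M_\infty(\la)=\Gamma_1\!\left(\frac{\chi}{\chi^{[0]}(0,\la)}\right)=\frac{\chi^{[1]}(0,\la)}{\chi^{[0]}(0,\la)}.
\]
The main obstacle is the careful sign and conjugation bookkeeping in the Pl\"ucker step needed to reproduce \eqref{e-btgreens} exactly, and confirming the cutoff functions genuinely lie in $\cD\ti{max}$; both rest on the limit-point classification at $b$, which is the substantive analytic input supplied by the earlier Frobenius analysis.
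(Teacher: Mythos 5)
Your proof is correct. Note, however, that the paper offers no proof of Proposition \ref{p-btsetup} at all: the statement is imported wholesale from \cite[Proposition 6.4.9]{BdS}, and your argument --- Lagrange identity plus Theorem \ref{t-limits} and the vanishing of the boundary form at the limit-point endpoint, the Pl\"ucker relation with $[u,v]\ci\cL=1$ to produce the right-hand side of Eq.~\eqref{e-btgreens}, cutoffs of $u$ and $v$ to witness surjectivity, and the self-adjointness of $\cL_\infty$ to rule out $\chi^{[0]}(0,\la)=0$ before reading off $M_\infty$ from Definition \ref{d-mfunction} --- is precisely the standard one given in that reference. The only point worth an explicit line is the conjugation bookkeeping you flag yourself: the form in Eq.~\eqref{e-xopsesqui} is written without complex conjugates, so the Pl\"ucker step must indeed be applied to $f$ and $\overline{g}$, with the residual sign absorbed by the convention on $u$ stated after Definition \ref{d-quasideriv}.
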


It is then possible to transform the boundary triple so that $M_\tau(\la)$ naturally emerges from Proposition \ref{p-btsetup} by using \cite[Equation 6.4.8]{BdS}:
\begin{align}\label{e-generalbt}
M_\tau(\la)=\frac{1+\tau M_\infty(\la)}{\tau-M_\infty(\la)}, \hspace{2em}\la\in\CC\backslash\RR.
\end{align}

This means that a corresponding Weyl $m$-function can be accessed for each self-adjoint extension in the XOP family, and a spectral analysis can be completed by finding the poles of these $m$-functions. 

\subsection{Maya Diagrams}\label{ss-diagrams}

Maya diagrams will be used to indicate the indices of seed functions within the Wronskians that define Darboux transformations. These Maya diagrams are vitally important to the construction of XOPs and will be manipulated throughout the manuscript.  We include a brief overview here that mostly follows \cite{BK}. More information on Maya diagrams, Young diagrams and partitions relating to XOP families may be found in \cite{BK, GGM2}.

A \textit{Maya diagram} $M$ is a infinite subset of integers that contains finitely many non-negative integers and excludes finitely many negative integers. Visually, imagine an infinite row of boxes with a fixed origin. These boxes are either filled (the integer belongs to $M$) or empty (the integer does not belong to $M$).  Thus, $M$ is characterized by two finite sequences of integers: the positive integers included in $M$ and the negative integers excluded by $M$.

Each of the non-negative integers $n_i$ in $M$ are labeled to make a decreasing sequence by 
\begin{align*}
    n_1>n_2>\dots>n_{r_1}\geq 0,
\end{align*}
where $r_1$ is the number of filled boxes to the right of the origin.  If no boxes are filled to the right of the origin, the sequence is empty and $r_1=0$. Likewise, the number of empty boxes to the left of the origin can be denoted as $r_4$, the negative integers $k$ corresponding to these boxes assigned values via $n'=-k-1$ and
\begin{align*}
    n_1'>n_2'>\dots>n_{r_4}'\geq 0.
\end{align*}
The Maya diagram can then be succinctly described by these two sequences as
\begin{align*}
    M=(n_1',n_2',\dots,n_{r_4}'~|~n_1,n_2,\dots,n_{r_1}).
\end{align*}

As an example, let $M$ be the  Maya diagram $M=(5,2,1~|~4,3,1)$, whose graphical representation may be seen in Figure \ref{f-exMaya}.
	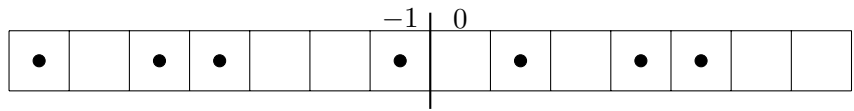
\begin{figure}[H]\label{f-exMaya}\caption{Maya Diagram }
		\begin{tikzpicture}[scale=.8]
			\draw (0,0) grid(14,1);
			\draw [fill] (.5,0.5) circle [radius=0.1];
			\draw [fill] (2.5,0.5) circle [radius=0.1];
			\draw [fill] (3.5,0.5) circle [radius=0.1];
			\draw [fill] (6.5,0.5) circle [radius=0.1];
			\draw [fill] (8.5,0.5) circle [radius=0.1];
			\draw [fill] (10.5,0.5) circle [radius=0.1];
			\draw [fill] (11.5,0.5) circle [radius=0.1];
			\draw [thick] (7,-.3)--(7,1.3);
			\node at (6.5,1.2) {$-1$};
			\node at (7.5,1.2) {$0$};
			\node at (6.5, -1){$M=(5,2,1~|~4,3,1)$};
		\end{tikzpicture}\hfill
	\end{figure}
All boxes to the left of those shown in Figure \ref{f-exMaya} are filled, while those to the right are empty. The origin of a Maya diagram may be shifted to the left or right by adding or subtracting, respectively. The Maya diagram $\wt{M}=M+3$ is represented by Figure \ref{f-exShiftMaya}.
\begin{figure}[H]\label{f-exShiftMaya}\caption{Shifted Maya Diagram}
		\begin{tikzpicture}[scale=.8]
			\draw (0,0) grid(14,1);
			\draw [fill] (.5,0.5) circle [radius=0.1];
			\draw [fill] (2.5,0.5) circle [radius=0.1];
			\draw [fill] (3.5,0.5) circle [radius=0.1];
			\draw [fill] (6.5,0.5) circle [radius=0.1];
			\draw [fill] (8.5,0.5) circle [radius=0.1];
			\draw [fill] (10.5,0.5) circle [radius=0.1];
			\draw [fill] (11.5,0.5) circle [radius=0.1];
			\draw [thick] (4,-.3)--(4,1.3);
			\node at (3.5,1.2) {$-1$};
			\node at (4.5,1.2) {$0$};
			\node at (6.5, -1){$\wt{M}=(2~|~6,5,3,1)$};
		\end{tikzpicture}\hfill
	\end{figure}
In particular, the length of the two subsequences, $r_1+r_4$, is not necessarily stable under such translations: $M$ has $6$ indices and $\wt{M}$ has $5$. 

A Maya diagram is in \textit{canonical form} if there are no empty boxes to the left of the origin, but the first box to the right is empty. This form is useful for comparing diagrams and can always be obtained in the current context by applying a shift. If $M+t$ puts a Maya diagram $M$ into canonical form for some $t\in \mathbb Z$, we denote this shift as $t=t(M)$.  In the context of the above example, $t(M)=6$, or $n_1'+1$.

Alternatively, the \textit{conjugate canonical form} of a Maya diagram is the position when there are no filled boxes to the right of the origin, but the first box to the left is filled. The required shift to put $M$ into conjugate canonical form is denoted by $t'=t'(M)$. In the above example, $t'(M)=-(n_1+1)=-5$ and we have the associated representation in Figure \ref{f-exCCongMaya}.
\begin{figure}[H]\label{f-exCCongMaya} \caption{Conjugate Canonical Form Maya Diagram }
		\begin{tikzpicture}[scale=.8]
			\draw (0,0) grid(14,1);
			\draw [fill] (.5,0.5) circle [radius=0.1];
			\draw [fill] (2.5,0.5) circle [radius=0.1];
			\draw [fill] (3.5,0.5) circle [radius=0.1];
			\draw [fill] (6.5,0.5) circle [radius=0.1];
			\draw [fill] (8.5,0.5) circle [radius=0.1];
			\draw [fill] (10.5,0.5) circle [radius=0.1];
			\draw [fill] (11.5,0.5) circle [radius=0.1];
			\draw [thick] (12,-.3)--(12,1.3);
			\node at (11.5,1.2) {$-1$};
			\node at (12.5,1.2) {$0$};
			\node at (6.5, -1){$M-5=(10,7,6,4,2~|~\emptyset)$};
		\end{tikzpicture}\hfill
	\end{figure}
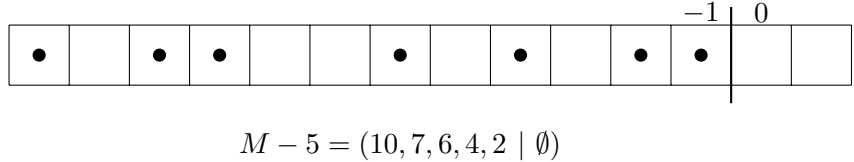

One of the goals of Section \ref{s-manipulations} will be to manipulate the Maya diagrams $M_1$ and $M_2$ of Eqs.~\eqref{e-generalizedmaya1} and \eqref{e-generalizedmaya2} so that they are either in canonical or conjugate canonical position and can therefore each be described by a single finite decreasing sequence of non-negative integers. Naturally, such sequences describe a partition, see Eqs.~\eqref{e-canonicalpartition1} and \eqref{e-canonicalpartition2}. Hence, any given Maya diagram gives rise to a partition, when shifted into canonical form, and a conjugate partition, when shifted into conjugate canonical form.

\section{Motivating Example}\label{s-type1} 
We begin with an example: the Type I Laguerre exceptional family. The spectral properties of the extension $\cL_0$ are already known from \cite{LLMS}, but the authors find it instructive to present the construction of the boundary triple and $m$-functions in this specific context before moving on to the general setting. 

The classical Laguerre differential expression $\ell^{\al}$ acts via
\begin{align}\label{e-laguerre}
    \ell^{\al}[f](x):=-xf''(x)+(x-\al-1)f'(x)\,.  
\end{align} For fixed $m\in \NN$, $\ell^{\al}[f](x)$ has the rational factorization
\begin{align}\label{e-laguerrefac}
-\ell^{\al}&=B^{I,\al}_m\circ A^{I,\al}_m+\al+m+1,
\end{align}
where
\begin{align}\label{e-Adarboux}
    A^{I,\al}_m[f]&:=L_m^{\al}(-x)f'(x)-L_m^{\al+1}(-x)f(x),\\ B^{I,\al}_m[f]&:=\frac{xf'(x)+(1+\al)f(x)}{L^{\al}_m(-x)},
\end{align} and $L_m^\al$ is the classical Laguerre polynomial of degree $m$.  The rational factorization in Eq.~\eqref{e-laguerrefac} is used to defined the differential expression $\cL^{I,\al}_m$ for the Type I Laguerre XOP operator
\begin{equation}\label{e-ratfact1}
    \mathcal L^{I,\al}_m=-(A^{I,\al-1}_m\circ B^{I,\al-1}_m+\al+m).
\end{equation} In this case, $\cL^{I,\al}_m[y]=\lambda y$ contains no polynomial solutions of degrees $0, 1,\ldots, m-1$.  The weight function associated with the XOP family is 
\begin{equation}
     \cW_m^{I,\al}:=\frac{x^{\al}e^{-x}}{\left[L_m^{\al-1}(-x)\right]^2}, 
\end{equation} and the associated maximal domain in the Hilbert space $L^2\left[(0,\infty),\cW^{I,\al}_m\right]$ is defined as
\begin{align*}
    \cD_{\ti{max}}:=\big\{f:(0,\infty)\to\CC\big|f,f'&\in AC\ti{loc}(0,\infty); f,\cL^{I,\al}_m[f]\in L^2\left[(0,\infty),W^{I,\al}_m\right]\big\}.
\end{align*}
For $f,g\in\cD_{\ti{max}}$ and $x\in(0,\infty)$, the sesquilinear form is given by 
\begin{align*}
    [f,g]\ci\cL(x):=\frac{x^{\al+1}e^{-x}}{[L_m^{\al-1}(-x)]^2}\left[f(x)g'(x)-f'(x)g(x)\right].
\end{align*}
Frobenius analysis shows the endpoint $x=0$ is limit-circle for $\cL^{I,\al}_m$ when $\al\in(0,1)$; and the endpoint $x=\infty$ is limit-point for $\al>0$. Observe that the functions 
\begin{align*}
\wt{y}_1:=-\frac{[L_m^{\al-1}(0)]^2}{\al} \hspace{.5em}\text{ and }\hspace{.5em} \wt{y}_2:=x^{-\al},
\end{align*}
are both in $L^2\left[(0,\infty),\cW^{I,\al}_m\right]$ and are particular solutions to $\cL^{I,\al}_m f=0$. Together they define maps using the quasi-derivatives:
\begin{align*}
    \Gamma_0f&:=f^{[0]}(0)=[f,\wt{y}_2]\ci\cL(0)=\lim_{x\to0^+}-\frac{\al f(x)+xf'(x)}{[L_m^{\al-1}(0)]^2}
\end{align*}
and \begin{align*}
    \Gamma_1f&:=f^{[1]}(0)=[f,\wt{y}_1]\ci\cL(0)=\lim_{x\to0^+}\frac{x^{\al+1}}{\al}f'(x).
\end{align*}
Using results of Subsection \ref{ss-bts}, it is easy to show that $\{\Gamma_0,\Gamma_1,\CC\}$ is a boundary triple for $\cD_{\ti{max}}$. Note that the choice of $\wt{y}_1$ immediately yields $[\f,x^{-\al}]\ci\cL(0)=1$. The boundary triple naturally defines two self-adjoint extensions $\cL_0$ and $\cL_\infty$ (suppressing the dependence on $m$ and $\al$) that act via $\cL_m^{I,\al}$ on the domains
\begin{equation}\label{e-specialexts}
\begin{aligned}
\dom(\cL_0)&=\left\{f\in\Delta_m^{I,\al}~:~\Gamma_1(f)=0\right\}, \text{ and} \\
\dom(\cL_\infty)&=\left\{f\in\Delta_m^{I,\al}~:~\Gamma_0(f)=0\right\},
\end{aligned}
\end{equation}
respectively. 

The two linearly independent solutions of the Laguerre differential equation $\ell^{\al-1}$ are given by confluent hypergeometric functions:
\begin{align}\label{e-FS1}
    M(-\la,\al,x)= \,_1F_1(-\la,\al,x)\hspace{2em}(\text{for }\al+1\notin-\NN_0), \\\label{e-FS2}
    x^{-\al+1}M(1-\la-\al,2-\al,x)\hspace{2em}(\text{for }\al+1\notin\NN>1).
\end{align}
As a consequence of Lemma \ref{l-intertwine}, the Type I Laguerre XOP expression is therefore intertwined with the classical Laguerre expression $\ell^{\al-1}$. Therefore a fundamental system for the equation $(\cL^{I,\al}_m-\la)f=0$ can be found by plugging the solutions of Eqs.~\eqref{e-FS1} and \eqref{e-FS2} into the Darboux transformation $-A^{I,\al-1}$ of Eq.~\eqref{e-Adarboux}.  Thus
\begin{align*}
    \Omega_{1,m}^{\al}(x,\la)&:=-A^{I,\al-1}[M(-\la,\al,x)] \\   &=\frac{\Gamma(m+\al)}{\Gamma(m+1)\Gamma(\al)}\Big[\frac{\al+m}{\al}M(-m,\al+1,-x)M(-\la,\al,x) \\
    & \hspace{5em} +\frac{\la}{\al}M(-m,\al,-x)M(-\la+1,\al+1,x)\Big],
\end{align*}
and
\begin{align*}
    \Omega_{2,m}^{\al}(x,\la)&:=-A^{I,\al-1}[x^{-\al+1}M(1-\la-\al,2-\al,x)]\\
    &=\frac{\Gamma(m+\al+1)}{\Gamma(m+1)\Gamma(\al+1)}x^{-\al} \\
    &\hspace{3em}\times\Bigg[\frac{\al(1-\al)}{m+\al}M(-m,\al,-x)M(1-\la-\al,1-\al,x) \\
    & \hspace{6em}+xM(-m,\al+1,-x)M(1-\la-\al,2-\al,x)\Bigg]
\end{align*}
are two linearly independent solutions of the XOP expression $(\cL^{I,\al}_m-\la)f=0$. Furthermore, a straightforward calculation shows
\begin{align}\label{e-iclaguerre}
    \left( \begin{array}{cc}
 \left(\Omega_{1,m}^{\al}\right)^{[0]}(0,\la) &  \left(\Omega_{2,m}^{\al}\right)^{[0]}(0,\la) \\
 \left(\Omega_{1,m}^{\al}\right)^{[1]}(0,\la) & \left(\Omega_{2,m}^{\al}\right)^{[1]}(0,\la) 
\end{array} \right)
=
    \left( \begin{array}{cc}
1/C^{\al}_m & 0 \\
0 & 1/D^{\al}_m
\end{array} \right), 
\end{align}
where 
\begin{align*}
    C^{\al}_m(\la)&:=-\frac{\Gamma(m+\al)}{(\la+m+\al)\Gamma(\al)\Gamma(m+1)} \hspace{.5em}\text{ and }\hspace{.5em} D^{\al}_m(\la):=-\frac{\Gamma(m+1)\Gamma(\al)}{(1-\al)\Gamma(m+\al)}.
\end{align*}

The fundamental system composed of $u_1(x,\la)$ and $u_2(x,\la)$ must satisfy both $(\cL^{I,\al}_m-\la)f=0$ and the initial conditions
\begin{align*}
    \left( \begin{array}{cc}
 u_1^{[0]}(0,\la) &  u_2^{[0]}(0,\la) \\
 u_1^{[1]}(0,\la) & u_2^{[1]}(0,\la)
\end{array} \right)
=
    \left( \begin{array}{cc}
1 & 0 \\
0 & 1
\end{array} \right).
\end{align*}
If we suppress the dependence on $\al$ and $m$, this can be accomplished by setting $u_1(x,\la):=C^{\al}_m(\la)\Omega_{1,m}^{\al}(x,\la)$ and $u_2(x,\la)=D^{\al}_m(\la)\Omega_{2,m}^{\al}(x,\la)$.

It remains only to compute the explicit Weyl $m$-function and extract spectral information for each self-adjoint extension. A general deficiency element satisfying $(\ell^{\al-1}-\la)f=0$ and denoted by $\chi(x,\la)$ may be written via the Tricomi confluent hypergeometric function \cite[Eq.~(13.2.42)]{DLMF} as
\begin{align*}
U(-\la,\al,x)=\frac{\Gamma(1-\al)}{\Gamma(1-\al-\la)}M(-\la,\al,x)+\frac{\Gamma(\al-1)}{\Gamma(-\la)}x^{-\al+1}M(1-\la-\al,2-\al,x).
\end{align*}
The intertwining property in Lemma \ref{l-intertwine} implies that a general deficiency element $\wt{\chi}$ of $(\cL^{I,\al}_m-\la)f=0$ can be written as
\begin{align*}
\wt{\chi}(z,\la)&=-A^{I,\al-1}[U(-\la,\al,x)]\\
& =\frac{\Gamma(1-\al)}{\Gamma(1-\al-\la)}\left(-A^{I,\al-1}[M(-\la,\al,x)]\right) \\
& \hspace{8em}+\frac{\Gamma(\al-1)}{\Gamma(-\la)}\left(-A^{I,\al-1}[x^{-\al+1}M(1-\la-\al,2-\al,x)]\right)\\
&=\frac{\Gamma(1-\al)}{\Gamma(1-\al-\la)}\Omega_{1,m}^{\al}(x,\la)+\frac{\Gamma(\al-1)}{\Gamma(-\la)}\Omega_{2,m}^{\al}(x,\la)\\
&=\frac{\Gamma(1-\al)}{C^{\al}_m(\la)\Gamma(1-\al-\la)}u_1(x,\la)+\frac{\Gamma(\al-1)}{D^{\al}_m\Gamma(-\la)}u_2(x,\la).
\end{align*}
Hence, the initial conditions mean
\begin{align*}
    \chi\ci\cL^{[0]}(0,\la)=\frac{\Gamma(1-\al)}{C^{\al}_m(\la)\Gamma(1-\al-\la)} \hspace{.5 in}\text{ and }\hspace{.5 in}
    \chi\ci\cL^{[1]}(0,\la)=\frac{\Gamma(\al-1)}{D^{\al}_m\Gamma(-\la)}.
\end{align*}
Proposition \ref{p-btsetup} implies that if $\la\in\rho(\cL_\infty)$ the Weyl $m$-function for the extension $L_\infty$ is given by 
\begin{align*}
M_\infty(\la)&=\frac{\chi\ci\cL^{[1]}(0,\la)}{\chi\ci\cL^{[0]}(0,\la)}=\left(\frac{\Gamma(m+\al)}{\Gamma(m+1)\Gamma(\al)}\right)^2\frac{(1-\al)\Gamma(\al-1)\Gamma(1-\al-\la)}{(\la+m+\al)\Gamma(-\la)\Gamma(1-\al)}\\
&=-\left[L_m^{\al-1}(0)\right]^2\frac{\Gamma(\al)\Gamma(1-\al-\la)}{(\la+m+\al)\Gamma(-\la)\Gamma(1-\al)}.
\end{align*}
The spectrum of the self-adjoint operator $\cL_\infty$ are those points which are poles of $M_\infty(\la)$. Recall that the Gamma function has no zeros, but does have simple poles at zero and the negative integers.  Therefore, $M_\infty$ has poles at $\sigma(\cL_\infty)=(-m-\al)\cup \left\{n+1-\al\right\}_{n\in\NN_0}$ and  $\sigma(\cL_\infty)$ is the spectrum of the self-adjoint operator $\cL_\infty$.  The corresponding self-adjoint extension of the COP expression has the same spectrum with the point $-m-\al$ replaced by $-\al$. 

Likewise, the $m$-function of $\cL_0$, denoted $M_0$, can be found by computing $-1/M_\infty(\la)$ for $\la\in\rho(\cL_\infty)\cup\rho(\cL_0)$. This extension contains the Type I Laguerre XOP polynomials and is easily seen to have eigenvalues $\la=\left\{n\right\}_{n\in\NN_0}$. 

The $m$-function for any self-adjoint extension can also be written down in a standard way. Let $\tau\in\RR\cup\{\infty\}$ and $\cL_\tau$ refer to the self-adjoint operator acting via $\cL^{I,\al}_m$ on 
\begin{align*}
    \dom(\cL_\tau)&:=\left\{f\in\cD_{\ti{max}}\big|f\in\ker\left(\Gamma_1-\tau\Gamma_0\right)\right\}=\left\{f\in\cD_{\ti{max}}\big|f^{[1]}(0)=\tau f^{[0]}(0)\right\}.
\end{align*}
The corresponding Weyl $m$-function for $\la\in\rho(\cL_\infty)\cup\rho(\cL_\tau)$, as in Eq.~\eqref{e-generalbt}, is
\begin{align*}
M_\tau(\la)&=\frac{1+\tau M_\infty(\la)}{\tau-M_\infty(\la)} \\
&=\frac{(\la+m+\al)\Gamma(-\la)\Gamma(1-\al)-\tau\left[L_m^{\al-1}(0)\right]^2\Gamma(\al)\Gamma(1-\al-\la)}{\tau(\la+m+\al)\Gamma(-\la)\Gamma(1-\al)+\left[L_m^{\al-1}(0)\right]^2\Gamma(\al)\Gamma(1-\al-\la)}.
\end{align*}
Hence, if $\tau\notin\{0\}\cup\{\infty\}$, $\cL_\tau$ will have eigenvalues precisely when 
\begin{align*}
    \tau(\la+m+\al)\Gamma(-\la)\Gamma(1-\al)=-\left[L_m^{\al-1}(0)\right]^2\Gamma(\al)\Gamma(1-\al-\la).
\end{align*}
Solving for $\tau$ shows that eigenvalues of $\cL_\tau$ are just the level curves where $M_\infty(\la)=\tau$. Our example is now complete.

There are two main obstacles to generalizing the example that the reader should keep in mind. The first is the manipulation of the general solutions to the XOP expression so that they are in a practical format.  This step is not necessary in the above example as all calculations are explicit. The second is the determination of the normalizations for initial conditions; this is found by a simple calculation in the example and stated in Eq.~\eqref{e-iclaguerre}. In the general case, these obstacles are the main subject of Sections \ref{s-manipulations} and \ref{s-normalizations}, respectively.

\section{Exceptional Laguerre Operators}\label{s-mayaforxop}

We begin by discussing the behavior of solutions near the singular endpoints $x=0$ and $x=\infty$ for the XOP Laguerre expression found in Eq.~\eqref{e-XOPop}.  
\begin{theo}\label{t-frob}
For $\al>0$, let $\cL^\al$ be the XOP differential expression on the interval $(0,\infty)$.
\begin{enumerate}
    \item[(a.)] $\cL^\al$ is in the limit-circle case at $x=0$ for $-1<\al<1$ and is in the limit-point case at $x=0$ when $\alpha\geq 1$.
    \item[(b.)] $\cL^\al$ is in the limit-point case at $x=\infty$ for any $\al>0$.
\end{enumerate}
\end{theo}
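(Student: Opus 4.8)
The plan is to classify each endpoint by the Weyl alternative: the endpoint is limit-circle precisely when both solutions of $(\cL^\al-\la)y=0$ lie in $L^2[(0,\infty),\cW\,dx]$ near that endpoint, and limit-point when only a one-dimensional space does. Since the deficiency type is independent of $\la$, I would fix a convenient $\la\in\RR$ and build a fundamental system near each singular point by transporting the two confluent-hypergeometric solutions of the associated classical Laguerre equation through the Darboux map, exactly as in the Type I computation of Section \ref{s-type1}. By Lemma \ref{l-intertwine}, if $y_\la$ solves $(\ell-\la)y=0$ then $A[y_\la]$ solves $(\cL^\al-\la)y=0$; for $\la$ avoiding the finitely many deleted eigenvalues the map $A$ (respectively the $n$-fold composition in the multi-step case) is injective on the two-dimensional classical solution space, so its image is a full fundamental system for $\cL^\al$.

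For part (a) I would carry out a Frobenius analysis at $x=0$. The leading coefficient is $p(x)=-x$, unchanged by the Darboux transformation, so $x=0$ is a regular singular point once one checks that $\cQ$ and $x\cR$ from Eq.~\eqref{e-XOPop} are analytic there; this is exactly where the standing assumptions on the seed functions enter, since they force the Wronskian defining the transformation to be analytic and nonvanishing at $x=0$, whence the rational correction to the potential is regular at the origin. Under this regularity the indicial roots are those of the classical expression shifted through $A$, which lowers the nontrivial exponent by one while fixing the trivial one, namely $0$ and $-\al$; thus a fundamental system near $0$ has the form $y_1(x)\sim 1$ and $y_2(x)\sim x^{-\al}$, with at most a logarithmic correction when $\al\in\NN$. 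Because $\cW(x)\sim c\,x^\al$ with $c>0$ as $x\to0^+$, one checks $\int_0 |y_1|^2\cW\,dx<\infty$ for every $\al>-1$, while $\int_0 |y_2|^2\cW\,dx\sim\int_0 x^{-\al}\,dx<\infty$ if and only if $\al<1$. Hence both solutions are square-integrable near $0$ precisely for $-1<\al<1$ (limit-circle) and only one survives for $\al\geq1$ (limit-point).

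For part (b) I would instead use the asymptotics at $\infty$. The weight satisfies $\cW(x)\sim x^{\nu}e^{-x}$ for some exponent $\nu$, so it decays exponentially regardless of $\al$. Transporting the classical large-$x$ behaviors $\sim e^{x}x^{\ast}$ and $\sim x^{\ast}$ through $A$, whose coefficients are rational and hence subdominant at $\infty$, yields one solution growing like $e^{x}x^{a}$ and one decaying like $x^{b}$. Then $|e^{x}x^{a}|^2\cW\sim e^{x}x^{\ast}$ is not integrable near $\infty$, whereas $|x^{b}|^2\cW\sim e^{-x}x^{\ast}$ is; exactly one solution is square-integrable, so $x=\infty$ is limit-point for every $\al>0$.

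The main obstacle is the regularity input used in part (a): establishing that for the general, possibly multi-step, exceptional Laguerre expression the seed-function Wronskian neither blows up nor vanishes at $x=0$, so that both the indicial exponents $\{0,-\al\}$ and the pure power behavior $\cW\sim x^\al$ are inherited from the classical case. Once this is secured the endpoint analysis is routine; the only remaining care is the coincident-root situation $\al\in\NN$, where one must confirm that the possible logarithmic contribution does not rescue the singular solution $y_2$ into $L^2$ near $0$.
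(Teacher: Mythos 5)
Your proposal is correct and follows essentially the same route as the paper: a Frobenius analysis at $x=0$ giving the indicial roots $0$ and $-\al$ (equivalently the indicial equation $r(r+\al)=0$), combined with square-integrability of $1$ and $x^{-\al}$ against the weight, and an asymptotic one-solution-in-$L^2$ argument at $x=\infty$, which the paper delegates to the reduction-of-order techniques of \cite[Theorem 3.3]{LLMS}. Your added care about the nonvanishing of the seed Wronskian at the origin (secured by Assumption \ref{a-partitions}) and the logarithmic case $\al\in\NN$ simply makes explicit what the paper's brief proof leaves implicit in the word ``admissible.''
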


\begin{proof}
    The point $x=0$ is, in the sense of Frobenius, a regular singular endpoint of the XOP expression $(\cL^\al-\lambda) f=0$ for any value $\lambda\in \mathbb C$. Using the classical Laguerre expression Eq.~\eqref{e-laguerre} to derive $\cL^\al$ associated with Eq.~\eqref{e-XOPop} yields an indicial equation of \[r(r+\al)=0.\] This indicial equation holds for any admissible choice of $\phi$ and $b$. Meanwhile, the point $x=\infty$ is an irregular singular point and a reduction of order method must be used to find two linearly independent solutions.  The result may be shown using the techniques within the proof of \cite[Theorem 3.3]{LLMS}.
\end{proof}

Since $\cL^\al$ is in the limit-circle case at $x=0$ for $-1<\al<1$, Glazman--Krein--Naimark theory requires that one appropriate boundary condition be imposed in order to generate a self-adjoint extension of the minimal operator. Thus if $-1<\al<1$, the deficiency index of $\cL$ is $(1,1)$. Meanwhile if $\al\geq 1$, the deficiency index is $(0,0)$.

\begin{ass}\label{a-alpha}
The XOP Laguerre expression $\cL^\al$ is assumed to have parameter $\al>-1$.
\end{ass}

In order to determine the spectrum of an XOP operator $\cL$, it is first necessary to determine the general solutions to the eigenvalue problem $(\cL-\la)f=0$. These solutions can be found through the intertwining property of Lemma \ref{l-intertwine}. By \cite[Section 3]{GGM2} the solutions may written as the Wronskian of a sequence of seed functions along with a solution to the eigenvalue problem of the COP operator.

Seed functions are the quasi-rational eigenfunctions of the Laguerre COP differential expression in Eqs.~\eqref{e-phi1}--\eqref{e-phi4} and will be indexed via
\begin{align}
f_j(x)&=L_{n_j}^{\al}(x),  &&j=1,\dots,r_1, \label{e-eigen1} \\
f_{r_1+j}(x)&=e^xL_{m_j}^{\al}(-x),  &&j=1,\dots,r_2, \label{e-eigen2} \\
f_{r_1+r_2+j}(x)&=x^{-\al}L^{-\al}_{m_j'}(x),  &&j=1,\dots,r_3, \label{e-eigen3} \\
f_{r_1+r_2+r_3+j}(x)&=e^x x^{-\al}L^{-\al}_{n_j'}(-x),  &&j=1,\dots,r_4, \label{e-eigen4}
\end{align}
with $r_1+r_2+r_3+r_4=r$, $n_1>\dots>n_{r_1}\geq0$, $m_1>\dots>m_{r_2}\geq0$, $n_1'>\dots>n_{r_1}'\geq0$ and $m_1'>\dots>m_{r_4}'\geq0$. The degrees of seed functions appearing in the Wronskian can be better described via two Maya diagrams, see Subsection \ref{ss-diagrams}. Denote these Maya diagrams by
\begin{equation}\label{e-generalizedmaya1}
M_1=\left(n_1',\dots,n_{r_4}'|n_1,\dots,n_{r_1}\right),
\end{equation}
and
\begin{equation}\label{e-generalizedmaya2}
M_2=\left(m_1',\dots,m_{r_3}'|m_1,\dots,m_{r_2}\right).
\end{equation}

The two general solutions of the classical Laguerre differential expression $\ell^{\al}$ given by Eq.~\eqref{e-laguerre} are denoted
\begin{equation}\label{e-shortsolns1}
    h^{\al}(x,\la):=M(-\la,\al+1,x)= \,_1F_1(-\la,\al+1,x)\hspace{2em} \text{for }\al+1\notin-\NN_0,
\end{equation}
and
\begin{equation}\label{e-shortsolns2}
    \wt{h}^{\al}(x,\la):=x^{-\al}M(-\la-\al,1-\al,x)\hspace{2em} \text{for }\al+1\notin\NN>1.
\end{equation}

A generalized Laguerre polynomial, $\Omega\ci{M_1,M_2}^{\al}$, is a Wronskian of seed functions with the same parameter $\al$ and distinct degrees. A prefactor is required to make it a polynomial. Let $\Omega\ci{M_1,M_2}^{\al}$ be defined by
\begin{equation}\label{e-particularlaguerre}
\Omega\ci{M_1,M_2}^{\al}(x)=e^{-(r_2+r_4)x}x^{(\al+r_1+r_2)(r_3+r_4)}\cdot\Wr\left[f_1,\dots,f_r\right],
\end{equation}
where $f_1,\dots,f_r$ are as in Eqs.~\eqref{e-eigen1}-\eqref{e-eigen4}. If both Maya diagrams are trivial or the associated partitions are empty, the generalized Laguerre polynomial is a constant function.

Not every set of seed functions will produce a valid XOP expression. The following notion will determine admissibility; ensuring that the generalized Laguerre polynomial defines a $\phi$ as in Subsection \ref{s-prelims}. The Maya diagrams $M_1$ and $M_2$ of Eqs.~\eqref{e-generalizedmaya1} and \eqref{e-generalizedmaya2} can be presented as partitions, denoted respectively by $\mu$ and $\nu$ in canonical position or $\mu'$ and $\nu'$ in conjugate canonical position. The lengths of these partitions, denoted $r(\mu)$ and $r(\nu)$ or $r(\mu')$ and $r(\nu')$, are determined by using the values $t_1$, $t_2$ and $t_1'$, $t_2'$ that needed to shift $M_1$ and $M_2$ into their canonical and conjugate canonical positions. In particular, 
\begin{equation}\label{e-rs1}
\begin{aligned}
r(\mu)&=
\begin{cases}
r_1+n_1'+1-r_4 & t_1>0 \\
r_1 & t_1=0 \\
r_1-\min\left\{k\in\NN_0:k\notin\{n_i\}_{i=1}^{r_1}\right\} & t_1<0
\end{cases}, \end{aligned}\end{equation}
and
\begin{equation}\label{e-rs2}
\begin{aligned}
r(\nu)&=
\begin{cases}
r_2+m_1'+1-r_3 & t_2>0 \\
r_2 & t_2=0 \\
r_2-\min\left\{k\in\NN_0:k\notin\{m_i\}_{i=1}^{r_2}\right\} & t_2<0
\end{cases} .
\end{aligned}
\end{equation}
The lengths of the conjugate partitions, $r(\mu')$ and $r(\nu')$, can be determined similarly but will not play an important role in our calculations. 

In canonical position, the Maya diagram can thus be relabeled and written as a partition defined by 
\begin{equation}\label{e-canonicalpartition1}
    \begin{aligned}
\mu&=\left(\mu_1,\dots,\mu_{r(\mu)}\right), \hspace{2em} \mu_j=n_j-r(\mu)+j, \hspace{2em} &&j=1,\dots,r(\mu),
 \end{aligned}
\end{equation}
and
\begin{equation}\label{e-canonicalpartition2}
    \begin{aligned}
\nu&=\left(\nu_1,\dots,\nu_{r(\nu)}\right), \hspace{2em} \nu_j=m_j-r(\nu)+j, \hspace{2em} &&j=1,\dots,r(\nu).
    \end{aligned}
\end{equation} where $r(\mu)+r(\nu)=r$.
Conjugate partitions are relabeled so that $r(\mu')+r(\nu')=r'$ and defined analogously. Partitions also define Young diagrams, where the number of boxes in each row corresponds to the entry of the partition. Conjugate partitions are then obtained by reflection over the main diagonal of the corresponding Young diagram. 
\begin{defn}
A partition $\upsilon=(\upsilon_1,\dots,\upsilon_r)$ with $\upsilon_r\geq 1$ is \textit{even} if $r$ is even and $\upsilon_{2j-1}=\upsilon_{2j}$ for every $j=1,\dots,r/2$.
\end{defn}
In order for a Maya diagram to represent a valid Laugerre XOP expression, we make the following assumption:
\begin{ass}\label{a-partitions}
The partition $\mu$ is even. 
\end{ass}

It follows from \cite{D, DP} that the polynomial $\Omega^{\al}_{\mu,\nu}$ has no zeros on $[0,\infty)$ if and only if $\mu$ is an even partition.  It is for this reason that we require an even partition for admissibility. For the remainder of the manuscript, it is assumed that all Maya diagrams will satisfy Assumption \ref{a-partitions}. Convention states that the empty partition (when $r=0$) is even. 

In order to apply the Darboux transform to the COP solutions of Eqs.~\eqref{e-shortsolns1} and \eqref{e-shortsolns2}, it is necessary to add another entry to the corresponding Wronskian and adjust the prefactor. In particular, denote
\begin{align}\label{e-firstgen} 
\Omega\ci{M_1,M_2}\left[h^{\al}(x,\la)\right]&=e^{-(r_2+r_4)x}x^{(\al+r_1+r_2+1)(r_3+r_4)}\cdot\Wr\left[f_1,\dots,f_r,h\right],
\end{align}
and
\begin{align}\label{e-secondgen}
\Omega\ci{M_1,M_2}\left[\wt{h}^{\al}(x,\la)\right]&=e^{-(r_2+r_4)x}x^{(\al+r_1+r_2)(r_3+r_4+1)}\cdot\Wr\left[f_1,\dots,f_r,\wt{h}\right].
\end{align}
For the sake of convenience, we will often refer to Eq.~\eqref{e-firstgen} as a solution of the first kind and Eq.~\eqref{e-secondgen} as a solution of the second kind. The parameter $\al$ is written only for the solution but it should be understood that this is also the parameter for all seed functions in the Wronskian.

\begin{remme}\label{r-poly} It is clear that adding a column with a solution $h^\al(x,\la)$ or $\wt{h}^\al(x,\la)$ to the Wronskian changes the expression from a polynomial to an infinite series. However, consider the case $\la=n$ in Eq.~\eqref{e-firstgen} where $n\in\NN_{\mu,\nu}$, the natural numbers without a subset that is determined by $\mu$ and $\nu$, see \cite[Definition 2.9]{BK} for details. Then the series terminates, and if multiplied by a normalizing constant will yield an exceptional Laguerre polynomial $L^{\al}_{\mu,\nu,n}$. These polynomials can emerge from either solution, and will be discussed further in Assumption \ref{a-type3} and Remark \ref{r-type3}.
\end{remme}

Under our standard assumptions, these Laguerre XOPs form a complete set of orthogonal polynomials on the positive real line \cite{D, DP}.

\begin{lem}\label{l-ortho}
Suppose $\al+r>-1$ and $\mu$ is an even partition. Then the polynomials $L^{\al}_{\mu,\nu,n}$ for $n\in\NN_{\mu,\nu}$ are orthogonal on $[0,\infty)$ with respect to the positive weight function
\begin{align}\label{e-newweight}
    W^{\al}_{\mu,\nu}(x)=\frac{x^{\al+r}e^{-x}}{\left[\Omega_{\mu,\nu}^{\al}(x)\right]^2},\hspace{2em}x>0.
\end{align}
That is, if $n,m\in\NN_{\mu,\nu}$ with $n\neq m$, then
\begin{align*}
    \int_0^{\infty}L^{\al}_{\mu,\nu,n}(x)L^{\al}_{\mu,\nu,m}(x)W^{\al}_{\mu,\nu}(x)dx=0.
\end{align*}
Moreover, they form a complete orthogonal set in $L^2\left([0,\infty),W^{\al}_{\mu,\nu}(x)dx\right)$.
\end{lem}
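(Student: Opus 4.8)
The plan is to establish orthogonality and completeness by pulling back the known spectral theory of the classical Laguerre polynomials through the Darboux–Crum intertwining relation of Lemma \ref{l-intertwine}. The key observation is that the exceptional polynomials $L^{\al}_{\mu,\nu,n}$ are, up to normalization, the images under a Darboux transformation $A$ (a composition of the first-order operators built from the seed functions) of classical Laguerre polynomials $L^{\al+r}_n$ or closely related eigenfunctions. First I would verify the weight-function identity: by the formula $\cW = P/b^2$ for the partner operator's weight, iterated $r$ times along the factorization chain, the cumulative prefactor $b^2$ equals $[\Omega^{\al}_{\mu,\nu}]^2$ while the shift in the exponent of $x$ accounts for passing from parameter $\al$ to $\al+r$, yielding exactly Eq.~\eqref{e-newweight}. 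This identifies the correct Hilbert space $L^2([0,\infty),W^{\al}_{\mu,\nu}dx)$ as the one intertwined with the classical space $L^2([0,\infty),x^{\al+r}e^{-x}dx)$.

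Next I would prove orthogonality directly from self-adjointness. The exceptional operator $\cL^{\al}_{\mu,\nu}$ (the extension containing these polynomials) is symmetric on the relevant domain, and the $L^{\al}_{\mu,\nu,n}$ are its eigenfunctions with \emph{distinct} eigenvalues $n \in \NN_{\mu,\nu}$, as recorded in Remark \ref{r-poly}. Eigenfunctions of a symmetric operator corresponding to distinct eigenvalues are orthogonal in the underlying inner product; since the boundary terms from the sesquilinear form \eqref{e-xopsesqui} vanish (the polynomials lie in the appropriate self-adjoint extension's domain, controlled by the limit-circle analysis of Theorem \ref{t-frob} under the hypothesis $\al+r>-1$), the orthogonality integral is zero for $n\neq m$. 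Alternatively, and perhaps more transparently, one transfers orthogonality via the intertwiner: using $A\ell = \cL A$ one shows $\langle A u, A v\rangle_{\cW} = c\,\langle u, v\rangle_{w}$ for classical eigenfunctions $u,v$ and a constant $c$, so orthogonality of the classical family pushes forward. Positivity of $W^{\al}_{\mu,\nu}$ on $(0,\infty)$ is immediate since $\Omega^{\al}_{\mu,\nu}$ is real and, by Assumption \ref{a-partitions} together with the cited results of \cite{D,DP}, has no zeros on $[0,\infty)$.

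Completeness is the main obstacle, and here I would invoke the structural results of \cite{D, DP} cited just before the statement, rather than reconstruct the argument. The delicate point is that completeness of a classical orthogonal polynomial basis does not automatically transfer through an unbounded intertwining operator, because the image need not span a closed subspace and the finitely many deleted degrees (the complement of $\NN_{\mu,\nu}$) must be accounted for. The standard resolution is to check that the map induced by the Darboux transform is, after the gauge/normalization built into the prefactors of Eq.~\eqref{e-particularlaguerre}, a unitary (or at least a bijective isometry onto the whole space) between the two weighted $L^2$ spaces; the condition $\al+r>-1$ guarantees the moment problem for $W^{\al}_{\mu,\nu}$ is determinate and the polynomials are dense. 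I would therefore state that completeness follows from \cite[]{D,DP} under the even-partition hypothesis, noting that the evenness is precisely what rules out zeros of $\Omega^{\al}_{\mu,\nu}$ and thus keeps $W^{\al}_{\mu,\nu}$ a genuine positive weight, so that no spurious singularities obstruct the density argument.

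In summary, orthogonality reduces to the symmetry of $\cL^{\al}_{\mu,\nu}$ and the distinctness of eigenvalues (or equivalently to pushing classical orthogonality through the intertwiner), positivity of the weight follows from the zero-free property guaranteed by Assumption \ref{a-partitions}, and completeness is imported from the literature; the step requiring the most care is confirming that the intertwining map respects the Hilbert-space structure well enough to transport both orthogonality and density simultaneously.
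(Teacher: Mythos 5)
Your proposal should first be measured against what the paper actually does here: nothing. The lemma is stated without proof, imported wholesale from Dur\'an \cite{D} and Dur\'an--P\'erez \cite{DP}; the sentence immediately preceding the statement and the remark immediately following it both simply point to those references. So the comparison is between your partial reconstruction and a pure citation. What you add that the paper does not: the verification of the weight via the factorization chain ($\cW=P/b^2$ iterated along the chain of Darboux steps), and the orthogonality argument from symmetry of the exceptional expression together with distinctness of the eigenvalues $n\in\NN_{\mu,\nu}$ (or, equivalently, pushing classical orthogonality through the intertwiner). Both are sound, and your use of the even-partition hypothesis to get positivity of $W^{\al}_{\mu,\nu}$ via the zero-free property of $\Omega^{\al}_{\mu,\nu}$ is exactly the role Assumption \ref{a-partitions} plays throughout the paper. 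Since you defer the completeness claim to \cite{D, DP}, your proposal lands in the same place the paper does.

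Two caveats on the completeness discussion, neither fatal given that deferral, but worth correcting. First, the Darboux intertwiner is not a unitary or bijective isometry between the two weighted $L^2$ spaces in general: it is an unbounded differential operator whose kernel and cokernel are precisely the source of the missing degrees, and the cited works do not establish completeness by showing the map is an isometry; they argue directly that a function orthogonal to every $L^{\al}_{\mu,\nu,n}$ must vanish, exploiting the explicit relation between the exceptional and classical weights. Second, determinacy of the moment problem yields density of the \emph{full} polynomial ring in $L^2(W^{\al}_{\mu,\nu}dx)$, which does not by itself give completeness of a family that omits finitely many degrees --- you correctly identify this as the delicate point, but the ``standard resolution'' you then describe is not the one actually used in the literature. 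If the lemma were to be made self-contained, that step would still require reproducing the argument of \cite{D} or \cite{DP}; as a citation it is fine and matches the paper.
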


Note that the assumptions of Lemma \ref{l-ortho} are not the best possible conditions, but they are convenient and straightforward to apply. More specific conditions for admissibility can be found in \cite{D, DP}.

Also, the solution of the second kind defined by Eq.~\eqref{e-secondgen} does not currently possess the correct asymptotic behavior near $0$; it should have asymptotic behavior of order $x^{-\al-r}$ to fit the Frobenius analysis of the corresponding exceptional Laguerre expression in Theorem \ref{t-frob}. The current formulation of this solution of the second kind as a polynomial is simply more convenient to work with and avoids further complicating notation. This discrepancy will be rectified in Section \ref{s-normalizations}, where other behavior near $x=0$ is needed to ensure the solution is in the proper Hilbert space. 

Finally, the construction of a general exceptional Laguerre expression requires beginning with the classical Laguerre expression, which has a parameter $\al$, and applying Darboux transforms.  In this process, $\al$ may be shifted. Therefore, the parameter $\al$ in Lemma \ref{l-ortho} should not be thought of as simply the same $\al$ from the original COP expression. One of the main goals of Section \ref{s-manipulations} is to track the necessary changes to the classical $\al$ as operations are performed.

\section{Manipulation of Maya Diagrams}\label{s-manipulations}

Information must be extracted from the two solutions of the XOP Laguerre expression given in Eqs.~\eqref{e-firstgen} and \eqref{e-secondgen} in order to build the Weyl $m$-function.  The results follow the general methods of \cite[Theorem 4.2, Lemma 4.4]{BK}, which may be consulted for additional insight. The presentation here involves significant additions. Most importantly, it is necessary to track changes of the spectral parameter through the shifts as well as any constants produced by the spectral parameter. The format of these solutions can vary significantly based on how the Maya diagrams are shifted. In this section, the Maya diagrams are translated into standard positions where the necessary information can be extracted. Solutions of the first kind have their Maya diagrams shifted into canonical position, while solutions of the second kind are shifted into conjugate canonical position. A few additional assumptions are necessary.

\begin{ass}\label{a-mayaorder}
	Without loss of generality, we assume that $M_1$ is shifted to canonical form prior to any shifts that may be necessary for $M_2$. Indeed, the steps of the shifting process outlined in Theorems \ref{t-new4.2} and \ref{t-new4.22} may be applied to $M_1$ and $M_2$ in a variety of orders, but this assumption minimizes notational complexity.
\end{ass}

\begin{ass}\label{a-signs}
Factors of $(-1)^d$ for some $d\in\NN$ that arise in calculations are omitted throughout the remainder of the manuscript. Tracking this factor requires substantial notation and does not meaningfully contribute to results within. Such a factor appears in the equations of Theorems \ref{t-new4.2}, \ref{t-new4.22}, \ref{t-first0}, Corollaries \ref{c-simple4.2}, \ref{c-second0}, \ref{c-simple0} and the $m$-functions of Section \ref{s-mfunctions}. Therefore, all equalities in these results are correct up to a change of sign. The cumulative effect of these factors throughout the construction of the Weyl $m$-function can still be determined by enforcing the property that this function is a so-called Nevanlinna--Herglotz function: an analytic self-map of the upper half-plane. See i.e.~\cite[Section 6]{GLN} for examples where this property is utilized in a similar context. 
\end{ass}

We begin by shifting the Maya diagrams defining a solution of the first kind into canonical position. The notation $x^{(n)}=x(x+1)\cdots(x+n-1)$ and $x_{(n)}=x(x-1)\cdots(x-n+1)$ is used to denote the rising and falling factorials, respectively.

	\begin{theo}\label{t-new4.2}
		Let $f_1,\dots,f_r$ be as in Eqs.~\eqref{e-eigen1}-\eqref{e-eigen4}, $M_1$ and $M_2$ be as in Eqs.~\eqref{e-generalizedmaya1} and \eqref{e-generalizedmaya2}, and $\mu$ and $\nu$ be their associated partitions after shifts $t_1$ and $t_2$ to canonical position, respectively. Then for $\la\neq n_i$ with $i=1,\dots,r_1$,
		\begin{align*}
			\Omega\ci{M_1,M_2}\left[h^{\al}(x,\la)\right]=C_1(\al,\la,M_1)C_2(\al,\la,M_2)C_3(\al,M_1,M_2)\Omega_{\mu,\nu}\left[h^{\al-t_1-t_2}(x,\la+t_1)\right],
		\end{align*}
		where
		\begin{align}
			C_1(\al,\la,M_1)&=\begin{cases}
				\frac{(-\la)^{(|t_1|)}}{(\al+1)^{(|t_1|)}} & \text{ for } t_1<0, \\
				1 & \text{ for }t_1=0, \\
				\frac{(\al)_{(t_1)}}{\prod_{k_1}(-\la-k_1-1)} & \text{ for }t_1>0, \\
			\end{cases} \label{e-c1}
		\end{align}
		for $k_1\in\{0,\dots,t_1-1\}$ such that $k_1\notin\{n_j'\}_{j=1}^{r_4}$;
	
		\begin{align}
			C_2(\al,\la,M_2)&=\begin{cases}
				\frac{(\al+1+\la)^{(|t_2|)}}{(\la+t_1)^{|t_2|}} & \text{ for }t_2<0, \\
				1 & \text{ for }t_2=0, \\
				\frac{(-\la-t_1)^{t_2-r_3}\cdot\prod_{i=1}^{r_3}(\al-t_1-m_i')}{\prod_{k_2}(\la+\al-k_2)} & \text{ for }t_2>0, \\
			\end{cases} \label{e-c2}
		\end{align}
		for $k_2\in\{0,\dots,t_2-1\}$ such that $k_2\notin\{m_j'\}_{j=1}^{r_3}$; and 
		
		\begin{align}\label{e-c3}
			C_3(\al,M_1,M_2)=&\prod_{j=1}^{r_1}\prod_{k=1}^{r_3}\left(m_k'-\al-n_j\right)\prod_{j=1}^{r_2}\prod_{k=1}^{r_4}\left(n_k'-\al-m_j\right) \\ \nonumber
			&\times\prod_{j=1}^{r_1}\prod_{k=1}^{r_4}\left(n_j+n_k'+1\right)\prod_{j=1}^{r_2}\prod_{k=1}^{r_3}\left(m_j+m_k'+1\right). 
		\end{align}
	
	\end{theo}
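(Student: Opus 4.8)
The plan is to reduce the Wronskian $\Wr[f_1,\dots,f_r,h]$ to the canonical-form Wronskian $\Wr$ underlying $\Omega_{\mu,\nu}[h^{\al-t_1-t_2}(x,\la+t_1)]$ by performing the shifts $t_1$ (for $M_1$) and $t_2$ (for $M_2$) one integer step at a time, tracking the multiplicative constant at each step. Following Assumption \ref{a-mayaorder}, I would shift $M_1$ into canonical position first and then $M_2$. The engine for each single step is a pair of contiguous-relation / ladder-type identities for the generalized Laguerre polynomials appearing in the seed functions of Eqs.~\eqref{e-eigen1}--\eqref{e-eigen4}, together with the fact that a shift of the Maya-diagram origin corresponds to adjoining or deleting a seed function and simultaneously shifting the parameter $\al$. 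Concretely, I expect each elementary shift to act on the solution $h^\al(x,\la)$ itself by the raising/lowering relations among the confluent hypergeometric functions $M(-\la,\al+1,x)$ in Eq.~\eqref{e-shortsolns1}, which is exactly where the $\la$-dependent factors $(-\la)^{(|t_1|)}$, $(\al+1)^{(|t_1|)}$, and the products over $k_1$ in $C_1$ are produced. The whole argument is a bookkeeping induction: assume the formula holds after $s$ elementary shifts, perform one more, and verify that the accumulated prefactor telescopes into the claimed rising/falling factorials.

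The key technical steps, in order, are as follows. First, I would establish a single-step lemma: shifting $M_1$ by one unit toward canonical position multiplies the Wronskian by an explicit elementary factor, and I would separate the cases $t_1>0$, $t_1=0$, $t_1<0$, since a positive shift removes a filled box to the right (producing the falling-factorial numerator $(\al)_{(t_1)}$ and the denominator product over $k_1\notin\{n_j'\}$) while a negative shift fills an empty box to the left (producing $(-\la)^{(|t_1|)}/(\al+1)^{(|t_1|)}$). Crucially, the index set $\{0,\dots,t_1-1\}\setminus\{n_j'\}_{j=1}^{r_4}$ in $C_1$ records precisely which boxes are \emph{not} already filled and therefore genuinely contribute a factor. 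Second, I would iterate this to obtain $C_1$, and then repeat the entire procedure for $M_2$ to obtain $C_2$; here the parameter has already been shifted by $t_1$, which is why $C_2$ depends on $\la+t_1$, $\al-t_1$, and why the product in the $t_2>0$ case runs over $m_i'$ with the shifted argument $\al-t_1-m_i'$. Third, the \emph{interaction} constant $C_3$, which is independent of $\la$, arises from the cross terms when the two Maya diagrams are processed together: the four double products in Eq.~\eqref{e-c3} are exactly the pairwise ``resultant-type'' factors between the four blocks of seed functions indexed by $r_1,r_2,r_3,r_4$ (filled/empty boxes of $M_1$ against those of $M_2$). I would identify these by expanding the Wronskian of the combined seed system and recognizing the Vandermonde-like products in the degrees; the equivalence relations for such Wronskians from \cite{BK,GGM2} give the template for this computation. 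Throughout I invoke Assumption \ref{a-signs} to discard the overall sign $(-1)^d$.

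The main obstacle will be the third step: correctly isolating $C_3$ and proving that the $\la$-dependent and $\la$-independent contributions factor cleanly as $C_1 C_2 C_3$ rather than becoming entangled. The difficulty is that a single elementary shift of $M_1$ past the seed functions of $M_2$ (and vice versa) generates both a spectral-parameter factor and a pure degree-difference factor, and one must verify these do not mix — i.e.~that the cross-block products depend only on the \emph{unshifted} degrees $n_j, m_j, n_k', m_k'$ and on $\al$, never on $\la$. I would handle this by carefully ordering the shifts per Assumption \ref{a-mayaorder} and by checking that each elementary Wronskian identity cleanly separates the column containing $h^\al(x,\la)$ (which carries all $\la$-dependence) from the purely polynomial seed columns (which carry the $C_3$ factors). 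The normalization conditions that $h$ must satisfy, and the fact that the final object $\Omega_{\mu,\nu}[h^{\al-t_1-t_2}(x,\la+t_1)]$ is again a Wronskian of the same structural form, serve as consistency checks on the endpoint of the induction. The excluded case $\la\neq n_i$ is exactly the degeneracy where the added column $h$ would coincide with an existing seed function $f_i=L_{n_i}^\al$, making the Wronskian vanish; this must be flagged to keep every intermediate quotient well-defined.
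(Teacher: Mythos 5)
Your proposal matches the paper's proof in essentially every respect: the paper first establishes the single-step reduction lemma you describe (Corollary \ref{c-0reduction}, proved by factoring common column factors and applying the derivative identities \eqref{e-derivatives}--\eqref{e-solnderivs}), then iterates it with exactly your case analysis on the sign of $t_1$, $t_2$ and on whether the box adjacent to the origin is filled, accumulating the non-product prefactors into $C_1$ and $C_2$ and deferring the accumulated seed-function products ($C_3$) to \cite[Section 4.4]{BK}, just as you do. The clean $\la$-separation you worry about is automatic in this scheme because each single-step formula already isolates the $h$-column's contribution from the purely polynomial product factors.
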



The constants from Theorem \ref{t-new4.2} will be collectively denoted as 
\begin{align}\label{e-C}
C:=C_1(\al,\la,M_1)C_2(\al,\la,M_2)C_3(\al,M_1,M_2).
\end{align}

The proof of Theorem \ref{t-new4.2} requires several intermediate steps. Recall the following derivative identities:
\begin{equation}\label{e-derivatives}
\begin{aligned}
\frac{d}{dx}\left(L_n^{\al}(x)\right)&=-L_{n-1}^{\al+1}(x), \\
\frac{d}{dx}\left(e^x L_n^{\al}(-x)\right)&=e^x L_{n}^{\al+1}(-x), \\
\frac{d}{dx}\left(x^{-\al}L_n^{-\al}(x)\right)&=(n-\al)x^{-\al-1}L_{n}^{-\al-1}(x), \\
\frac{d}{dx}\left(x^{-\al} e^x L_n^{-\al}(-x)\right)&=(n+1)x^{-\al-1}e^x L_{n+1}^{-\al-1}(-x).
\end{aligned}
\end{equation}
The derivatives of Eq.~\eqref{e-derivatives} involve normalizing constants, which result in significant cancellations, but otherwise are special cases of the following derivative identities for confluent hypergeometric functions
\begin{equation}\label{e-solnderivs}
\begin{aligned}
\frac{d}{dx}\left(M(a,b,x)\right)&=\frac{a}{b}M(a+1,b+1,x), \\
\frac{d}{dx}\left(e^x M(a,b,-x)\right)&=\frac{b-a}{b}e^x M(a,b+1,-x), \\
\frac{d}{dx}\left(x^{b-1}M(a,b,x)\right)&=(b-1)x^{b-2}M(a,b-1,x), \\
\frac{d}{dx}\left(x^{b-1} e^x M(a,b,-x)\right)&=-(b-1)x^{b-2}e^{x}M(a-1,b-1,-x).
\end{aligned}
\end{equation}
The results of Eq.~\eqref{e-solnderivs} will be used to take derivatives of $h$ and $\wt{h}$. 

Additionally, the following elementary Wronskian identities that will be useful for calculations. Assume that the functions $f_1,\dots,f_r,g,$ and $h$ are all sufficiently differentiable. Then
\begin{align}
    \Wr\left[g\cdot f_1,\dots,g\cdot f_r\right]&=\left(g(x)\right)^r\cdot\Wr\left[f_1,\dots,f_r\right], \label{e-wronskian1} \\
    \Wr\left[f_1\circ h,\dots,f_r\circ h\right]&=\left(h'(x)\right)^{\frac{r(r-1)}{2}}\cdot\Wr\left[f_1,\dots,f_r\right]\cdot h(x).\label{e-wronskian2}
\end{align}

To prove Theorem \ref{t-new4.2}, we begin with a Corollary that involves simplifications when $0$ appears in the encoding of one of the Maya diagrams $M_1$ or $M_2$. This is an alteration of \cite[Lemma 4.4]{BK}. 

\begin{cor}\label{c-0reduction}
Let $M_1$ and $M_2$ be given by Eqs.~\eqref{e-generalizedmaya1} and \eqref{e-generalizedmaya2}.
\begin{itemize}
    \item[(a)] If $n_{r_1}=0$, then
    \begin{align}\label{e-r10red}
    \Omega\ci{M_1,M_2}\left[h^{\al}(x,\la)\right]=\left(\frac{-\la}{\al+1}\prod_{i=1}^{r_3}(m_i'-\al)\prod_{i=1}^{r_4}(n_i'+1)\right)\Omega\ci{M_1-1,M_2}\left[h^{\al+1}(x,\la-1)\right].
    \end{align}
    
    \item[(b)] If $n_{r_4}'=0$, then
    \begin{align}\label{e-r40red}
    \Omega\ci{M_1,M_2}\left[h^{\al}(x,\la)\right]=\left(\al\prod_{i=1}^{r_1}(n_i+1)\prod_{i=1}^{r_2}(m_i+\al)\right) \Omega\ci{M_1+1,M_2}\left[h^{\al-1}(x,\la+1)\right].
    \end{align}
    
    \item[(c)] If $m_{r_2}=0$, then 
    \begin{align}\label{e-r20red}
    \Omega\ci{M_1,M_2}\left[h^{\al}(x,\la)\right]=\left(\frac{\al+1+\la}{-\la}\prod_{i=1}^{r_3}(m_i'+1)\prod_{i=1}^{r_4}(n_i'-\al)\right) \Omega\ci{M_1,M_2-1}\left[h^{\al+1}(x,\la)\right].
    \end{align}
    
    \item[(d)] If $m_{r_3}'=0$, then 
    \begin{align}\label{e-r30red}
    \Omega\ci{M_1,M_2}\left[h^{\al}(x,\la)\right]=\left(\al\prod_{i=1}^{r_1}(n_i+\al)\prod_{i=1}^{r_2}(m_i+1)\right)\Omega\ci{M_1,M_2+1}\left[h^{\al-1}(x,\la)\right].
    \end{align}
\end{itemize}
\end{cor}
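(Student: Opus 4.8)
The plan is to treat all four parts by a single template and run it four times. In each case the hypothesis forces one seed function to have degree zero, and because $L_0^\beta\equiv1$ this seed is elementary: writing $\psi$ for it, we have $\psi=1$ in (a), $\psi=e^xx^{-\al}$ in (b), $\psi=e^x$ in (c) and $\psi=x^{-\al}$ in (d). First I would reorder the columns of $\Wr[f_1,\dots,f_r,h]$ to bring $\psi$ to the front, absorbing the resulting sign into Assumption~\ref{a-signs}. The engine is the reduction identity
\begin{align*}
\Wr[\psi,g_1,\dots,g_k]=\psi^{k+1}\,\Wr\!\left[(g_1/\psi)',\dots,(g_k/\psi)'\right],
\end{align*}
obtained by factoring $\psi$ out with Eq.~\eqref{e-wronskian1} and then expanding $\Wr[1,g_1/\psi,\dots,g_k/\psi]$ along its constant first column, using $\Wr[1,\phi_1,\dots,\phi_k]=\Wr[\phi_1',\dots,\phi_k']$. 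Here $g_1,\dots,g_k$ are the remaining seeds together with $h$, so $k=r$.

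The core of the argument is evaluating each $(g_i/\psi)'$. I would differentiate using the Laguerre identities of Eq.~\eqref{e-derivatives} for the seeds and the confluent-hypergeometric identities of Eq.~\eqref{e-solnderivs} for $h$, then collapse the results with the standard three-term contiguous relations for $L_n^\al$ and for $M(a,b,x)$. The upshot is that each $(g_i/\psi)'$ equals an explicit scalar times a seed of shifted parameter $\al\pm1$ and shifted degree, times a factor common to every column. Writing $D=\tfrac{d}{dx}$, the $h$-column is handled by $DM(a,b,x)=\tfrac ab M(a+1,b+1,x)$ in (a), giving $\tfrac{-\la}{\al+1}h^{\al+1}(x,\la-1)$, and by $(D-1)M(a,b,x)=\tfrac{a-b}{b}M(a,b+1,x)$ in (c), giving a multiple of $h^{\al+1}(x,\la)$; in (b) and (d) one instead lowers $b$, shifting the spectral parameter to $\la+1$ in (b) and leaving it fixed in (d). In (a) the differentiated columns are already the shifted seeds, so the common factor is trivial; in (c) a uniform $e^{-x}$ appears and combines with $\psi^{r+1}=e^{(r+1)x}$; in (b) and (d) the common factor is $e^{-x}x^{\al-1}$ and $x^{\al-1}$. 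Having exposed it, I would pull the common factor out of all $r$ columns with Eq.~\eqref{e-wronskian1} and extract the scalars by multilinearity, collecting products such as $\prod(m_i'-\al)$ and $\prod(n_i'+1)$.

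It then remains to reconcile prefactors. Restoring $e^{-(r_2+r_4)x}$ and $x^{(\al+r_1+r_2+1)(r_3+r_4)}$ from Eq.~\eqref{e-firstgen} on both sides and recording how the block sizes change under the shift --- in (a), for instance, the vanishing index leaves the right block of $M_1$, so $r_1\mapsto r_1-1$ while $n_j'\mapsto n_j'+1$ --- the leftover powers of $\psi$, of the common factor, and of the two prefactors cancel identically, the exponent identity relying on $r_1+r_2+r_3+r_4=r$. What survives is a scalar multiple of the target $\Omega$ (one of $\Omega\ci{M_1\pm1,M_2}$ or $\Omega\ci{M_1,M_2\pm1}$), and reading off that scalar from the products collected above yields each stated identity. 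Case (a) is the cleanest and I would present it first as the model; (c) differs only in the $(D-1)$ step, and (b), (d) are the $\al\mapsto\al-1$ mirrors.

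The hard part will be the second step: arranging the contiguous-relation algebra so that the four differentiated seed types align with a \emph{single} common power of $x$ (and, in (b), of $e^x$). This homogeneity --- which is what lets Eq.~\eqref{e-wronskian1} strip the factor off in one stroke --- holds only because the target type-3 and type-4 seeds carry a compensating $x^{-(\al\pm1)}$ that absorbs the apparent mismatch between the polynomial-valued derivatives (types 3, 4) and the $x^{\al-1}$-valued ones (types 1, 2, and $h$). Confirming this alignment, together with the cancellation of prefactor exponents, is the only genuinely delicate bookkeeping; the explicit constants then emerge as byproducts of the very same identities.
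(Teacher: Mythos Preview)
Your proposal is correct and follows essentially the same route as the paper's own proof: factor the degree-zero seed $\psi$ out of every column via Eq.~\eqref{e-wronskian1}, collapse the resulting constant column, and identify each differentiated column $(g_i/\psi)'$ as a scalar times a seed of the shifted family with a uniform common factor that is then stripped off by a second application of Eq.~\eqref{e-wronskian1}. The paper spells out only case~(b) and declares the others analogous; your uniform template across all four cases, and your flagging of the column-homogeneity check and the prefactor-exponent reconciliation as the genuine bookkeeping, are both accurate.

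One small economy: you will not actually need separate three-term contiguous relations. After dividing by $\psi$, each remaining seed again has one of the four shapes appearing on the left-hand sides of Eq.~\eqref{e-derivatives} (with suitably substituted parameters), so those four identities --- together with the $h$-derivatives in Eq.~\eqref{e-solnderivs} --- already deliver $(g_i/\psi)'$ directly in the required form. This is exactly how the paper justifies the step in its proof of~(b), writing $\frac{d}{dx}(e^{-x}x^{\al}f_j)=c_j\,e^{-x}x^{\al-1}\wt f_j$ in one line.
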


\begin{proof}
We give only the proof of (b) as the other claims follow from using the same procedure. Let $n_{r_4}'=0$. Then $f_r=e^x x^{-\al}$. Removing a factor of $e^x x^{-\al}$ from each column via Eq.~\eqref{e-wronskian1} and expanding about the second to last column yields 
\begin{align*}
    \Omega\ci{M_1,M_2}\left[h^{\al}(x,\la)\right]&=\left(e^x x^{-\al}\right)^{(r+1)}e^{-(r_2+r_4)x}x^{(\al+r_1+r_2+1)(r_3+r_4)} \\
    &\hspace{3em}\times\Wr\left[e^{-x}x^{\al}f_1,\dots,e^{-x}x^{\al}f_{r-1},1,e^{-x}x^{\al}h\right](x,\la) \\
    &=(-1)^{r-1}\left(e^x x^{-\al}\right)^{(r+1)}e^{-(r_2+r_4)x}x^{(\al+r_1+r_2+1)(r_3+r_4)} \\
    &\hspace{3em}\times\Wr\left[\frac{d}{dx}\left(e^{-x}x^{\al}f_1\right),\dots,\frac{d}{dx}\left(e^{-x}x^{\al}f_{r-1}\right),\frac{d}{dx}\left(e^{-x}x^{\al}h\right)\right](x,\la).
\end{align*}
Let $\wt{M}_1=M_1+1$ and let $\wt{f}_1,\dots,\wt{f}_{\wt{r}}$ be the functions associated with the Maya diagram $\wt{M}_1$ and parameter $\wt{\al}=\al-1$. Also let $\wt{r}_1=r_1$, $\wt{r}_2=r_2$, $\wt{r}_3=r_3$, $\wt{r}_4=r_4-1$ and $\wt{r}=r-1$. Eqs.~\eqref{e-derivatives} and \eqref{e-solnderivs} imply that
\begin{align*}
    \frac{d}{dx}\left(e^{-x}x^{\al}f_j\right)&=c_j e^{-x}x^{\al-1}\wt{f}_j \hspace{3em} \text{ for }j=1,\dots,r-1,
\end{align*}
and
\begin{align*}
    \frac{d}{dx}\left(e^{-x} x^{\al} M(-\la,\al+1,x)\right)&=\al e^{-x}x^{\al-1}M(-\la-1,\al,x)\\
    &=\al e^{-x}x^{\al-1}h^{\al-1}(x,\la+1),
\end{align*}
where $c_j=n_j+1$ for $j=1,\dots,r_1$; $c_{r_1+j}=m_j+\al$ for $j=1,\dots,r_2$; $c_{r_1+r_2+j}=-1$ for $j=1,\dots,r_3$; and $c_{r_1+r_2+r_3+j}=1$ for $j=1,\dots,r_4-1$. Each column of the Wronskian has a corresponding constant removed, and then the common factor $e^{-x}x^{\al-1}$ is removed by Eq.~\eqref{e-wronskian1}, yielding
\begin{align*}
    \Omega\ci{M_1,M_2}\left[h^{\al}(x,\la)\right]&=\wt{c}_1\left(e^x x^{-\al}\right)^{(r+1)}\left(e^{-x} x^{\al-1}\right)^{(r)}e^{-(r_2+r_4)x}x^{(\al+r_1+r_2+1)(r_3+r_4)} \\
    &\hspace{5em}\times\Wr[\wt{f}_1,\dots,\wt{f}_{r-1},h](x,\la+1) \\
    &=\wt{c}_1e^{-(r_2+r_4-1)x}x^{(\al+r_1+r_2+1)(r_3+r_4)-\al-r}\cdot\Wr[\wt{f}_1,\dots,\wt{f}_{r-1},h](x,\la+1),
\end{align*}
where 
\begin{align*}
\wt{c}_1=\al(-1)^{r-1}\prod_{j=1}^{r_4-1}c_j=\al(-1)^{r_1+r_2+r_4-1}\prod_{j=1}^{r_1}(n_j+1)\prod_{j=1}^{r_2}(m_j+\al).
\end{align*}
Note the analogous step in the proof of \cite[Lemma 4.4]{BK} omits a $-\al$ term from the exponent of $x$ but the end result is unchanged. Claim (b) follows because
\begin{align*}
(\al+r_1+r_2+1)(r_3+r_4)-\al-r&=(\al+r_1+r_2)(r_3+r_4-1)\\&=(\wt{\al}+\wt{r}_1+\wt{r}_2+1)(\wt{r}_3+\wt{r}_4).
\end{align*}
\end{proof}

Collectively, (a)--(d) of Corollary \ref{c-0reduction} describe how the spectral parameter $\la$ and $\al$ change as well as the constants produced with any shift of $M_1$ or $M_2$ one unit left or right.  We will continue to keep track of these changes while iterating the shifts until the Maya diagrams are in canonical or conjugate canonical form. Note that we will appeal to \cite[Section 4.4]{BK} to show the exact form of the products in the statement of Theorem \ref{t-new4.2}, as there are no alterations to that argument, we simply keep track of additional parameters.

\begin{proof}[Proof of Theorem \ref{t-new4.2}]
	To begin, we verify Eqs.~\eqref{e-c1} and \eqref{e-c2}.  These results only consider contributions of the non-product coefficient factors found in Eqs.~\eqref{e-r10red}-\eqref{e-r30red}.  These product terms are collected and addressed in the verification of Eq.~\eqref{e-c3}. 
	
	We begin by shifting $M_1$ into canonical form.  If $t_1=0$, the Maya diagram $M_1$ is already in canonical form and no shift is required.  If $t_1<0$, then $|t_1|$ boxes to the immediate right of the origin are filled; that is, $n_{r_1-j}=j$ for $j=0,\dots,|t_1|-1$.  Hence Eq.~\eqref{e-r10red} will be applied $|t_1|$ times to $M_1$.  
	Therefore, $t_1$ applications of Eq.~\eqref{e-r10red} yield coefficient factor
	
	\begin{align*}
		\frac{(-\la)^{(|t_1|)}}{(\al+1)^{(|t_1|)}}.
	\end{align*}
	Note that for $t_1<0$, the parameters of $h$ are now $\al-t_1$ and $\la+t_1$.  
	
	If $t_1>0$, denote
	\begin{align*}
		\wt{M}_1=M_1+1:\hspace{1em}\left(\wt{n}_1',\dots,\wt{n}_{\wt{r}_4}'~|~\wt{n}_1,\dots,\wt{n}_{\wt{r}_1}\right).
	\end{align*}
	Determining the appropriate equation of Corollary \ref{c-0reduction} to apply depends upon whether the box immediately to the left of the origin of $M_1$ is or is not filled.  There are two cases:
	\begin{itemize}
		\item[(i)] If the box to the left of the origin is empty then $n_{r_4}'=0$, $\wt{r}_4=r_4-1$, $\wt{r}_1=r_1$, and
		\begin{align*}
			\wt{M}_1:\hspace{1em}\left(n_1'-1,\dots,n_{r_4-1}'-1~|~n_1+1,\dots,n_{r_1}+1\right).
		\end{align*} 
		\item[(ii)] If the box to the left of the origin is filled then $\wt{n}_{r_1+1}=0$, $\wt{r}_4=r_4$, $\wt{r}_1=r_1+1$, and 
		\begin{align*}
			\wt{M}_1:\hspace{1em}\left(n_1'-1,\dots,n_{r_4-1}'-1~|~n_1+1,\dots,n_{r_1}+1,0\right).
		\end{align*}
	\end{itemize} In total, $M_1$ requires $t_1$ shifts in order to be in canonical form.  There will be $r_4$ boxes to the left of the origin that are empty; hence we are in Case (i) of above and Eq.~\eqref{e-r40red} will be applied $r_4$ times.  As a result, each application of Eq.~\eqref{e-r40red}  lowers $\al$ by one and increases $\la$ by one.  Additionally for each $i\in \{1,\ldots r_4\}$, a factor of $(\al-n_i')$ will be contributed to the coefficient. Case (ii) occurs $t_1-r_4$ times and Eq.~\eqref{e-r10red} will be applied.  As $M_1$ needs to be shifted to the right, Eq.~\eqref{e-r10red} must be restated as

	\begin{align*}
		\Omega\ci{M_1,M_2}\left[h^{\al}(x,\la)\right]=\left[\left(\frac{-\la-1}{\al}\right)\prod_{i=1}^{r_3}(m_i'-\al+1)\prod_{i=1}^{r_4}n_i' \right]^{-1} \Omega\ci{M_1+1,M_2}\left[h^{\al-1}(x,\la+1)\right].
	\end{align*}
	Each application of Eq.~\eqref{e-r10red} also lowers $\al$ by one and increases $\la$ by one.  A factor of $(\al-k_1)/(-\la-k_1-1)$ for each $k_1\in\{0,\dots,t_1-1\}$ such that $k_1\notin\{n_j'\}_{j=1}^{r_4}$ will be collected by the coefficient.
	
	Together contributions from all Case (i) and Case (ii) shifts produce a coefficient of
	\begin{align}\label{e-type1shift}
		\frac{(\al)_{(t_1)}}{\prod_{k_1}(-\la-k_1-1)},
	\end{align} and the parameters of $h$ are now $\al-t_1$ and $\la+t_1$.
	Eq.~\eqref{e-c1} has been verified. 
	
	Now, we consider the adjustments required to shift $M_2$ into canonical form.  As $M_1$ is first put into canonical form, the parameters for $h$ are now $\al-t_1$ and $\la+t_1$ for  $t_1\in \mathbb Z$. If $t_2=0$, the Maya diagram $M_2$ is already in canonical form and no shift is required.  If $t_2<0$,  then $|t_2|$ boxes to the immediate right of the origin are filled; that is, $m_{r_2-j}=j$ for $j=0,\dots,|t_2|-1$.  Hence Eq.~\eqref{e-r20red} will be applied $|t_2|$ times to $M_2$.  Therefore,  $t_2$ applications of Eq.~\eqref{e-r20red} yields additional coefficient factors
	\begin{align*}
	\frac{(\al+1+\la)^{(|t_2|)}}{(\la+t_1)^{|t_2|}}.
	\end{align*}
	
	For $t_2>0$, verification follows in a similar fashion as the $M_1$ case except it relies on the reductions, and therefore the constants from, Eq.~\eqref{e-r20red} and the restatement of Eq.~\eqref{e-r20red}: 
	\begin{align*}
		\Omega\ci{M_1,M_2}\left[h^{\al}(x,\la)\right]=\left[\left(\frac{\al+\la}{-\la}\right)\prod_{i=1}^{r_3}m_i'\prod_{i=1}^{r_4}(n_i'-\al+1)\right]^{-1}\Omega\ci{M_1,M_2+1}\left[h^{\al-1}(x,\la)\right].
	\end{align*}

	The contribution of the products found in Eqs.~\eqref{e-r10red}-\eqref{e-r30red} is contained in $C_3$ and the proof is identical to \cite[Section 4.4]{BK}. 
	\end{proof}

The method of proof of Theorem \ref{t-new4.2} may be repeated with a solution of the second kind, but there are a few new obstacles. First, it is necessary to state the result with conjugate partitions; shifting the Maya diagrams to the conjugate canonical position so that the partitions are determined by the eigenfunctions in Eqs.~\eqref{e-eigen3} and \eqref{e-eigen4}. This is due to the fact that the solution $\wt{h}$ has the same format and similar derivative formula as the seed function Eq.~\eqref{e-eigen3}---a shift to canonical position would not change this fact. The conjugate canonical position, on the other hand, still allows the use of partitions.  Second, there is no formula giving the product of constants from the shifting functions (stated as $C_3$ in Theorem \ref{t-new4.2}) in the literature. It is therefore necessary to prove such a formula. 
 
\begin{theo}\label{t-new4.22}
	Let $f_1,\dots,f_r$ be as in Eqs.~\eqref{e-eigen1}-\eqref{e-eigen4}, $M_1$ and $M_2$ be as in Eq.~\eqref{e-generalizedmaya1} and \eqref{e-generalizedmaya2}, and $\mu',\nu'$ be their associated conjugate partitions after shifts $t_1'$ and $t_2'$ to conjugate canonical position, respectively. Then for $\la\neq m_i'$ with $i=1,\dots,r_3$,
	\begin{align*}
		\Omega\ci{M_1,M_2}\left[\wt{h}^{\al}(x,\la)\right]&=D_1(\al,\la,M_1)D_2(\al,\la,M_2)D_3(\al,M_1,M_2) \Omega_{\mu',\nu'}\left[\wt{h}^{\al-t_1'-t_2'}(x,\la+t_1')\right],
	\end{align*}
	where
	\begin{align}
		D_1(\al,\la,M_1)&=
		\begin{cases}
			\frac{(-\al)_{(|t_1'|)}}{\prod_{k_1'} (\la-k_1')} & \text{ for }t_1'<0, \\
			1 & \text{ for }t_1'=0, \\
						\frac{(\la+1)^{(t_1')}}{(1-\al)^{(t_1')}} & \text{ for }t_1'>0, 
		\end{cases} \label{e-d1}
	\end{align}
	for $k_1'\in\{0,\dots,|t_1'|-1\}$ such that $k_1'\notin\{n_j\}_{j=1}^{r_4}$;
	\begin{align}
		D_2(\al,\la,M_2)&=\begin{cases}
			\frac{(-\al+t_1')_{(|t_2'|)}}{\prod_{k_2'}(-\la-\al-1-k_2')} & \text{ for }t_2'<0, \\
			1 & \text{ for }t_2'=0, \\
			\frac{(-\la-\al)^{(t_2')}}{(1-\al+t_1')^{(t_2')}} & \text{ for }t_2'>0, 
		\end{cases}\label{e-d2}
	\end{align}
	for $k_2'\in\{0,\dots,|t_2'|-1\}$ such that $k_2'\notin\{m_j\}_{j=1}^{r_3}$.
	
	\begin{equation}\label{e-d3}
		\begin{aligned}
			D_3(\al,M_1,M_2)=&\prod_{j=1}^{r_1}\prod_{k=1}^{r_3}\left(n_j+\al-m_k'\right)\prod_{j=1}^{r_2}\prod_{k=1}^{r_4}\left(m_j+\al-n_k'\right) \\
			&\times\prod_{j=1}^{r_1}\prod_{k=1}^{r_4}\left(n_j+n_k'+1\right)\prod_{j=1}^{r_2}\prod_{k=1}^{r_3}\left(m_j+m_k'+1\right).
		\end{aligned} 
	\end{equation}
\end{theo}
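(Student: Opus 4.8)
The plan is to reproduce the architecture of the proof of Theorem~\ref{t-new4.2}, adapted to the solution of the second kind and to the conjugate canonical position. First I would establish a second-kind analog of Corollary~\ref{c-0reduction}: four one-step reduction identities describing how $\Omega\ci{M_1,M_2}\left[\wt h^{\al}(x,\la)\right]$ changes when $0$ appears in one of the four encoding slots of $M_1$ or $M_2$. The manipulation is the one used in the proof of Corollary~\ref{c-0reduction}(b)---factor a common power and exponential out of every column via Eq.~\eqref{e-wronskian1}, expand about the column carrying the $0$-index seed function, and differentiate the surviving columns using Eq.~\eqref{e-derivatives}---but now the last column carries $\wt h$ rather than $h$. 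Its behavior is governed by the third identity of Eq.~\eqref{e-solnderivs}, which for $\wt h$ reads
\begin{align*}
\frac{d}{dx}\wt h^{\al}(x,\la)=-\al\,\wt h^{\al+1}(x,\la-1),
\end{align*}
so each reduction moves the solution parameters in the pattern $(\al,\la)\mapsto(\al\mp1,\la\pm1)$ and produces constants of the same shape as in Eqs.~\eqref{e-r10red}--\eqref{e-r30red}, with the $\al$-dependence transposed to match $\wt h$.

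Next I would iterate these identities to move $M_1$, and then by Assumption~\ref{a-mayaorder} $M_2$, into conjugate canonical position---the natural target for a second-kind solution, since $\wt h$ shares the type and derivative rule of the seed function Eq.~\eqref{e-eigen3}. The leading (non-product) constants accumulated along the way assemble into $D_1$ and $D_2$. As in Theorem~\ref{t-new4.2}, this is a bookkeeping argument in three cases according to the sign of $t_1'$ and of $t_2'$: for $t_1'<0$ one repeatedly clears filled boxes immediately right of the origin, while for $t_1'>0$ one splits into the two subcases (box immediately left of the origin filled or empty) exactly as in Cases~(i)--(ii) of that proof. The rising- and falling-factorial forms of Eqs.~\eqref{e-d1}--\eqref{e-d2} then emerge as the telescoped products of these leading constants, with the excluded indices $k_1'\notin\{n_j\}$ and $k_2'\notin\{m_j\}$ recording which shifts encounter a filled box.

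The genuinely new work is the product formula $D_3$ in Eq.~\eqref{e-d3}, since the reference \cite[Section~4.4]{BK} that supplies $C_3$ for Theorem~\ref{t-new4.2} has no conjugate canonical counterpart. The strategy is to isolate, at each elementary shift, the constants $c_j$ attached to the surviving seed-function columns---these come from differentiating the $f_j$ via Eq.~\eqref{e-derivatives} and are the same whether the last column is $h$ or $\wt h$---and to prove that the product collected over all shifts equals the four double products of Eq.~\eqref{e-d3}. I would argue by induction on $r$ (equivalently, on the number of reduction steps) that each ordered pair formed by a right-of-origin seed function (types \eqref{e-eigen1}--\eqref{e-eigen2}) and a left-of-origin seed function (types \eqref{e-eigen3}--\eqref{e-eigen4}) contributes exactly one factor---namely $(n_j+\al-m_k')$, $(m_j+\al-n_k')$, $(n_j+n_k'+1)$ or $(m_j+m_k'+1)$ according to the two types---with each box crossing the origin supplying one slice of these products; meanwhile the leading $\al$- and $\la$-dependent constants separate into $D_1,D_2$ and same-side interactions contribute only signs. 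A useful check, which also fixes the sign bookkeeping, is that these factors agree with those of $C_3$ up to the global sign $(-1)^{r_1r_3+r_2r_4}$, so that $D_3=C_3$ under the convention of Assumption~\ref{a-signs}.

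The main obstacle will be this $D_3$ identity: verifying that the products accumulated along the prescribed path to conjugate canonical form collapse to the closed form of Eq.~\eqref{e-d3} regardless of the shift magnitudes $t_1'$ and $t_2'$, with the $+\al$ appearing in the mixed products correctly tracked through the $(\al,\la)\mapsto(\al\mp1,\la\pm1)$ parameter shifts of $\wt h$. Once the reduction identities are in place the derivation of $D_1$ and $D_2$ is routine, so essentially the whole weight of the argument rests on this combinatorial product computation.
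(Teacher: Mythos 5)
Your proposal follows the paper's proof essentially step for step: the second-kind one-step reductions (the paper's Corollary~\ref{c-0reduction2}, proved exactly as you describe via the third identity of Eq.~\eqref{e-solnderivs}), the same case analysis on the signs of $t_1'$ and $t_2'$ to assemble $D_1$ and $D_2$, and an induction over the shifting process (the paper inducts on $T'=|t_1'|+|t_2'|$, which is what you mean by ``number of reduction steps''---note this is not the same as $r$) in which the cross-side products telescope to give $D_3$. Your observation that $D_3=(-1)^{r_1r_3+r_2r_4}C_3$ is correct and a useful sanity check the paper does not state, though be aware that in cases (c) and (d) of the reduction the spectral parameter $\la$ is in fact unchanged, so the pattern $(\al,\la)\mapsto(\al\mp1,\la\pm1)$ only describes two of the four shifts.
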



The constants from Theorem \ref{t-new4.22} are collectively denoted as 
\begin{align}\label{e-D}
D:=D_1(\al,\la,M_1)D_2(\al,\la,M_2)D_3(\al,M_1,M_2),
\end{align}
and will be used in Section \ref{s-mfunctions}.

The change in parameters and constants is now tracked for a shift of one unit in either direction for the Maya diagrams. These are similar to those in Corollary \ref{c-0reduction} but with extra contributions emerging from $\wt{h}$.

\begin{cor}\label{c-0reduction2}
Let $M_1$ and $M_2$ be given by Eqs.~\eqref{e-generalizedmaya1} and \eqref{e-generalizedmaya2}.
\begin{itemize}
    \item[(a)] If $n_{r_1}=0$, then
    \begin{align}\label{e-r10red2}
    \Omega\ci{M_1,M_2}\left[\wt{h}^{\al}(x,\la)\right]=\left((-\al)\prod_{i=1}^{r_3}(m_i'-\al)\prod_{i=1}^{r_4}(n_i'+1)\right)\Omega\ci{M_1-1,M_2}\left[\wt{h}^{\al+1}(x,\la-1)\right].
    \end{align}
    
    \item[(b)] If $n_{r_4}'=0$, then
    \begin{align}\label{e-r40red2}
    \Omega\ci{M_1,M_2}\left[\wt{h}^{\al}(x,\la)\right]=\left(\frac{\la+1}{1-\al}\prod_{i=1}^{r_1}(n_i+1)\prod_{i=1}^{r_2}(m_i+\al)\right)\Omega\ci{M_1+1,M_2}\left[\wt{h}^{\al-1}(x,\la+1)\right].
    \end{align}
    
    \item[(c)] If $m_{r_2}=0$, then
    \begin{align}\label{e-r20red2}
    \Omega\ci{M_1,M_2}\left[\wt{h}^{\al}(x,\la)\right]=\left((-\al)\prod_{i=1}^{r_3}(m_i'+1)\prod_{i=1}^{r_4}(n_i'-\al)\right)\Omega\ci{M_1,M_2-1}\left[\wt{h}^{\al+1}(x,\la)\right]. 
    \end{align}
    
    \item[(d)] If $m_{r_3}'=0$, then
    \begin{align}\label{e-r30red2}
    \Omega\ci{M_1,M_2}\left[\wt{h}^{\al}(x,\la)\right]=\left(\frac{-\la-\al}{1-\al}\prod_{i=1}^{r_1}(n_i+\al)\prod_{i=1}^{r_2}(m_i+1)\right)\Omega\ci{M_1,M_2+1}\left[\wt{h}^{\al-1}(x,\la)\right].
    \end{align}
\end{itemize}
\end{cor}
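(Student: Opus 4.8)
The plan is to run the argument in exact parallel with Corollary \ref{c-0reduction}, replacing the solution of the first kind $h$ by the solution of the second kind $\wt{h}$ in the final column of the Wronskian. The key structural observation is that the first $r$ columns are the seed functions $f_1,\dots,f_r$, which are identical in the two settings. Hence, after factoring out the appropriate exponential/power via Eq.~\eqref{e-wronskian1} so that the $0$-indexed seed function becomes a constant column, expanding about that column, and differentiating the surviving seed columns, one reproduces \emph{verbatim} the scalar constants $c_j$ and shifted functions $\wt{f}_j$ from the proof of Corollary \ref{c-0reduction}. Consequently the product factors in Eqs.~\eqref{e-r10red2}--\eqref{e-r30red2} are forced to equal those in Eqs.~\eqref{e-r10red}--\eqref{e-r30red}, and the only genuinely new computation in each case is the derivative of the final $\wt{h}$ column.

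First I would treat case (b) as the template, copying the proof of Corollary \ref{c-0reduction}(b) through the factoring of $e^x x^{-\al}$. At the final column one must compute $\frac{d}{dx}\big(e^{-x}x^{\al}\wt{h}^{\al}(x,\la)\big)$. Writing $\wt{h}^{\al}(x,\la)=x^{-\al}M(-\la-\al,1-\al,x)$ and applying Kummer's transformation $M(a,b,x)=e^{x}M(b-a,b,-x)$ collapses the expression to $\frac{d}{dx}M(1+\la,1-\al,-x)$; the first identity of Eq.~\eqref{e-solnderivs} with the chain rule then yields the single term $-\tfrac{\la+1}{1-\al}M(2+\la,2-\al,-x)$, and a second Kummer transformation identifies this, up to the factor $\tfrac{\la+1}{1-\al}$, with $e^{-x}x^{\al-1}\wt{h}^{\al-1}(x,\la+1)$, reproducing Eq.~\eqref{e-r40red2}. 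Cases (a) and (d) are analogous but simpler: there the relevant column reduces to $\wt{h}^{\al}=x^{-\al}M(-\la-\al,1-\al,x)$ (case (a), with no factoring since $f_{r_1}=L^{\al}_0\equiv1$ is already constant) or to $x^{\al}\wt{h}^{\al}=M(-\la-\al,1-\al,x)$ (case (d)), so the third and first identities of Eq.~\eqref{e-solnderivs} apply directly and produce the single factors $-\al$ and $\tfrac{-\la-\al}{1-\al}$ with no recombination needed.

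The main obstacle is case (c). Here the $0$-indexed seed function is $e^{x}L^{\al}_0(-x)=e^{x}$, so one factors $e^{x}$ and must differentiate $e^{-x}\wt{h}^{\al}(x,\la)=x^{-\al}M(1+\la,1-\al,-x)$. Because the surviving $x^{-\al}$ prefactor no longer cancels the exponential, the product rule now produces \emph{two} confluent-hypergeometric terms instead of one. To close the gap I would invoke the three-term contiguous relation
\begin{align*}
M(a,c,z)=M(a,c+1,z)+\frac{az}{c(c+1)}M(a+1,c+2,z),
\end{align*}
specialized to $a=\la+1$, $c=-\al$, $z=-x$ and derived from the standard contiguous relations for ${}_1F_1$, to recombine the two terms into $-\al\,x^{-\al-1}M(\la+1,-\al,-x)$; a final Kummer transformation identifies this with $-\al\,e^{-x}\wt{h}^{\al+1}(x,\la)$, giving the factor of Eq.~\eqref{e-r20red2}. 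Throughout, the sign ambiguities introduced by the cofactor expansions and the derivative identities are absorbed by Assumption \ref{a-signs}, so each displayed factor is asserted only up to a sign. Since the seed columns --- and hence the product factors --- coincide with those of Corollary \ref{c-0reduction}, these four reductions are exactly the second-kind analogues required to iterate toward Theorem \ref{t-new4.22}.
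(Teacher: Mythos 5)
Your proposal is correct and follows essentially the same route as the paper, whose proof of Corollary \ref{c-0reduction2} consists precisely of the remark that one repeats the argument of Corollary \ref{c-0reduction} and replaces the derivative of the final column by the appropriate identity from Eq.~\eqref{e-solnderivs}; your observation that the seed-function columns, and hence the product factors, are untouched is the key point, and the four new scalar factors you compute all agree with Eqs.~\eqref{e-r10red2}--\eqref{e-r30red2}. The only stylistic difference is in case (c), where your three-term contiguous relation is valid but avoidable: the needed derivative $\frac{d}{dx}\left(x^{b-1}M(a,b,-x)\right)=(b-1)x^{b-2}M(a,b-1,-x)$ is just the third identity of Eq.~\eqref{e-solnderivs} read as a power-series identity in its argument, so case (c) is no harder than cases (a) and (d).
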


\begin{proof}
It is sufficient to use the derivative formulas in Eq.~\eqref{e-solnderivs} to calculate the changed parameters and constants in each of the situations. Otherwise, the proof is completely analogous to that of Corollary \ref{c-0reduction}.
\end{proof}

Verifying Eqs.~\eqref{e-d1} and \eqref{e-d2} follows the same procedure as the proof of Theorem \ref{t-new4.2}, but relies on the equations found within Corollary \ref{c-0reduction2}.  The method also differs slightly from that of Theorem \ref{t-new4.2} as the Maya diagrams are to be shifted into the conjugate canonical form. As no existing result addresses the formulation of Eq.~\eqref{e-d3}, the proof is included below.  The method closely follows the result of \cite[Section 4.4]{BK}.

\begin{proof}[Proof of Theorem \ref{t-new4.22}]
	We note that the shifting procedure for $t_1<0$ depends on the Maya diagram for $M_1$ and whether the box of immediately to the right of the origin is or is not filled.  This presents two cases:
	\begin{itemize}
		\item[(i)] If the box to the right of the origin is filled then $n_{r_1}=0$, $\wt{r}_1=r_1-1$, $\wt{r}_4=r_4$, and
		\begin{align}\label{e-firstcase}
			\wt{M}_1:\hspace{1em}\left(n_1'+1,\dots,n_{r_4}'+1~|~n_{r_1-1}-1,\dots,n_{r_1}-1\right).
		\end{align}
		\item[(ii)] If the box to the right of the origin is empty then $\wt{n}_{r_4+1}'=0$, $\wt{r}_4=r_4+1$, $\wt{r}_1=r_1$, and
		\begin{align}\label{e-secondcase}
			\wt{M}_1:\hspace{1em}\left(n_1'+1,\dots,n_{r_4}'+1,0~|~n_1-1,\dots,n_{r_1}-1\right).
		\end{align}
	\end{itemize}  
	In Case (ii), Eq.~\eqref{e-r40red2} is rewritten as
	\begin{align}\label{e-rewrittencase}
	\Omega\ci{M_1,M_2}\left[\wt{h}^{\al}(x,\la)\right]=\left[\left(\frac{\la+1}{-\al}\right)\prod_{i=1}^{r_1}n_i\prod_{i=1}^{r_2}(m_i+\al+1)\right]^{-1}\Omega\ci{M_1-1,M_2}\left[\wt{h}^{\al+1}(x,\la)\right],
	\end{align} before application.
	
	Eq.~\eqref{e-d3} will be shown using induction on the value of $T'=|t_1'|+|t_2'|$. For $T'=0$, no shift of either $M_1$ or $M_2$ is required.  As a result of $r_1=r_2=0$, all products are empty and equal to 1.  Therefore $D_3=1$ and the base case is shown.  Let $T'>0$ and assume that Eq.~\eqref{e-d3} holds whenever $|t_1'|+|t_2'|=T'-1$. Considering now $|t_1'|+|t_2'|=T'$, there are a number of cases that may occur. When both $t_1',t_2'>0$, as in the case when $T'=0$, $r_1=r_2=0$ and all products are equal to 1. Therefore, we assume that either $t_1'<0$ or $t_2'<0$. 
	Without loss of generality, consider $t_1'<0$.  The following cases can happen:
	\begin{itemize}
		\item[(i)] $t_1'<0$ and $\wt{M}_1-1$ is given by Eq.~\eqref{e-firstcase} and
		\item[(ii)] $t_1'<0$ and $\wt{M}_1-1$ is given by Eq.~\eqref{e-secondcase}.
	\end{itemize}
    The treatment of each case is similar, so we will prove only Case (ii).  In this situation, Eq.~\eqref{e-rewrittencase} holds and
	\begin{align}\label{e-somecancel}
		D_3(\al,M_1,M_2)=D_3(\al+1,M_1-1,M_2)\left(\prod_{i=1}^{r_1}n_i\prod_{i=1}^{r_2}(m_i+\al+1)\right)^{-1}. 
	\end{align}
	The induction hypothesis states
	\begin{align*}
		D_3(\al+1,M_1-1,M_2)=&\prod_{j=1}^{\wt{r}_1}\prod_{k=1}^{r_3}\left(\wt{n}_j+(\al+1)-m_k'\right)\prod_{j=1}^{r_2}\prod_{k=1}^{\wt{r}_4}\left(m_j+(\al+1)-\wt{n}_k'\right) \\
		&\quad \times\prod_{j=1}^{\wt{r}_1}\prod_{k=1}^{r_4}\left(\wt{n}_j+\wt{n}_k'+1\right)\prod_{j=1}^{r_2}\prod_{k=1}^{r_3}\left(m_j+m_k'+1\right).
	\end{align*}
	The first three double products can be rewritten as 
	\begin{align*}
		\prod_{j=1}^{\wt{r}_1}\prod_{k=1}^{r_3}\left(\wt{n}_j+(\al+1)-m_k'\right)=&\prod_{j=1}^{r_1}\prod_{k=1}^{r_3}\left(n_j+\al-m_k'\right), \\
		\prod_{j=1}^{r_2}\prod_{k=1}^{\wt{r}_4}\left(m_j+(\al+1)-\wt{n}_k'\right)=&\prod_{j=1}^{r_2}\prod_{k=1}^{r_4}\left(m_j+\al-n_k'\right)\prod_{j=1}^{r_2}\left(m_j+\al+1\right), \\
		\prod_{j=1}^{\wt{r}_1}\prod_{k=1}^{r_4}\left(\wt{n}_j+\wt{n}_k'+1\right)=&\prod_{j=1}^{r_1}\prod_{k=1}^{r_4}\left(n_j+n_k'+1\right).
	\end{align*}
	The extra products now cancel in Eq.~\eqref{e-somecancel} and the formula for $D_3$ in Eq.~\eqref{e-d3} holds. 
\end{proof}

Theorems \ref{t-new4.2} and \ref{t-new4.22} can both be simplified in the case where only the generalized Laguerre polynomial given by Eq.~\eqref{e-particularlaguerre} is considered, i.e.~there is no general solution to $\ell^{\al}$ in the last column of the Wronskian.  This is precisely Eq.~\eqref{e-particularlaguerre}. The constants generated by shifting this generalized Laguerre polynomial appear in Section \ref{s-mfunctions} but will not affect any spectral properties of the exceptional operators; they are identified here for convenience. Note that the first half of the result was shown in \cite[Theorem 4.2]{BK}.

\begin{cor}\label{c-simple4.2}
Let $f_1,\dots,f_r$ be as in Eqs.~\eqref{e-eigen1}-\eqref{e-eigen4}, $M_1$ and $M_2$ be as in Eqs.~\eqref{e-generalizedmaya1} and \eqref{e-generalizedmaya2}, $\mu$ and $\nu$ be their associated partitions after shifts $t_1$ and $t_2$ to canonical position, and $\mu'$, $\nu'$ be their associated partitions after shifts $t_1'$ and $t_2'$ to conjugate canonical position. Recall that $C_3$ and $D_3$ are defined by Eqs.~\eqref{e-c3} and \eqref{e-d3}, respectively. Then,
\begin{align*}
    \Omega\ci{M_1,M_2}^\al(x)=C_3(\al,M_1,M_2)\Omega_{\mu,\nu}^{\al-t_1-t_2}(x)=D_3(\al,M_1,M_2)\Omega_{\mu',\nu'}^{\al-t_1'-t_2'}(x).
\end{align*}
\end{cor}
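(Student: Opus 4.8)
The plan is to recognize that the generalized Laguerre polynomial $\Omega\ci{M_1,M_2}^\al(x)$ of Eq.~\eqref{e-particularlaguerre} is obtained from the solution-of-first-kind Wronskian $\Omega\ci{M_1,M_2}\left[h^\al(x,\la)\right]$ of Eq.~\eqref{e-firstgen} simply by deleting the final column containing $h^\al$, and likewise from the solution-of-second-kind Wronskian of Eq.~\eqref{e-secondgen} by deleting the final $\wt h^\al$ column. Consequently the very same sequence of single-unit shifts driving the proofs of Theorems \ref{t-new4.2} and \ref{t-new4.22} can be run verbatim on the seed-function Wronskian $\Wr[f_1,\dots,f_r]$, and the whole argument reduces to identifying which of the accumulated coefficients survive the deletion of that last column.

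First I would revisit the reduction formulas of Corollaries \ref{c-0reduction} and \ref{c-0reduction2}. In each of the four cases the coefficient splits into a \emph{product} factor, built from the constants $c_j$ produced by differentiating the seed columns via Eqs.~\eqref{e-derivatives}, and a single \emph{non-product} factor (the $\la$-dependent piece, such as the $\al$ in Eq.~\eqref{e-r40red} or the $(\la+1)/(1-\al)$ in Eq.~\eqref{e-r40red2}) produced by differentiating the $h$ or $\wt h$ column via Eqs.~\eqref{e-solnderivs}. For the generalized Laguerre polynomial there is no such last column to differentiate: after removing the common factor and expanding about the column that becomes the constant $1$, every remaining column is a seed function, so only the $c_j$-products are generated. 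Aggregating these products over all the shifts reproduces precisely $C_3$ in canonical position and $D_3$ in conjugate canonical position, exactly as in Eqs.~\eqref{e-c3} and \eqref{e-d3}, while the factors $C_1,C_2$ and $D_1,D_2$---which are assembled solely from the omitted non-product pieces---never appear.

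I would then confirm that the prefactor bookkeeping still closes with the reduced exponent. Running Case~(b) of Corollary \ref{c-0reduction} with the polynomial prefactor $e^{-(r_2+r_4)x}x^{(\al+r_1+r_2)(r_3+r_4)}$ in place of the first-kind prefactor, one removes $(e^x x^{-\al})^{r}$ and $(e^{-x}x^{\al-1})^{r-1}$ and checks the exponent identity
\begin{align*}
(\al+r_1+r_2)(r_3+r_4)-\al-r+1=(\al-1+r_1+r_2)(r_3+r_4-1)=(\wt\al+\wt r_1+\wt r_2)(\wt r_3+\wt r_4),
\end{align*}
which again lands on the polynomial prefactor for $\wt M_1=M_1+1$ with $\wt\al=\al-1$; the analogous identities hold for the other three cases and for the conjugate shifts. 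This is the only place where the arithmetic departs from Theorems \ref{t-new4.2} and \ref{t-new4.22}, and it is mild. With the coefficient identification and the prefactor identity in hand, iterating the shifts until $M_1,M_2$ reach canonical (resp.\ conjugate canonical) position yields the two claimed equalities; the canonical-position half is exactly \cite[Theorem 4.2]{BK}, and the conjugate-position half follows by the identical argument built on Theorem \ref{t-new4.22}.

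I expect the main obstacle to be organizational rather than mathematical: keeping the sign conventions of Assumption \ref{a-signs} consistent, and verifying that the exponent identity above closes in all four reduction cases and in both the canonical and conjugate shifting regimes. No new cancellation or structural input is needed beyond what is already established for the solutions of the first and second kind.
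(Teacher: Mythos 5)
Your proposal is correct and follows essentially the same route as the paper: the paper's proof likewise observes that $C_1,C_2$ (and $D_1,D_2$) arise solely from differentiating the $h$ (resp.\ $\wt h$) column, so that deleting that column leaves only the cumulative product factors $C_3$ and $D_3$. Your additional verification of the prefactor-exponent identity is a useful detail the paper leaves implicit, but it does not change the argument.
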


\begin{proof}
The terms $C_1$ and $C_2$ from Theorem \ref{t-new4.2} are clearly produced solely by the solution $h^\al(x,\la)$ in the last column of the Wronskian when the shifts to canonical position are performed. When this last column is removed, the cumulative effect is only determined by $C_3$. Analogously, $D_3$ is the cumulative effect of shifting the Maya diagrams to conjugate canonical position. 
\end{proof}

\section{Initial Condition Normalizations}\label{s-normalizations}

Theorems \ref{t-new4.2} and \ref{t-new4.22} allow for solutions of the first and second kind to be written in terms of Wronskians indexed by partitions. This subsection will extract information from these simplified Wronskians to determine how to form a fundamental system of solutions ($u_1(x,\la)$ and $u_2(x,\la)$) for $(\cL-\la)f=0$ that is properly normalized according to Eq.~\eqref{e-initialconditions}. We leave the exact definitions of this system of solutions and the relevant quasi-derivatives to Section \ref{s-mfunctions}; now it is enough to calculate the value of these polynomial solutions at $0$. 

It is necessary to recall some notation and identities. Classical Laguerre polynomials satisfy
\begin{align*}
    L^{\al}_n(0)=\frac{(\al+1)^{(n)}}{n!}=\frac{(\al+1)(\al+2)\cdots(\al+n)}{n!},
\end{align*}
for all $n\in \mathbb \ZZ^+$. Additionally, $L^{\al}_0(0)=1$ and we set $L^{\al}_n(0)=0$ when $n \in \mathbb Z^-$. The Vandermonde determinant will be denoted as
\begin{align}\label{e-vandermonde}
    \Delta(a_1,a_2,\dots,a_r)=\prod_{i<j}(a_j-a_i).
\end{align}
Partitions will be defined via Eqs.~\eqref{e-canonicalpartition1} and \eqref{e-canonicalpartition2} and corresponding indices, e.g.~$n_i'$, used here do not correspond to the starting general Maya diagram for solutions of the first kind, but to the indices of the associated canonical position Maya diagram (or conjugate canonical position when solutions of the second kind are discussed). Note that in canonical form, $r_1=r_2=0$ so there are no $n$ or $m$-terms in the Maya diagram; and in the conjugate canonical form, $r_3=r_4=0$ so there are no $n'$ or $m'$-terms in the Maya diagram.

Solutions of the first kind, after cancelling some terms with the prefactor and using the derivative rules in Eqs.~\eqref{e-derivatives} and \eqref{e-solnderivs}, can be written as
\begin{align*}
\Omega_{\mu,\nu}\left[h^{\al}(x,\la)\right]=\begin{vmatrix}
		L^{\alpha}_{n_1}(x) &\dots &L^{\alpha}_{m_{r_2}}(-x) &M(-\la,\al+1,x)
		\\[5pt]
		(-1) L^{\alpha+1}_{n_1-1}(x) &\dots 
		&L^{\alpha+1}_{m_{r_2}}(-x) &\frac{-\la}{\al+1}M(-\la+1,\al+2,x)\\[5pt]
		\vdots &\ddots & \vdots &\vdots \\[5pt]
		(-1)^{r}L^{\alpha+r}_{n_1-r}(x) 
		&\dots 
		&L^{\alpha+r}_{m_{r_2}}(-x) &\frac{(-\la)^{(r)}}{(\al+1)^{(r)}}M(-\la+r-1,\al+r,x)
	\end{vmatrix}.
\end{align*}
Recall that $r=r(\mu)+r(\nu)$. It may occur that $n_i<r$ for some $i$, which would mean that some of the entries in the column are $0$. For this reason, let $s>0$ denote the smallest natural number such that $n_i<r$ for all $i\geq s$ and $n_i\geq r$ for all $i<s$, if it exists. If such an $s$ does not exist, we set $s=r(\mu)+1$. The sequence $\{n_i\}_{i=1}^{r(\mu)}$ is assumed to be strictly decreasing so the Wronskian has distinct entries and thus is guaranteed to have full rank. Otherwise, the generalized associated Laguerre polynomial is the constant function and we are in the setting of the classical Laguerre differential operator. Likewise, we let $s'>0$ be the smallest natural number such that $m_j<r$ for all $j\geq s'$ and $m_j\geq r$ for all $j<s'$. If such an $s'$ does not exist, set $s'=r(\nu)+1$.

The following result can be viewed as a generalization of \cite[Lemma 5.1]{D}. Recall that Assumptions \ref{a-partitions} and \ref{a-signs} hold.

\begin{theo}\label{t-first0}
Given partitions $\mu$ and $\nu$ defined via Eqs.~\eqref{e-canonicalpartition1} and \eqref{e-canonicalpartition2} and $s$, $s'$ defined above, then, $\Omega_{\mu,\nu}\left[h^{\al}(0,\la)\right]$ is equal to 
\begin{align*}
    \frac{\prod\limits_{k=1}^{r+1}(\alpha+k)^{(r+1-k)}
	\cdot
	\prod\limits_{i=1}^{s-1} (\alpha+r+1)^{(n_i-r)}
	\cdot 
	\prod\limits_{j=1}^{s'-1}(\alpha+r+1)^{(m_j-r)}
	\cdot
	\Delta(n_{\mu},m_{\nu},-\la)}
	{(\al+1)^{(r)}
	\cdot
	\prod\limits_{i=s}^{r(\mu)}(\alpha+1+n_i)^{(r-n_i)}
	\cdot 
	\prod\limits_{j=s'}^{r(\nu)}(\alpha+1+m_j)^{(r-m_j)}
	\cdot
	\prod\limits_{i=1}^{r(\mu)} n_i! 
	\cdot 
	\prod\limits_{j=1}^{r(\nu)} m_j!},
\end{align*}
where 
\begin{align*}
\Delta(n_{\mu},m_{\nu},\la)=\Delta(-n_{r(\mu)},\dots,-n_2,-n_1,\alpha+1+m_1,\alpha+1+m_2,\dots,\alpha+1+m_{r(\nu)},-\la).
\end{align*}
\end{theo}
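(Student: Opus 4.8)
The plan is to evaluate the displayed $(r+1)\times(r+1)$ determinant at $x=0$ and to recognize it, after one uniform column factorization, as a Vandermonde determinant. First I would substitute the closed forms $L^{\beta}_j(0)=\binom{\beta+j}{j}=(\beta+1)^{(j)}/j!$ (with the convention $L^{\beta}_j(0)=0$ for $j<0$) and $M(a,b,0)=1$ into the matrix written just above the statement, whose $(k,\ell)$ entry records the $(k-1)$-st derivative of the $\ell$-th column at $0$. This turns the determinant into a purely combinatorial object: in row $k$ a type-1 column of degree $n$ contributes $(-1)^{k-1}L^{\al+k-1}_{n-k+1}(0)$, a type-2 column of degree $m$ contributes $L^{\al+k-1}_{m}(0)$, and the solution column contributes $(-\la)^{(k-1)}/(\al+1)^{(k-1)}$, all consistent with the derivative identities of Eqs.~\eqref{e-derivatives} and \eqref{e-solnderivs} that already produced the displayed form.

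The crucial observation is that all three column types collapse onto one monic polynomial basis sharing the same row weights. Writing every entry through Gamma functions, one checks that the $(k,\ell)$ entry equals $\tfrac{1}{\Gamma(\al+k)}\,c_\ell\,x_\ell^{(k-1)}$, where $x^{(k-1)}=x(x+1)\cdots(x+k-2)$ is the rising factorial (monic of degree $k-1$), and where the Vandermonde node $x_\ell$ and column constant $c_\ell$ are
\[
x_\ell=-n_i,\ c_\ell=\tfrac{\Gamma(\al+n_i+1)}{n_i!};\qquad x_\ell=\al+1+m_j,\ c_\ell=\tfrac{\Gamma(\al+m_j+1)}{m_j!};\qquad x_\ell=-\la,\ c_\ell=\Gamma(\al+1),
\]
for the type-1, type-2 and solution columns respectively. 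The identities $(\al+k)^{(m)}=\Gamma(\al+k+m)/\Gamma(\al+k)$ and $(-n)^{(k-1)}=(-1)^{k-1}n!/(n-k+1)!$ make these equalities transparent; the Gamma-function form is exactly what keeps the factorization valid in the degenerate rows where the original entry vanishes (precisely when $n_i<k-1$, matching the vanishing of $(-n_i)^{(k-1)}$), so no separate treatment is required. Pulling the common row factors $\prod_{k=1}^{r+1}1/\Gamma(\al+k)$ and the column constants $\prod_\ell c_\ell$ out of the determinant leaves $\det\big[x_\ell^{(k-1)}\big]_{k,\ell}$, a generalized Vandermonde with monic rows, which equals $\Delta(x_1,\dots,x_{r+1})$; ordering the nodes as in the statement gives $\Delta(n_\mu,m_\nu,-\la)$ up to a reordering sign that Assumption \ref{a-signs} lets us discard.

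What remains, and what I expect to be the main obstacle, is to match the scalar prefactor $\prod_k 1/\Gamma(\al+k)\cdot\Gamma(\al+1)\cdot\prod_i \Gamma(\al+n_i+1)/n_i!\cdot\prod_j\Gamma(\al+m_j+1)/m_j!$ against the rising-factorial expression in the statement. Using $(\al+k)^{(r+1-k)}=\Gamma(\al+r+1)/\Gamma(\al+k)$ one rewrites $\prod_{k=1}^{r+1}(\al+k)^{(r+1-k)}/(\al+1)^{(r)}=\Gamma(\al+r+1)^{r}/\prod_{k=2}^{r+1}\Gamma(\al+k)$, which supplies all the row weights together with a power of $\Gamma(\al+r+1)$. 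Each factor $\Gamma(\al+n_i+1)$ is then absorbed by pairing one copy of $\Gamma(\al+r+1)$ with a single rising factorial: for $n_i\ge r$ (that is $i<s$) one uses $(\al+r+1)^{(n_i-r)}=\Gamma(\al+n_i+1)/\Gamma(\al+r+1)$, placing it in the numerator, while for $n_i<r$ (that is $i\ge s$) one uses $(\al+1+n_i)^{(r-n_i)}=\Gamma(\al+r+1)/\Gamma(\al+n_i+1)$, placing it in the denominator; the integers $s,s'$ exist precisely to record this split and to keep every rising-factorial exponent non-negative. The identical argument applied to the $m_j$ with $s'$ handles the type-2 factors, and collecting the $r(\mu)+r(\nu)=r$ resulting copies of $\Gamma(\al+r+1)$ reproduces the $\Gamma(\al+r+1)^r$ above. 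This matches the claimed constant exactly, completing the proof along the lines of \cite[Lemma 5.1]{D}, which treats the analogous determinant without the extra solution column and the second partition.
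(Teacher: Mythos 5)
Your proposal is correct, and it follows the same global strategy as the paper's proof: evaluate the displayed determinant at $x=0$, strip off row factors and column factors, recognize what remains as a Vandermonde determinant, and then reconcile the extracted constants with the stated rising-factorial products via the split at $s$ and $s'$. The one substantive difference is in how the factorization is organized. The paper separates the matrix into two submatrices (the $\mu$-columns together with the solution column, and the $\nu$-columns), performs column operations on each with an explicit case split at $i<s$ versus $i\geq s$, recombines, performs row operations, and then has to argue informally that the block of zeros in the bottom-left corner does not disturb the standard Vandermonde reduction. You instead factor every entry uniformly as $\Gamma(\al+k)^{-1}\,c_\ell\,x_\ell^{(k-1)}$, so that the vanishing of the degenerate entries (those with $n_i<k-1$) is automatic from the vanishing of $(-n_i)^{(k-1)}$, the determinant is manifestly a generalized Vandermonde in the monic rising-factorial basis, and the $s,s'$ case analysis is deferred to pure bookkeeping at the end when Gamma quotients are rewritten as rising factorials with non-negative exponents. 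Your version buys a cleaner, case-free identification of the Vandermonde structure; the paper's version stays closer to the column-operation language of \cite[Lemma 5.1]{D} and keeps the two partition blocks visibly separate. I verified the entrywise identities and the final constant matching in your argument (both sides reduce to $\Gamma(\al+1)\prod_i\Gamma(\al+n_i+1)\prod_j\Gamma(\al+m_j+1)\big/\big(\prod_{k=1}^{r+1}\Gamma(\al+k)\prod_i n_i!\prod_j m_j!\big)$ times the Vandermonde), and they check out, with the column-reordering sign absorbed by Assumption \ref{a-signs} exactly as you say.
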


\begin{proof}
Separate the Wronskian $\Omega_{\mu, \emptyset}\left[h^{\al}(0,\la)\right]$ into two submatrices, one with columns given by seed functions indexed by $\mu$ and the solution $h^{\al}(0,\la)$; the other with columns given by seed functions indexed by $\nu$.  Each submatrix has $r+1$ rows. Splitting the Wronskian $\Omega_{\mu, \emptyset}\left[h^{\al}(0,\la)\right]$ into these submatrices allows us to conveniently perform necessary column operations to each submatrix and then recombine the two submatrices before performing row operations. The resulting Wronskian will be a Vandermonde determinant and allow for $\Omega_{\mu,\nu}\left[h^{\al}(0,\la)\right]$ to be evaluated. 

Begin by fixing $s>0$ as above and considering the truncated Wronskian with columns given by seed functions indexed by $\mu$ and the solution $h^{\al}(0,\la)$.  This matrix has $r(\mu)$ columns and $r+1$ rows. By pulling out factors of $(-1)$, rearrange the columns so that the indices of the $n_i$ are in ascending order.  We find that this truncated Wronskian is equal to 
\begin{equation*}
			\begin{vmatrix}
			\frac{(\alpha+1)^{(n_{r(\mu)})}}{n_{r(\mu)}!} & \dots & \frac{(\alpha+1)^{(n_{s})}}{n_{s}!} & \dots & \frac{(\alpha+1)^{(n_1)}}{n_1!} & 1
			\\
			\vdots & &  &  & & 
			\\
			(-1)^{n_{r(\mu)}+1}  &  & \vdots & & &  
			\\
			0 & \ddots &  & & \vdots & \vdots  
			\\
			 & & (-1)^{s+1} & & & 
			\\
			\vdots & & 0 & \ddots &  &  
			\\
			 &  & \vdots &  &  & 
			\\
			0 & \dots & 0 &  \dots & (-1)^{r}\frac{(\alpha+r+1)^{(n_1-r)}}{(n_1-r)!} & (-1)^{r}\frac{(-\la)^{(r)}}{(\al+1)^{(r)}}
		\end{vmatrix}.
\end{equation*}
Using column operations, we can remove a factor of $1/(\al+1)^{(r)}$ from the last column, $(\al+r+1)^{(n_i-r)}/(n_i)!$ from columns where $i<s$ and $1/(\al+1+n_i)^{(r-n_i)}(n_i)!$ from columns where $i\geq s$. This leaves the Wronskian as 
\begin{equation*}
	\setlength\arraycolsep{2pt}
	\begin{vmatrix}
			(\alpha+1)^{(r)} & \dots & (\alpha+1)^{(r)} & \dots & (\alpha+1)^{(r)} & (\al+1)^{(r)}
			\\
			\vdots & & &  &  & 
			\\
			(\al+1+n_{r(\mu)})^{(r-n_{r(\mu)})}(-n_{r(\mu)})!  &  & \vdots & & & 
			\\
			0 & \ddots & & & \vdots & \vdots  
			\\
			 & & (\al+1+n_{s})^{(r-n_{s})}(-n_{s})! & & & 
			\\
			\vdots &  & 0 & \ddots&  & 
			\\
			 &  & \vdots &  &  &  
			\\
			0 & \dots & 0 & \dots & (-n_1)^{(r)} & (-\la)^{(r)}
		\end{vmatrix}.
\end{equation*}
Now using row operations, we can remove a factor of $(\al+k)^{(r+1-k)}$ from each row $k$ where $k=1,\dots,r+1$. Collecting the terms removed from this truncated Wronskian yields
\begin{equation*}
	\frac{\prod\limits_{k=1}^{r+1}(\alpha+k)^{(r+1-k)}
	\cdot
	\prod\limits_{i=1}^{s-1} (\alpha+r+1)^{(n_i-r)}}
	{(\al+1)^{(r)}
	\cdot
	\prod\limits_{i=s}^{r(\mu)}(\alpha+1+n_i)^{(r-n_i)}
	\cdot
	\prod\limits_{i=1}^{r(\mu)}n_i!}
	\setlength\arraycolsep{1pt}
	\begin{vmatrix}
		1 &  \dots & 1 & 1 \\
		\vdots &  &  &  \\
		(n_{r(\mu)})! &  &&  \\
		0 & \ddots&  \vdots & \vdots \\
		\vdots & &  &  \\
		0 & \dots & (-n_1)^{(r)} & (-\la)^{(r)}
	\end{vmatrix}.
\end{equation*}
Applying elementary row operations, the remaining determinant equals a partially simplified Vandermonde determinant with zeros in the bottom left corner. 
A standard proof of the Vandermonde determinant formula in Eq.~\eqref{e-vandermonde} begins with the last row and subtracts the previous row multiplied by $-n_{r(\mu)}$. Expanding about the first column and removing common factors yields a smaller Vandermonde matrix. The zeros in the bottom left corner of our Wronskian do not change this process and thus will not effect Eq.~\eqref{e-vandermonde} when the determinant is eventually computed. 

Now, consider the second truncated Wronskian with columns given by seed functions indexed by $\nu$ and $r+1$ rows. Set $s'>0$ as above. Fortunately, there will be no zeros in the Wronskian, but the value $s'$ will still play a role in removing terms.  Calculations the show that the evaluation of this Wronskian at $0$ is equal to 
\begin{equation*}
	\begin{vmatrix}
		\frac{(\alpha+1)^{(m_1)}}{m_1!} & \frac{(\alpha+1)^{(m_2)}}{m_2!} & \dots & \frac{(\alpha+1)^{(m_{r(\nu)})}}{m_{r(\nu)}!} \\
		\frac{(\alpha+2)^{(m_1)}}{m_1!} & \frac{(\alpha+2)^{(m_2)}}{m_2!} & \dots & \frac{(\alpha+2)^{(m_{r(\nu)})}}{m_{r(\nu)}!} \\
		\vdots & \vdots & \ddots & \vdots \\
		\frac{(\alpha+r+1)^{(m_1)}}{m_1!} & \frac{(\alpha+r+1)^{(m_2)}}{m_2!} & \dots & \frac{(\alpha+r+1)^{(m_{r(\nu)})}}{(m_{r(\nu)})!} 
    \end{vmatrix}.
\end{equation*}
Using column operations, we can remove a factor of $(\alpha+r+1)^{(m_j-r)}/m_j!$ from all columns where $j<s'$ and a factor of $1/(\alpha+1+m_j)^{(r-m_j)}(m_j)!$ from remaining columns. The Wronskian is then
\begin{equation*}
	\begin{vmatrix}
		(\alpha+1)^{(r)}\cdot 1 & \dots & (\alpha+1)^{(r)}\cdot 1 \\
		(\alpha+2)^{(r-1)}(\alpha+1+m_1) & \dots & (\alpha+2)^{(r-1)}(\alpha+1+m_{r(\nu)}) \\
		\vdots & \ddots & \vdots \\
		1\cdot(\alpha+1+m_1)^{(r)}& \dots & 1\cdot(\alpha+1+m_{r(\nu)})^{(r)}
	\end{vmatrix}.
\end{equation*}
Now remove a factor of $(\al+k)^{(r+1-k)}$ from each row $k=1,\dots,r+1$. Altogether, the second truncated Wronskian equals
\begin{equation*}
	\frac{\prod\limits_{k=1}^{r+1}(\al+k)^{(r+1-k)}
	\prod\limits_{j=1}^{s'-1}(\alpha+r+1)^{(m_j-r)}}
	{\prod\limits_{j=s'}^{r(\nu)}(\alpha+1+m_j)^{(r-m_j)}
	\prod\limits_{j=1}^{r(\nu)}m_j!}
	\setlength\arraycolsep{-.5pt}
	\begin{vmatrix}
		1 & \dots & 1 \\
		(\alpha+1+m_1) & \dots & (\alpha+1+m_{r(\nu)}) \\
		\vdots & \ddots & \vdots \\
		(\alpha+1+m_1)^{(r(\nu)-1)} & \dots & (\alpha+1+m_{r(\nu)})^{(r(\nu)-1)}
	\end{vmatrix}.
\end{equation*}
Applying the same row operations as in the analysis of the first truncated Wronskian, this determinant is simply a Vandermonde determinant with additional rows. 

The solution of the first kind $\Omega_{\mu,\nu}\left[h^{\al}(0,\la)\right]$ merges the two truncated matrices analyzed above, with some columns rearranged.  The result is a power of $(-1)$ as an additional factor. The same column operations can be performed as above, and the row operations for the two truncated matrices agree. The result follows by collecting the above terms and by Eq.~\eqref{e-vandermonde}.
\end{proof}

The solution of the second kind can similarly be evaluated at $0$ using the methods of Theorem \ref{t-first0}. We define $\bs$ to be the analog of $s$ for the sequence $\{n_i'\}_{i=1}^{r(\mu')}$ and $\bs'$ the analog for the sequence $\{m_j'\}_{j=1}^{r(\nu')}$.

\begin{cor}\label{c-second0}
Given partitions $\mu'$ and $\nu'$ defined via Eqs.~\eqref{e-canonicalpartition1} and \eqref{e-canonicalpartition2} and $\bs$, $\bs'$ defined above, then, $\Omega_{\mu',\nu'}\left[\wt{h}^{\al}(0,\la)\right]$ is equal to 
\begin{align*}
    \frac{\prod\limits_{i=1}^{r+1}(-\alpha+k)^{(r+1-k)}
	\cdot
	\prod\limits_{j=1}^{\bs'-1} (-\alpha+r+1)^{(m_j'-r)}
	\cdot 
	\prod\limits_{i=1}^{\bs-1}(-\alpha+r+1)^{(n_i'-r)}
	\cdot
	\Delta(m_{\nu'}',n_{\mu'}',-\la-\al)}
	{(1-\al)^{(r)}
	\cdot
	\prod\limits_{j=\bs'}^{r(\nu')}(-\alpha+1+m_j')^{(r-m_j')}
	\cdot 
	\prod\limits_{i=\bs}^{r(\mu')}(-\alpha+1+n_i')^{(r-n_i')}
	\cdot
	\prod\limits_{j=1}^{r(\nu')} m_j'! 
	\cdot
	\prod\limits_{i=1}^{r(\mu')} n_i'!},
\end{align*}
where 
\begin{align*}
\Delta(m_{\nu'}',n_{\mu'}',-\la-\al)=\Delta(-m_{r(\nu)'}',&\dots,-m_2',-m_1', \\
&-\alpha+1+n_1',-\alpha+1+n_2',\dots,-\alpha+1+n_{r(\mu')}',-\la-\al).
\end{align*}
\end{cor}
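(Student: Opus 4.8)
The plan is to deduce Corollary~\ref{c-second0} from Theorem~\ref{t-first0} by an explicit substitution, so that no column-and-row reduction need be repeated: the structural parallel between the two formulas is precisely a reflection of the change $\al\mapsto-\al$, $\la\mapsto\la+\al$ together with an interchange of the two index groups. First I would record the shape of the solution of the second kind in conjugate canonical position. There $r_1=r_2=0$, so the columns of the Wronskian in Eq.~\eqref{e-secondgen} consist only of seed functions of the forms \eqref{e-eigen3} and \eqref{e-eigen4}, namely $x^{-\al}L^{-\al}_{m_j'}(x)$ and $e^xx^{-\al}L^{-\al}_{n_j'}(-x)$, together with the solution $\wt h^{\al}(x,\la)=x^{-\al}M(-\la-\al,1-\al,x)$ of Eq.~\eqref{e-shortsolns2}. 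The key observation is that every one of these $r+1$ columns carries the common factor $x^{-\al}$.

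Next I would strip the prefactor. Applying the Wronskian identity \eqref{e-wronskian1} with $g=x^{-\al}$ to all $r+1$ columns extracts $x^{-\al(r+1)}$ and, crucially, leaves behind the Wronskian of the \emph{bare} functions $L^{-\al}_{m_j'}(x)$, $e^xL^{-\al}_{n_j'}(-x)$ and $M(-\la-\al,1-\al,x)$, whose rows are therefore governed by the first and second derivative rules of Eqs.~\eqref{e-derivatives} and \eqref{e-solnderivs} with parameter $-\al$ (rather than by the parameter-lowering third and fourth rules). Since $r=r_3+r_4$ and $r_1=r_2=0$, the prefactor exponent is $(\al+r_1+r_2)(r_3+r_4+1)=\al(r+1)$, so the two powers of $x$ cancel exactly. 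Moreover the second line of Eq.~\eqref{e-derivatives} (read at parameter $-\al$) preserves the factor $e^x$ of each column coming from \eqref{e-eigen4}, so $e^x$ pulls out of each of the $r_4$ such columns as $e^{r_4x}$, which cancels the remaining prefactor $e^{-(r_2+r_4)x}=e^{-r_4x}$.

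What survives is exactly a solution of the \emph{first} kind: writing $\tilde\mu=\nu'$ (the entries $\{m_j'\}$ serving as type-\eqref{e-eigen1} indices) and $\tilde\nu=\mu'$ (the entries $\{n_i'\}$ serving as type-\eqref{e-eigen2} indices), the two cancellations above give, up to the sign allowed by Assumption~\ref{a-signs},
\begin{align*}
\Omega\ci{\mu',\nu'}\left[\wt h^{\al}(x,\la)\right]=\Omega_{\tilde\mu,\tilde\nu}\left[h^{-\al}(x,\la+\al)\right],
\end{align*}
since $h^{-\al}(x,\la+\al)=M(-\la-\al,1-\al,x)$ matches the $M$-column and the first line of Eq.~\eqref{e-solnderivs} transforms consistently under $\al\mapsto-\al$, $\la\mapsto\la+\al$. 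Evaluating at $x=0$ and invoking Theorem~\ref{t-first0} with this substitution then reproduces every factor of the claim: the cutoff $s$ for the $n_i$ becomes $\bs'$ for the $m_j'$, the cutoff $s'$ for the $m_j$ becomes $\bs$ for the $n_i'$, the normalization $L^{\al}_n(0)=(\al+1)^{(n)}/n!$ is replaced by $L^{-\al}_n(0)=(-\al+1)^{(n)}/n!$, and the Vandermonde determinant $\Delta(n_\mu,m_\nu,-\la)$ becomes $\Delta(m_{\nu'}',n_{\mu'}',-\la-\al)$.

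The main obstacle is purely the bookkeeping of this substitution: one must check that the interchange of the two blocks ($\mu\leftrightarrow\nu'$ as the type-one block, $\nu\leftrightarrow\mu'$ as the type-two block) is applied uniformly to the rising-factorial numerator, to the factorial and $(-\al+1+n_i')$, $(-\al+1+m_j')$ denominators, and to the arguments of the Vandermonde determinant, and that the cancellation of the $x^{-\al(r+1)}$ and $e^{-r_4x}$ prefactors is exact in conjugate canonical position. Should one prefer to avoid the substitution entirely, the alternative is to run the two-submatrix split, the column and row operations, and the final Vandermonde reduction of Theorem~\ref{t-first0} directly on the reduced polynomial Wronskian, now using the third and fourth derivative identities of Eqs.~\eqref{e-derivatives} and \eqref{e-solnderivs}; that computation proceeds verbatim.
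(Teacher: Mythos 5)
Your proof is correct and follows essentially the same route as the paper: strip the common factor $x^{-\al}$ from all $r+1$ columns so that it cancels the prefactor $x^{\al(r+1)}$, recognize what remains as a solution of the first kind with parameter $-\al$, partition roles interchanged to $(\nu',\mu')$, and spectral parameter $\la+\al$, and then invoke Theorem \ref{t-first0}. The only quibble is your aside about pulling $e^{r_4x}$ out of the type-\eqref{e-eigen4} columns --- no such step is needed (and extracting $e^x$ from only some columns is not a valid Wronskian operation), since the surviving prefactor $e^{-r_4x}$ is exactly the prefactor $e^{-\tilde r_2 x}$ already built into the first-kind expression $\Omega_{\nu',\mu'}\left[h^{-\al}(x,\la+\al)\right]$, and in any case it equals $1$ at $x=0$.
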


\begin{proof}
The solution of the second kind $\Omega_{\mu',\nu'}\left[\wt{h}^{\al}(x,\la)\right]$ by definition has a prefactor of $x^{\al(r'+1)}$. Removing a factor of $x^{-\al}$ from each column cancels with this prefactor and yields a Wronskian with only functions of the types given in Eq.~\eqref{e-eigen1}, indexed by $\{m_j'\}_{j=1}^{r(\nu')}$, and Eq.~\eqref{e-eigen2}, indexed by $\{n_i'\}_{i=1}^{r(\mu')}$, with the function $M(-\la-\al,1-\al,x)$ as the input for the last column. The partition indices have thus been changed to $(\nu',\mu')$. It is then possible to apply Theorem \ref{t-first0} to evaluate this Wronskian. Note the parameter for the seed functions is now $-\al$, as opposed to $\al$ from Theorem \ref{t-first0}. The result follows. 
\end{proof}

The proof of Theorem \ref{t-first0} can be modified to fit the simplified case where the generalized Laguerre polynomial is considered. This is essentially \cite[Lemma 5.1]{D} in the notation of this manuscript. Here, $s$ and its analogs must be slightly altered as there are $r-1$ instead of $r$ derivatives taken. Let $s>0$ be the smallest natural number such that $n_i<r-1$ for all $i\geq s$ and $n_i\geq r-1$ for all $i<s$. If such an $s$ does not exist, set $s=r(\mu)+1$. Define $s'$, $\bs$ and $\bs'$ analogously.

\begin{cor}\label{c-simple0}
Given partitions $\mu$ and $\nu$ defined via Eqs.~\eqref{e-canonicalpartition1} and \eqref{e-canonicalpartition2} and $s$, $s'$ defined above, then
\begin{align*}
\Omega_{\mu,\nu}^{\al}(0)=
    \frac{\prod\limits_{k=1}^{r}(\alpha+k)^{(r-k)}
	\cdot
	\prod\limits_{i=1}^{s-1} (\alpha+r)^{(n_i-r+1)}
	\cdot 
	\prod\limits_{j=1}^{s'-1}(\alpha+r)^{(m_j-r+1)}
	\cdot
	\Delta(n_{\mu},m_{\nu})}
	{\prod\limits_{i=s}^{r(\mu)}(\alpha+1+n_i)^{(r-1-n_i)}
	\cdot 
	\prod\limits_{j=s'}^{r(\nu)}(\alpha+1+m_j)^{(r-1-m_j)}
	\cdot
	\prod\limits_{i=1}^{r(\mu)} n_i! 
	\cdot 
	\prod\limits_{j=1}^{r(\nu)} m_j!}.
\end{align*}
Furthermore, given $\mu'$ and $\nu'$ and $\bs$, $\bs'$ defined above,
\begin{align*}
    \Omega_{\mu',\nu'}^\al(0)=\frac{\prod\limits_{i=1}^{r}(-\alpha+k)^{(r-k)}
	\cdot
	\prod\limits_{j=1}^{\bs'-1} (-\alpha+r)^{(m_j'-r+1)}
	\cdot 
	\prod\limits_{i=1}^{\bs-1}(-\alpha+r)^{(n_i'-r+1)}
	\cdot
	\Delta(m_{\nu'}',n_{\mu'}')}
	{\prod\limits_{j=\bs'}^{r(\nu')}(-\alpha+1+m_j')^{(r-1-m_j')}
	\cdot 
	\prod\limits_{i=\bs}^{r(\mu')}(-\alpha+1+n_i')^{(r-1-n_i')}
	\cdot
	\prod\limits_{j=1}^{r(\nu')} m_j'! 
	\cdot
	\prod\limits_{i=1}^{r(\mu')} n_i'!}.
\end{align*}
\end{cor}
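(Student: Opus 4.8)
The plan is to run the argument of Theorem~\ref{t-first0} essentially verbatim, with the final column (the one containing the solution $h^\al$) and the corresponding highest-order derivative row deleted. Since Eq.~\eqref{e-particularlaguerre} defines $\Omega^\al_{\mu,\nu}$ as a Wronskian of exactly $r$ seed functions, its expansion is an $r\times r$ determinant built from derivatives of orders $0,1,\dots,r-1$, as opposed to the $(r+1)\times(r+1)$ determinant with derivatives up to order $r$ that appears in Theorem~\ref{t-first0}. This single change is exactly what forces the redefinition of $s$ and $s'$ given just before the statement (with $r-1$ in place of $r$), and it is what shifts every exponent in the target formula.

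First I would cancel the prefactor and apply the derivative identities of Eqs.~\eqref{e-derivatives} and \eqref{e-solnderivs} just as in the lead-up to Theorem~\ref{t-first0}, obtaining the determinant whose columns are the type-1 seed functions indexed by $\{n_i\}$ together with the type-2 seed functions indexed by $\{m_j\}$, but now with no trailing solution column. Evaluating at $x=0$ through $L^\al_n(0)=(\al+1)^{(n)}/n!$, I would split the determinant into the two truncated blocks (one associated with $\mu$, one with $\nu$), reorder each into ascending index order at the cost of signs absorbed by Assumption~\ref{a-signs}, and carry out the identical column operations: removing $(\al+r)^{(n_i-r+1)}/n_i!$ from the columns with $i<s$ and $1/(\al+1+n_i)^{(r-1-n_i)}n_i!$ from those with $i\ge s$, and the analogous removals in the $\nu$-block. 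The shift of base from $\al+r+1$ to $\al+r$ and of exponent from $n_i-r$ to $n_i-r+1$ is precisely the consequence of having $r$ rather than $r+1$ rows.

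Next I would strip a factor of $(\al+k)^{(r-k)}$ from each row $k=1,\dots,r$, reducing each block to a Vandermonde-type matrix as in Theorem~\ref{t-first0}. Because no solution column is present, there is neither a $-\la$ node nor a leftover $1/(\al+1)^{(r)}$ factor, so the two blocks recombine into the Vandermonde determinant $\Delta(n_{\mu},m_{\nu})$ of Eq.~\eqref{e-vandermonde} on the nodes $-n_{r(\mu)},\dots,-n_1,\al+1+m_1,\dots,\al+1+m_{r(\nu)}$ with no final entry. Collecting the removed factors then yields the first displayed formula. For $\Omega^\al_{\mu',\nu'}(0)$ I would mimic Corollary~\ref{c-second0}: remove $x^{-\al}$ from each of the $r$ columns to cancel the prefactor, converting the seed functions into type-1 and type-2 functions with parameter $-\al$ and index sets $\{m_j'\}$, $\{n_i'\}$, and then apply the first part of the corollary with $\al\mapsto-\al$ and the partitions relabeled as $(\nu',\mu')$.

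I do not expect any genuinely new phenomenon here; the whole point is that the generalized Laguerre polynomial is the $h$-free specialization of the object already handled. The one place to proceed carefully is the bookkeeping of the rising-factorial exponents under the $r+1\mapsto r$ reduction, since an off-by-one slip in an exponent or in the threshold defining $s$ would silently corrupt the final products without changing the shape of the answer. This exponent accounting is the single step where I would slow down and check each removal against its counterpart in the proof of Theorem~\ref{t-first0}.
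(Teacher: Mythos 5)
Your proposal follows the paper's proof exactly: the paper likewise derives the corollary by rerunning Theorem~\ref{t-first0} on the $r\times r$ Wronskian (so only $r-1$ derivatives are taken), adjusting the rising-factorial exponents and the thresholds $s$, $s'$ accordingly, and handling the conjugate case via the substitution used in Corollary~\ref{c-second0}. Your write-up is in fact more explicit about the exponent bookkeeping than the paper's own two-sentence argument, and the details you supply are correct.
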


\begin{proof}
It is necessary to slightly alter the column operations of Theorem \ref{t-first0} in order to account for the fact that the Wronskian only has $r$ rows and columns. Hence, there are $r-1$ derivatives taken and this is reflected in the altered rising factorials and terms in the numerators of the expressions. The results follow after these alterations. 
\end{proof}

\section{Weyl $m$-Functions}\label{s-mfunctions}

Sections \ref{s-manipulations} and \ref{s-normalizations} have provided the tools necessary to obtain spectral information about self-adjoint extensions of exceptional Laguerre expressions given by two general Maya diagrams. 
 
 Let $\cL$ be a general Laguerre-type differential expression acting on $L^2[(0,\infty),\cW(x)dx]$ with maximal domain $\cD\ti{max}$. Given fixed $M_1$ and $M_2$, the partitions $\mu$, $\nu$ are determined by Eqs.~\eqref{e-canonicalpartition1} and \eqref{e-canonicalpartition2} and the values $r(\mu)$, $r(\nu)$ are determined by Eqs.~\eqref{e-rs1} and \eqref{e-rs2}. The weight function possesses a factor of $e^{-x}x^{\bal}$, where $\bal=\al'+r(\mu)+r(\nu)=\al'+r$ is fixed due to Lemma \ref{l-ortho}. It is assumed that $\bal>-1$ so $\cL$ is in the limit-circle case at $x=0$. The value $\al'=\al-t_1-t_2$ is determined by the shifts $t_1$ and $t_2$ of the partitions from Theorem \ref{t-new4.2}. The weight function $\cW(x)$ depends on a generalized Laguerre polynomial $\Omega\ci{M_1,M_2}^\al(x)$, which we identify with $\phi$ from Section \ref{s-prelims}. 
 
\begin{remme}\label{r-alphas}
The attentive reader may question why $\al'$ is used in the definition of $\bal$ for the order of the underlying Hilbert space instead of $\al-t_1'-t_2'$, which appears in Theorem \ref{t-new4.22}. Is the order $\al-t_1'-t_2'$ even useful? Indeed, Theorem \ref{t-new4.2} shows that multiplication of the Wronskian by a certain factor can change the order of the parameter $\al$ within the Wronskian, causing some ambiguity. The solution of the second kind in Theorem \ref{t-new4.22} could be shifted to canonical position, also yielding a parameter $\al-t_1-t_2$. However, it is not possible to perform the evaluation in Corollary \ref{c-second0} with this setup. Hence, the order $\al-t_1'-t_2'$ is still necessary but not related to the underlying Hilbert space.
\end{remme}
 
 The Frobenius analysis of Section \ref{s-mayaforxop} for general Laguerre XOP expressions indicates that there are two linearly independent solutions at the regular singular point $x=0$: $y_1(x)=1$ and $y_2(x)=x^{-\bal}$. These two functions are in the maximal domain $\cD\ti{max}$ as they both belong to $L^2[(0,\infty),\cW(x)]$ and are solutions to the equation $(\cL-\la)f=0$. For the sake of normalization, we will use the functions
\begin{align}\label{e-phi}
\wt{y}_1(x):=-\frac{\Omega\ci{M_1,M_2}^\al(0)}{\bal} \quad\text{ and }\quad \wt{y}_2(x):=\Omega\ci{M_1,M_2}^\al(0)\cdot x^{-\bal}
\end{align}
instead of $y_1(x)=1$ and $y_2(x)=x^{-\bal}$ as particular solutions. Note that $\Omega\ci{M_1,M_2}^\al(x)$ is a polynomial with real coefficients so $\Omega\ci{M_1,M_2}^\al(0)$ exists, and it was already assumed that $\Omega\ci{M_1,M_2}^\al(0)\neq0$. Hence, $\wt{y}_1$ and $\wt{y}_2$ are well-defined and non-zero but slightly different from the formulations found in Section \ref{s-type1}.

The sesquilinear form associated to the expression $\cL$ is defined via Eq.~\eqref{e-xopsesqui}. Let $f,g\in \cD\ti{max}$ and define
\begin{align}\label{e-boundaryops}
    \Gamma_0f=f^{[0]}(0)&:=[f,\wt{y}_2]\ci\cL(0)=\lim_{x\to0^+}-\frac{\bal f(x)+xf'(x)}{\Omega\ci{M_1,M_2}^\al(0)} ,\\
    \Gamma_1f=f^{[1]}(0)&:=[f,\wt{y}_1]\ci\cL(0)=\lim_{x\to0^+}\frac{x^{\bal+1}}{\bal\cdot \Omega\ci{M_1,M_2}^\al(0)}f'(x).
\end{align}
The choice of the functions $\wt{y}_1$ and $\wt{y}_2$ immediately imply that $[\wt{y}_1,\wt{y}_2]\ci\cL(0)=1$. The functions $\wt{y}_1$ and $\wt{y}_2$ adhere to Definition \ref{d-quasideriv}, so Proposition \ref{p-btsetup} implies that $\{\CC,\Gamma_0,\Gamma_1\}$ is a boundary triple for $\cD\ti{max}$. Explicitly, it is easy to calculate that
\begin{align*}
    \langle\Gamma_1 f,\Gamma_0 g\rangle-\langle\Gamma_0 f,\Gamma_1 g\rangle=[f,g]\ci\cL(0).
\end{align*}
As in Eq.~\eqref{e-generalsetup}, this boundary triple naturally defines two self-adjoint extensions of the exceptional Laguerre expression $\cL$: $\cL_\infty$ and $\cL_0$. 

The solutions of the first and second kind can be modified to form a fundamental system for the equation $(\cL\ti{max}-\la)f=0$. Define
\begin{equation}\label{e-fundsystem1}
\begin{aligned}
u_1(x,\la)&:=-\frac{\Omega\ci{M_1,M_2}^\al(0) \cdot \Omega\ci{M_1,M_2}\left[h^{\al}(x,\la)\right]}
{\bal \cdot \Omega\ci{M_1,M_2}\left[h^{\al}(0,\la)\right]}
=
-\frac{\Omega\ci{M_1,M_2}^\al(0) \cdot \Omega_{\mu,\nu}\left[h^{\al'}(x,\la+t_1)\right]}
{\bal \cdot \Omega_{\mu,\nu}\left[h^{\al'}(0,\la+t_1)\right]}, 
\end{aligned}
\end{equation}
and
\begin{equation}\label{e-fundsystem2}
\begin{aligned}
u_2(x,\la)&:=-\frac{\Omega\ci{M_1,M_2}^\al(0)  \cdot x^{-\bal} \cdot \Omega\ci{M_1,M_2}\left[\wt{h}^{\al}(x,\la)\right]}
{\Omega\ci{M_1,M_2}\left[\wt{h}^{\al}(0,\la)\right]}\\
&=
-\frac{\Omega\ci{M_1,M_2}^\al(0)  \cdot x^{-\bal} \cdot \Omega_{\mu',\nu'}\left[\wt{h}^{\al-t_1'-t_2'}(x,\la+t_1')\right]}
{\Omega_{\mu',\nu'}\left[\wt{h}^{\al-t_1'-t_2'}(0,\la+t_1')\right]},
\end{aligned}
\end{equation}
where each equality follows from shifting the Maya Diagrams in both the numerator and denominator, with the constants from Theorems \ref{t-new4.2} and \ref{t-new4.22} cancelling out. Notice that if the prefactor of $\Omega\ci{M_1,M_2}\left[\wt{h}^{\al}(x,\la)\right]$ is ignored, then the result is guaranteed to be in $L^2[(0,\infty),\cW(x)dx]$ by the intertwining property of the Darboux transform. The multiplication by $x^{-\bal}$ above ensures this remains the case with the standard prefactor included. 

\begin{ass}\label{a-type3}
We assume that the $x^{-\bal}$ term is multiplied to the solution of the second kind and therefore defines the behavior of $u_2(x,\la)$ near $x=0$.
\end{ass}

\begin{remme}\label{r-type3}
Multiplication by the $x^{-\bal}$ term discussed in Assumption \ref{a-type3} may instead be applied to the solution of the first kind, but this would change our notation significantly. In particular, this alternative convention can be translated into what we use here by changing the Maya diagrams and treating the function of the second kind as the principal solution, which is a polynomial by the notation of Section \ref{s-mayaforxop}. The alternate convention has been used to state examples like the Type III Laguerre XOPs described in \cite{LLMS}, where the eigenvalues generated by the polynomials can all be of the form $n-\al$ for $n\in\NN_0$. 
\end{remme}

\begin{cor}\label{c-ics}
The fundamental system $(u_1(x,\la),u_2(x,\la))$ for the equation $(\cL\ti{max}-\la)f=0$, given in Eqs.~\eqref{e-fundsystem1} and \eqref{e-fundsystem2}, satisfies the initial conditions
\begin{align*}
    \left( \begin{array}{cc}
u_1^{[0]}(0,\la) &  u_2^{[0]}(0,\la) \\
u_1^{[1]}(0,\la) & u_2^{[1]}(0,\la)
\end{array} \right)
=
    \left( \begin{array}{cc}
1 & 0 \\
0 & 1
\end{array} \right).
\end{align*}
\end{cor}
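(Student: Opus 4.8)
The plan is to substitute the closed forms \eqref{e-fundsystem1} and \eqref{e-fundsystem2} into the boundary maps \eqref{e-boundaryops} and evaluate the four resulting limits as $x\to 0^+$. The only structural input needed is the local behaviour of the two solutions at the regular singular endpoint, which the Frobenius analysis of Theorem \ref{t-frob} supplies: the indicial roots are $0$ and $-\bal$, so a solution of the first kind is analytic at $x=0$ while a solution of the second kind is $x^{-\bal}$ times an analytic function.

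First I would record the normal forms. Set $Q(x):=\Omega_{\mu,\nu}\left[h^{\al'}(x,\la+t_1)\right]$ and $P(x):=\Omega_{\mu',\nu'}\left[\wt h^{\al-t_1'-t_2'}(x,\la+t_1')\right]$, the Wronskians appearing in the second equalities of \eqref{e-fundsystem1} and \eqref{e-fundsystem2}. Both are analytic near $x=0$ with $Q(0),P(0)\neq0$; indeed $Q(0)$ and $P(0)$ are precisely the nonzero values produced by Theorem \ref{t-first0} and Corollary \ref{c-second0}. Consequently $u_1(x,\la)=-\tfrac{\Omega\ci{M_1,M_2}^\al(0)}{\bal}\cdot\tfrac{Q(x)}{Q(0)}$ is analytic at $0$ with $u_1(0,\la)=-\Omega\ci{M_1,M_2}^\al(0)/\bal$, and $u_2(x,\la)=A\,x^{-\bal}P(x)$ with $A:=-\Omega\ci{M_1,M_2}^\al(0)/P(0)$ is $x^{-\bal}$ times an analytic function. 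Note that the normalizing denominators $Q(0)$ and $P(0)$ cancel in every computation below, so the explicit evaluations from Section \ref{s-normalizations} are only invoked in order to know that they are nonzero.

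Next I would read off the four entries directly. Because $u_1$ and $u_1'$ are bounded near $0$, we have $xu_1'(x)\to0$ and $x^{\bal+1}u_1'(x)\to0$ (using $\bal>-1$), so \eqref{e-boundaryops} gives $u_1^{[0]}(0,\la)=-\bal\,u_1(0,\la)/\Omega\ci{M_1,M_2}^\al(0)=1$ and $u_1^{[1]}(0,\la)=0$. For $u_2$ the decisive algebraic point is that forming $\bal u_2+xu_2'$ from $A\,x^{-\bal}P(x)$ cancels the leading $x^{-\bal}$ term, leaving the exact identity $\bal u_2(x)+xu_2'(x)=A\,x^{1-\bal}P'(x)$, which tends to $0$ because $\bal<1$ in the limit-circle range; hence $u_2^{[0]}(0,\la)=0$. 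Finally $x^{\bal+1}u_2'(x)=A\bigl(-\bal P(x)+xP'(x)\bigr)\to-A\bal P(0)=\bal\,\Omega\ci{M_1,M_2}^\al(0)$, so $u_2^{[1]}(0,\la)=1$, which assembles the identity matrix.

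The routine part is the limits; the real work is justifying the two normal forms. In particular I must confirm that the polynomial prefactor convention of \eqref{e-secondgen}, combined with the explicit factor $x^{-\bal}$ in \eqref{e-fundsystem2}, reproduces exactly the $r=-\bal$ Frobenius solution, and that the cancellation $\bal u_2+xu_2'=A\,x^{1-\bal}P'$ is exact rather than merely leading-order. This is the step where the limit-circle restriction $-1<\bal<1$ is essential: it is needed simultaneously for $x^{\bal+1}\to0$ and $x^{1-\bal}\to0$, and it guarantees, through Theorem \ref{t-limits}, that all four boundary limits exist and are finite.
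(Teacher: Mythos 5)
Your proposal is correct and follows the same route as the paper, whose proof simply states that the identities are verified by plugging $u_1$ and $u_2$ into the quasi-derivatives; you have filled in the details of that verification (the exact cancellation $\bal u_2+xu_2'=A\,x^{1-\bal}P'$ and the role of $-1<\bal<1$ in each limit), all of which check out.
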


\begin{proof}
Recall that both $\Omega\ci{M_1,M_2}\left[h^{\al}(x,\la)\right]$ and $\Omega\ci{M_1,M_2}\left[\wt{h}^{\al}(x,\la)\right]$ are formulated as polynomials. The identities are easily verified by plugging $u_1(x,\la)$ and $u_2(x,\la))$ into the quasi-derivatives. 
\end{proof}

Let $\chi(x,\la)$ be a general deficiency element satisfying $(\ell-\la)\chi(x,\la)=0$. Then, $\chi$ can be written as a linear combination of the solutions which compose the fundamental system. In the case of the classical Laguerre operator, $\chi$ is the Tricomi confluent hypergeometric function which can be expressed via \cite[Equation (13.2.42)]{DLMF} as
\begin{align}\label{e-oldtricomi}
U(-\la,\al+1,x)=\frac{\Gamma(-\al)}{\Gamma(-\la-\al)}M(-\la,\al+1,x)+\frac{\Gamma(\al)}{\Gamma(-\la)}z^{-\al}M(-\la-\al,1-\al,x).
\end{align}
The intertwining property of the Darboux transform then allows us to define a general deficiency element of the exceptional differential expression. 

\begin{prop}\label{p-defelement}
A general deficiency element $\chi\ci\cL(x,\la)$ of $(\cL-\la)\chi(x,\la)=0$ can be written as 
\begin{align*}
    \chi\ci\cL(x,\la)=\fC u_1(x,\la)+\fD u_2(x,\la),
\end{align*}
where 
\begin{equation}\label{e-finalconstants}
\begin{aligned}
    \fC&=-\frac{\bal
    \cdot 
    C
    \cdot
    \Gamma(-\al)
    \cdot 
    \Omega_{\mu,\nu}\left[h^{\al'}(0,\la+t_1)\right]}
    {\Gamma(-\la-\al)
    \cdot 
    \Omega\ci{M_1,M_2}^\al(0) }, \\
    \fD&=-\frac{D
    \cdot 
    \Gamma(\al)
    \cdot 
    \Omega_{\mu',\nu'}\left[\wt{h}^{\al-t_1'-t_2'}(0,\la+t_1')\right]}
    {\Gamma(-\la)
    \cdot
    \Omega\ci{M_1,M_2}^\al(0)}.
\end{aligned}
\end{equation}
In particular, we have that $\chi\ci\cL^{[0]}(x,\la)=\fC$ and $\chi\ci\cL^{[1]}(x,\la)=\fD$.
\end{prop}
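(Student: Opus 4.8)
The plan is to produce $\chi\ci\cL$ as the image of the classical Tricomi deficiency element under the intertwining Darboux transform and then re-expand that image in the fundamental system $(u_1,u_2)$. Because $x=0$ is limit-circle (Theorem~\ref{t-frob}), every solution of $(\cL-\la)f=0$ is square-integrable near the origin, while $U(-\la,\al+1,x)$ is the recessive solution at the limit-point endpoint $x=\infty$; hence, by the intertwining property of Lemma~\ref{l-intertwine}, its Darboux image $\chi\ci\cL:=\Omega\ci{M_1,M_2}\left[U(-\la,\al+1,x)\right]$ is a genuine element of $\mathfrak{N}_\la(\cL\ti{max})$. First I would use Eq.~\eqref{e-oldtricomi} together with the definitions~\eqref{e-shortsolns1} and~\eqref{e-shortsolns2} to record the classical decomposition $U(-\la,\al+1,x)=\tfrac{\Gamma(-\al)}{\Gamma(-\la-\al)}\,h^{\al}(x,\la)+\tfrac{\Gamma(\al)}{\Gamma(-\la)}\,\wt h^{\al}(x,\la)$.

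Because $\Omega\ci{M_1,M_2}[\,\cdot\,]$ augments the Wronskian linearly in its final column, this decomposition transports through the Darboux transform to $\chi\ci\cL=\tfrac{\Gamma(-\al)}{\Gamma(-\la-\al)}\,\Omega\ci{M_1,M_2}\left[h^{\al}(x,\la)\right]+\tfrac{\Gamma(\al)}{\Gamma(-\la)}\,x^{-\bal}\Omega\ci{M_1,M_2}\left[\wt h^{\al}(x,\la)\right]$, where the factor $x^{-\bal}$ on the second summand is exactly the singular correction imposed by Assumption~\ref{a-type3}, placing that piece in the correct gauge and Hilbert space. I would then read off from the definitions~\eqref{e-fundsystem1} and~\eqref{e-fundsystem2} the proportionalities $\Omega\ci{M_1,M_2}\left[h^{\al}(x,\la)\right]=-\tfrac{\bal\,\Omega\ci{M_1,M_2}\left[h^{\al}(0,\la)\right]}{\Omega\ci{M_1,M_2}^{\al}(0)}\,u_1(x,\la)$ and $x^{-\bal}\Omega\ci{M_1,M_2}\left[\wt h^{\al}(x,\la)\right]=-\tfrac{\Omega\ci{M_1,M_2}\left[\wt h^{\al}(0,\la)\right]}{\Omega\ci{M_1,M_2}^{\al}(0)}\,u_2(x,\la)$ and substitute, obtaining $\chi\ci\cL=\fC u_1+\fD u_2$ with $\fC,\fD$ equal to the Tricomi coefficients times these normalizing constants.

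Finally I would invoke Theorem~\ref{t-new4.2} to replace $\Omega\ci{M_1,M_2}\left[h^{\al}(0,\la)\right]$ by $C\,\Omega_{\mu,\nu}\left[h^{\al'}(0,\la+t_1)\right]$ with $\al'=\al-t_1-t_2$, and Theorem~\ref{t-new4.22} to replace $\Omega\ci{M_1,M_2}\left[\wt h^{\al}(0,\la)\right]$ by $D\,\Omega_{\mu',\nu'}\left[\wt h^{\al-t_1'-t_2'}(0,\la+t_1')\right]$; these substitutions turn the preceding expressions into exactly the closed forms of Eq.~\eqref{e-finalconstants}. The concluding assertion is then immediate, since the quasi-derivatives $f\mapsto f^{[0]}(0,\la)$ and $f\mapsto f^{[1]}(0,\la)$ are linear: applying them to $\chi\ci\cL=\fC u_1+\fD u_2$ and using the normalized initial conditions of Corollary~\ref{c-ics} gives $\chi\ci\cL^{[0]}(0,\la)=\fC$ and $\chi\ci\cL^{[1]}(0,\la)=\fD$.

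The delicate point is the gauge reconciliation in the second paragraph. The prefactors attached to $\Omega\ci{M_1,M_2}\left[h^{\al}\right]$ and $\Omega\ci{M_1,M_2}\left[\wt h^{\al}\right]$ in Eqs.~\eqref{e-firstgen} and~\eqref{e-secondgen} differ, so the naive image of $U$ is not literally the sum of the two polynomial surrogates; one must verify that the Darboux transform carries the regular classical solution $h^\al$ to the regular first-kind solution but carries $\wt h^\al$ to the singular second-kind solution only after multiplication by $x^{-\bal}$, so that both summands share a common gauge and their hidden prefactors cancel against those in~\eqref{e-fundsystem1}--\eqref{e-fundsystem2}. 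By Assumption~\ref{a-signs} every identity above is valid only up to a factor $(-1)^d$; since the Weyl $m$-function depends only on the ratio $\fD/\fC$, this sign ambiguity is immaterial and is ultimately fixed by enforcing the Nevanlinna--Herglotz property.
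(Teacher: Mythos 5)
Your proposal is correct and follows essentially the same route as the paper: apply the Darboux transform to the Tricomi decomposition \eqref{e-oldtricomi}, use linearity of the Wronskian in its final column, convert the coefficients via Theorems \ref{t-new4.2} and \ref{t-new4.22}, match against the definitions \eqref{e-fundsystem1}--\eqref{e-fundsystem2}, and conclude with Corollary \ref{c-ics}. The extra remarks on membership in $\mathfrak{N}_\la(\cL\ti{max})$ and the $x^{-\bal}$ gauge reconciliation are sound elaborations of points the paper leaves implicit.
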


\begin{proof}
Let $A\ci{M_1,M_2}$ denote the multi-step Darboux transform with seed functions indexed by $M_1$ and $M_2$. The Wronskian is linear within its entries because differentiation and multiplication are. Applying $A\ci{M_1,M_2}$ to Eq.~\eqref{e-oldtricomi} then yields
\begin{align*}
    \chi\ci\cL(x,\la)&=A\ci{M_1,M_2}\left[\chi(x,\la)\right]\\&=A\ci{M_1,M_2}\left[\frac{\Gamma(-\al)}{\Gamma(-\la-\al)}h^{\al}(x,\la)+\frac{\Gamma(\al)}{\Gamma(-\la)}\wt{h}^{\al}(x,\la)\right] \\
    &=\frac{\Gamma(-\al)}{\Gamma(-\la-\al)}\cdot\Omega\ci{M_1,M_2}\left[h^{\al}(x,\la)\right]+\frac{\Gamma(\al)}{\Gamma(-\la)}x^{-\bal}\cdot\Omega\ci{M_1,M_2}\left[\wt{h}^{\al}(x,\la)\right].
\end{align*}
Recall the constants stemming from Theorems \ref{t-new4.2} and \ref{t-new4.22}, and stated in Eqs.~\eqref{e-C} and \eqref{e-D}, are denoted by $C$ and $D$. Then,
\begin{align*}
    \chi\ci\cL(x,\la)&=C\frac{\Gamma(-\al)}{\Gamma(-\la-\al)}\cdot\Omega_{\mu,\nu}\left[h^{\al'}(x,\la+t_1)\right]+D\frac{\Gamma(\al)}{\Gamma(-\la)}x^{-\bal}\cdot\Omega_{\mu',\nu'}\left[\wt{h}^{\al-t_1'-t_2'}(x,\la+t_1')\right].
\end{align*}
The formulation of $\chi\ci\cL(x,\la)$ from the statement of the proposition follows. The initial conditions of Corollary \ref{c-ics} yield the values of the quasi-derivatives of this deficiency element and the result follows. 
\end{proof}

Let $\tau\in\RR\cup\{\infty\}$ and $\cL_\tau$ refer to the self-adjoint operator acting via Eq.~\eqref{e-ltau}. The values of the quasi-derivatives of a general deficiency element allows for the definition of a Weyl $m$-function for every self-adjoint extension $\cL_\tau$.

\begin{cor}\label{c-mfunctions}
For $\la\in\rho(\cL_\infty)$, 
\begin{equation}\label{e-minfty}
\begin{aligned}
    M_\infty(\la)&=\frac{\chi\ci\cL^{[1]}(0,\la)}{\chi\ci\cL^{[0]}(0,\la)}=\frac{\fD}{\fC} \\
    &=\frac{D
    \cdot
    \Gamma(-\la-\al) 
    \cdot 
    \Gamma(\al)
    \cdot 
    \Omega_{\mu',\nu'}\left[\wt{h}^{\al-t_1'-t_2'}(0,\la+t_1')\right]}
    {\bal
    \cdot
    C
    \cdot 
    \Gamma(-\la)
    \cdot 
    \Gamma(-\al)
    \cdot
    \Omega_{\mu,\nu}\left[h^{\al'}(0,\la+t_1)\right]}.
\end{aligned}
\end{equation}
For $\la\in\rho(\cL_\infty)\cup\rho(\cL_0)$,
\begin{equation}\label{e-m0}
\begin{aligned}
M_0(\la)&=-\frac{\chi\ci\cL^{[0]}(0,\la)}{\chi\ci\cL^{[1]}(0,\la)}=-\frac{\fC}{\fD} \\
&=-\frac{\bal
    \cdot
    C
    \cdot 
    \Gamma(-\la)
    \cdot 
    \Gamma(-\al)
    \cdot
    \Omega_{\mu,\nu}\left[h^{\al'}(0,\la+t_1)\right]}
    {D
    \cdot
    \Gamma(-\la-\al) 
    \cdot 
    \Gamma(\al)
    \cdot 
    \Omega_{\mu',\nu'}\left[\wt{h}^{\al-t_1'-t_2'}(0,\la+t_1')\right]}.
\end{aligned}
\end{equation}
For $\la\in\rho(\cL_\infty)\cup\rho(\cL_\tau)$, 
\begin{equation}\label{e-mtau}
\begin{aligned}
&M_\tau(\la)=\frac{1+\tau M_\infty(\la)}{\tau-M_\infty(\la)} \\
&=\frac{\bal
    C
    \Gamma(-\la)
    \Gamma(-\al) \cdot
    \Omega_{\mu,\nu}\left[h^{\al'}(0)\right]
+
\tau D
    \Gamma(-\la-\al) 
    \Gamma(\al) \cdot
    \Omega_{\mu',\nu'}\left[\wt{h}^{\al-t_1'-t_2'}(0)\right]
}
{\tau \bal
    C 
    \Gamma(-\la)
    \Gamma(-\al) \cdot
    \Omega_{\mu,\nu}\left[h^{\al'}(0)\right]
    -
    D
    \Gamma(-\la-\al) 
    \Gamma(\al) \cdot
    \Omega_{\mu',\nu'}\left[\wt{h}^{\al-t_1'-t_2'}(0)\right]},
\end{aligned}
\end{equation}
where the dependence of the solutions on $\la$ is suppressed for ease of notation.
\end{cor}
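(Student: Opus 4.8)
The plan is to assemble all three $m$-functions directly from Proposition~\ref{p-defelement} and the boundary-triple machinery of Subsection~\ref{ss-bts}, since the genuine computational work---identifying the coefficients $\fC$ and $\fD$ of the general deficiency element---has already been done. Proposition~\ref{p-defelement} records that $\chi\ci\cL^{[0]}(0,\la)=\fC$ and $\chi\ci\cL^{[1]}(0,\la)=\fD$, so by Proposition~\ref{p-btsetup} the function $M_\infty(\la)$ is simply the ratio $\chi\ci\cL^{[1]}(0,\la)/\chi\ci\cL^{[0]}(0,\la)=\fD/\fC$. Substituting the explicit forms from Eq.~\eqref{e-finalconstants} and observing that the common factor $\Omega\ci{M_1,M_2}^\al(0)$ sits in the denominator of each (hence cancels), together with the cancellation of the two overall minus signs, produces Eq.~\eqref{e-minfty} once the Gamma factors are regrouped.

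For $M_0(\la)$ I would invoke the relation $M_0=-M_\infty^{-1}$ noted in Subsection~\ref{ss-bts}, which follows from interchanging the roles of $\Gamma_0$ and $\Gamma_1$. Thus $M_0(\la)=-\fC/\fD$, and the identical substitution and cancellation yield Eq.~\eqref{e-m0}. For the general extension $\cL_\tau$ the route is through Eq.~\eqref{e-generalbt}, which expresses $M_\tau$ in terms of $M_\infty$. Writing $M_\infty=\fD/\fC$ and clearing the inner fraction gives $M_\tau=(\fC+\tau\fD)/(\tau\fC-\fD)$. Multiplying numerator and denominator through by $-\Gamma(-\la-\al)\Gamma(-\la)\,\Omega\ci{M_1,M_2}^\al(0)$ clears all shared factors at once; the surviving terms are exactly the cross products $\bal\,C\,\Gamma(-\al)\Gamma(-\la)\,\Omega_{\mu,\nu}[h^{\al'}(0)]$ and $D\,\Gamma(\al)\Gamma(-\la-\al)\,\Omega_{\mu',\nu'}[\wt{h}^{\al-t_1'-t_2'}(0)]$ that appear in Eq.~\eqref{e-mtau}.

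The main point to verify is organizational rather than conceptual: one must confirm that the $\Omega\ci{M_1,M_2}^\al(0)$ factor in both $\fC$ and $\fD$ genuinely cancels in every ratio, so that the normalizing choice of $\wt{y}_1,\wt{y}_2$ in Eq.~\eqref{e-phi} does not leak into the final formulas, and that each Gamma-function factor stays grouped with the correct Wronskian solution. No analytic input beyond Propositions~\ref{p-btsetup} and~\ref{p-defelement} is required, and all equalities are understood up to the sign convention of Assumption~\ref{a-signs}.
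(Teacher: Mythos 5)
Your proposal is correct and follows exactly the paper's route: Proposition~\ref{p-btsetup} applied to the deficiency element of Proposition~\ref{p-defelement} gives $M_\infty=\fD/\fC$, the identity $M_0=-M_\infty^{-1}$ gives Eq.~\eqref{e-m0}, and Eq.~\eqref{e-generalbt} gives Eq.~\eqref{e-mtau}. Your explicit verification that the common factor $\Omega\ci{M_1,M_2}^\al(0)$ and the overall signs cancel in each ratio is a detail the paper leaves implicit, but the argument is the same.
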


\begin{proof}
An application of Proposition \ref{p-btsetup} to the boundary triple $\{\RR,\Gamma_0,\Gamma_1\}$ and the deficiency element $\chi\ci\cL(x,\la)$ yields Eq.~\eqref{e-minfty}. Likewise, the identity $M_0(\la)=-1\backslash M_\infty(\la)$ yields Eq.~\eqref{e-m0}. The $m$-function of $\cL_\tau$ is obtained from Eq.~\eqref{e-generalbt}.
\end{proof}

The spectrum of the self-adjoint operator $\cL_\infty$ consists of the points that are poles of $M_\infty$. The function $\Gamma(-\la-\al)$ is responsible for the eigenvalues $\la=n-\al$, for $n\in\NN_0$. These are exactly the eigenvalues of the extension $\cL_\infty$ for the classical Laguerre operator. In that case, both partitions are empty so $C$, $D$ and the two evaluations at $0$ are all equal to $1$. 
 
The self-adjoint extension $\cL_0$ will possess the exceptional solution of the first kind and therefore have the family of exceptional polynomial eigenfunctions in its domain by Lemma \ref{l-ortho}. Eigenvalues of $\cL_0$ are simply the poles of $M_0$. The corresponding extension of the classical Laguerre expression contains the Laguerre polynomials and has eigenvalues $\la=n$ for $n\in\NN_0$. The function $\Gamma(-\la)$ is responsible for these eigenvalues in Eq.~\eqref{e-m0}.

Comparing these expressions to the classical Laguerre operator, we conclude that eigenvalues $\la$ not stemming from $\Gamma(-\la)$ or $\Gamma(-\la-\al)$ are manufactured by the Darboux transform. The evaluations $\Omega_{\mu,\nu}\left[h^{\al'}(0,\la+t_1)\right]$ and $\Omega_{\mu',\nu'}\left[\wt{h}^{\al-t_1'-t_2'}(0,\la+t_1')\right]$ do not depend on $\la$ except for the Vandermonde determinants $\Delta(n_\mu,m_\nu,-\la)$ and $\Delta(m_{\nu'}',n_{\mu'}',-\la-\al)$, respectively in the numerators. The values $C$ and $D$ can also have factors of $\la$ in their numerators or denominators depending on whether the shifts are positive or negative.

\begin{remme}\label{r-friedrichskvn1}
The Friedrichs extension of $\cL$ is identified as the self-adjoint extension whose boundary condition is given by $[f,u]\ci\cL(0)=0$ for a principal solution $u$ by \cite{MZ}. The two linearly independent particular solutions (near 0) of $\cL$ used in our boundary triple were $\wt{y}_1$ and $\wt{y}_2$. In practice, which solution is principal thus depends on the parameter $\bal$. Hence, one of $\cL_0$ and $\cL_\infty$ will be the Friedrichs extension and, if $\cL>0$, the other will be the Krein--von Neumann extension, see e.g.~\cite[Equation (5.4.26)]{BdS}, but identifying which is which is not always straightforward. 

The easiest way to distinguish these important extensions is to observe that the Friedrichs extension will have the greatest first eigenvalue because it has the greatest lower bound among all extensions, see e.g.~\cite[Proposition 5.3.6]{BdS}. Any dependence of the lowest eigenvalues on $\bal$ may then result in the Friedrichs extension switching from $\cL_0$ to $\cL_\infty$, or vice versa.
\end{remme}

\begin{remme}\label{r-type1check}
Section \ref{s-type1} analyzes the Type I Laguerre exceptional operator $\cL^{I,\al}_m$ and finds that $\sigma(\cL_0)=n$ and $\sigma(\cL_\infty)=(-m-\al)\cup(n+1-\al)$ for $n\in\NN_0$ and fixed $m\in\NN$. These eigenvalues can also be recovered from Eqs.~\eqref{e-minfty} and \eqref{e-m0}. Note it is possible to match the $m$--functions from these two methods but we focus only on the spectra here for the sake of simplicity.

The Darboux transform in this case is identified by \cite[Section 5.2]{BK} to consist of seed functions indexed by $\mu=\emptyset$, $\nu=(m)$ and starting parameter $\al-1$. Hence, $t_1=t_2=t_1'=0$, $t_2'=-m-1$ and $\bal=(\al-1)+1=\al$. Theorem \ref{t-new4.22} sets $C_1=C_2=0$ and Theorem \ref{t-first0} finds that $\Omega_{\mu,\nu}[h^{\al-1}(0,\la)]$ has a factor of $-\la-(\al-1)-1-m=-\la-\al-m$ in the numerator stemming from the Vandermonde determinant. The only dependence on $\la$ in $\fC$ is therefore a factor of $(-\la-\al-m)\Gamma(-\la)$.

After shifting to conjugate canonical position, $M_2=(m-1,\dots,0)$. Theorem \ref{t-new4.22} sets $D_1=0$ and $D_2$ has a factor of $\prod_{k'}(-\la-(\al-1)-1-k')$ with $k'=0,\dots,m-1$ in the denominator. Corollary \ref{c-second0} also generates a factor, for $k=1,\dots m$,
\begin{align*}
\prod_{k}(-\la-[(\al-1)-(-m-1)]+k')=\prod_{k}(-\la-\al-m+k),
\end{align*}
in the numerator of $\Omega_{\mu',\nu'}[\wt{h}^{\al-1}(0,\la)]$. These contributions cancel and the only dependence on $\la$ in $\fD$ is therefore a factor of $\Gamma(-\la-\al+1)$.

Generating the expressions for $M_0$ and $M_\infty$ via Eqs.~\eqref{e-minfty} and \eqref{e-m0} allows for easy identification of the eigenvalues, which agree with the conclusions of Section \ref{s-type1}. 
\end{remme}

\begin{remme}\label{r-pert}
Perturbation theory says that the spectrum of $\cL_0$ will smoothly shift to the spectrum of $\cL_\infty$ as the parameter $\tau$ increases and eigenvalues of all self-adjoint extensions will be simple, see e.g.~\cite{BFL,S2,S}.
\end{remme}

\section{Example}\label{s-example}

	To illustrate the results of Theorem \ref{t-new4.2}, we present the following example.  Consider Maya Diagrams $M_1=(\emptyset\big| 3,2)$ and $M_2=(1,0\big|\emptyset)$, illustrated in Figure \ref{f-original}.
		
	\begin{figure}[h!]\caption{Original Maya Diagrams} \label{f-original}
		\begin{tikzpicture}[scale=.8]
			\draw (0,0) grid(8,1);
			\draw [fill] (.5,0.5) circle [radius=0.1];
			\draw [fill] (1.5,0.5) circle [radius=0.1];
			\draw [fill] (2.5,0.5) circle [radius=0.1];
			\draw [fill] (5.5,0.5) circle [radius=0.1];
			\draw [fill] (6.5,0.5) circle [radius=0.1];
			\draw [thick] (3,-.3)--(3,1.3);
			\node at (2.5,1.2) {$-1$};
			\node at (3.5,1.2) {$0$};
			\node at (4,-0.75) {$M_1$};
		\end{tikzpicture}\hfill
		\begin{tikzpicture}[scale=.8]
			\draw (0,0) grid(8,1);
			\draw [fill] (.5,0.5) circle [radius=0.1];
			\draw [fill] (1.5,0.5) circle [radius=0.1];
			\draw [fill] (2.5,0.5) circle [radius=0.1];
			\draw  [thick] (5,-.3)--(5,1.3);
			\node at (4.5,1.2) {$-1$};
			\node at (5.5,1.2) {$0$};
			\node at (4,-0.75) {$M_2$};
		\end{tikzpicture}
	\end{figure}
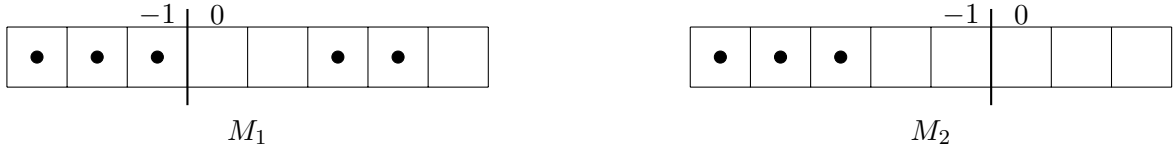

	From $M_1$ and $M_2$, it follows that $n=(3,2)$, $m=\emptyset$, $n'=\emptyset$, and $m'=(1,0)$ as well as $r_1=r_3=2$ and $r_2=r_4=0$.  Observe that $M_1$ is already in canonical form; thus $t_1=0$.  To put $M_2$ into canonical form, a shift of $t_2=2$ is required.  Thus $t_1(M_1)=(\emptyset \big| 3,2)$ and $t_2(M_2)=(\emptyset\big|\emptyset)$.  The visual representation for the canonical form of $M_1$ and $M_2$ may be seen in Figure \ref{f-canonical}, where all boxes to the left of the pictured subdiagram are filled and those to the right are empty. 
	
	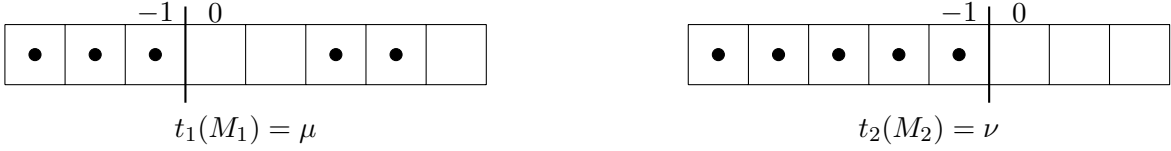
\begin{figure}[h!]\caption{Canonical Maya Diagrams} \label{f-canonical}
		\begin{tikzpicture}[scale=.8]
			\draw (0,0) grid(8,1);
			\draw [fill] (.5,0.5) circle [radius=0.1];
			\draw [fill] (1.5,0.5) circle [radius=0.1];
			\draw [fill] (2.5,0.5) circle [radius=0.1];
			\draw [fill] (5.5,0.5) circle [radius=0.1];
			\draw [fill] (6.5,0.5) circle [radius=0.1];
			\draw [thick] (3,-.3)--(3,1.3);
			\node at (2.5,1.2) {$-1$};
			\node at (3.5,1.2) {$0$};
			\node at (4,-0.75) {$t_1(M_1)=\mu$};
		\end{tikzpicture}\hfill
		\begin{tikzpicture}[scale=.8]
			\draw (0,0) grid(8,1);
			\draw [fill] (.5,0.5) circle [radius=0.1];
			\draw [fill] (1.5,0.5) circle [radius=0.1];
			\draw [fill] (2.5,0.5) circle [radius=0.1];
			\draw [fill] (3.5,0.5) circle [radius=0.1];
			\draw [fill] (4.5,0.5) circle [radius=0.1];
			\draw  [thick] (5,-.3)--(5,1.3);
			\node at (4.5,1.2) {$-1$};
			\node at (5.5,1.2) {$0$};
			\node at (4,-0.75) {$t_2(M_2)=\nu$};
		\end{tikzpicture}
	\end{figure}
	Using Eqs.~\eqref{e-eigen1} and \eqref{e-eigen2} in Eq.~\eqref{e-firstgen} yields
	\begin{align}\label{e-omega0}
		\Omega\ci{M_1,M_2}\left[h^\alpha(x,\lambda)\right]&=x^{2(\alpha+3)} \Wr\left[L_3^\alpha(x), L_2^\alpha(x), x^{-\alpha}L_1^{-\alpha} (x), x^{-\alpha}L_0^{-\alpha}(x),M(-\lambda,\alpha+1,x)\right].
	\end{align}
	Following the procedure outlined in the proof of Theorem \ref{t-new4.2}, Eq.~\eqref{e-r30red} of Corollary \ref{c-0reduction} will be applied twice to shift $M_1$ and $M_2$ into $t_1(M_1)=\mu$ and $t_2(M_2)=\nu$. To demonstrate how Theorem \ref{t-new4.2} works, we illustrate the first application of the shift on $M_2$.  

    Beginning with Eq.~\eqref{e-omega0}, we first factor out $x^{-\al}$ from each column
    \begin{align*}
        \Omega\ci{M_1,M_2}\left[h^\alpha(x,\lambda)\right]&=x^{2(\al+3)}x^{-5\al}\Wr\left[x^\al L_3^\al(x), x^\al L_2^\al (x), L_1^{-\al}(x), 1, x^\al M(-\la, \al+1,x)\right]. 
    \end{align*} To compute the Wronskian, we expand along the first row, fourth column entry, using appropriate derivative rules
    \begin{align*}
        \Omega\ci{M_1,M_2}\left[h^\alpha(x,\lambda)\right]&=x^{2(\al+3)}x^{-5\al}\Wr\left[(3+\al)x^{\al-1}L_3^{\al-1}(x), (2+\al)x^{\al-1} L_2^{\al-1} (x), \right.\\
        &\hspace{1.5 in} \left.-L_0^{-\al+1}(x), \al x^{\al-1} M(-\la, \al,x)\right]. 
    \end{align*} Factoring out constants and $x^{\al-1}$ from each factor in the Wronskian yields
    \begin{align*}
        \Omega\ci{M_1,M_2}\left[h^\alpha(x,\lambda)\right]&=-\al (\al+2)(\al+3)x^{\al+2}\\
        & \quad \quad \quad \quad \times\Wr\left[L_3^{\al-1}(x), L_2^{\al-1} (x), x^{-\al+1}L_0^{-\al+1}(x), M(-\la, \al,x)\right]\\
        &=-\al(\al+2)(\al+3)\Omega_{\wt{M}_1,\wt{M}_2}\left[h^{\al-1}(x,\lambda)\right],
    \end{align*} where $\wt{M}_1=M_1=(\emptyset\big|3,2)$ and $\wt{M}_2=M_2+1=(0\big|\emptyset)$. Observe that just as Eq.~\eqref{e-r30red} states, the parameter $\al$ has been shifted down by one while $\la$ does not change.  One additional application of Eq.~\eqref{e-r30red} gives 
	\begin{align*}
		\Omega\ci{M_1, M_2} &\left[h^\alpha(x,\lambda)\right]
		\\&=-\alpha(\alpha+2)(\alpha+3)x^{-3\alpha+6}\cdot\Wr \left[x^{\alpha-1}L_3^{\alpha-1}(x),x^{\alpha-1}L_2^{\alpha-1}(x),1, x^{\alpha-1}M(-\lambda, \alpha, x)\right]\\
		&=-(\alpha-1)\alpha(\alpha+1)(\alpha+2)^2(\alpha+3)\cdot\Wr \left[L_3^{\alpha-2}(x),L_2^{\alpha-2}(x),1, x^{\alpha-1}M(-\lambda, \alpha-1, x)\right]\\
		&=C_1\cdot C_2\cdot C_3\cdot\Omega_{\mu, \nu}^{\alpha-2}(x, \lambda),
	\end{align*} where $C_1=1$, $C_2=\alpha(\alpha-1)$, and $C_3=(\alpha+1)(\alpha+2)^2(\alpha+3)$.  Keep in mind that the signs of coefficients $C_1$, $C_2$, and $C_3$ are suppressed.  This result is consistent with that guaranteed by Theorem \ref{t-new4.2}.

    Now, we aim to adjust $M_1$ and $M_2$ into conjugate canonical form, see Figure \ref{f-conjugate} below.  Observe that $M_1$ must be shifted left by four units; thus $t'_1=-4$.  To put $M_2$ into conjugate canonical form, a shift of $t'_2=2$ is required.  Note that in this example, $M_2$ has the same canonical and conjugate canonical form.  Thus $t'_1(M_1)=(3,2 \big|\emptyset)$ and $t'_2(M_2)=(\emptyset\big|\emptyset)$. Following the procedure outlined in Theorem \ref{t-new4.22}, Eq.~\eqref{e-r30red2} of Corollary \ref{c-0reduction2} is applied twice to shift $M_2$ into conjugate canonical form, and the inverse of Eq.~\eqref{e-r40red2} is applied twice to $M_1$, followed by two applications of Eq.~\eqref{e-r40red2} to shift $M_1$ into conjugate canonical form.  Note that there is some freedom as to how the shifts are applied.  For example, $M_1$ may be put into canonical form before $M_2$; however, the order of the shifts applied to $M_1$ must be done in the prescribed order.  
    
	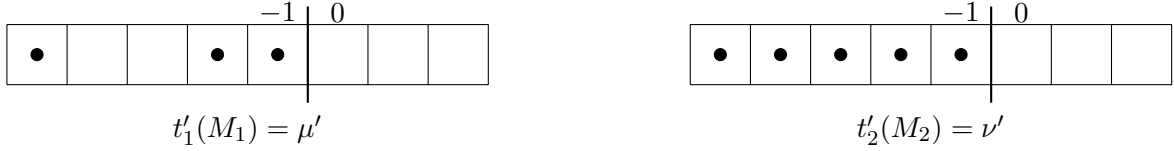
\begin{figure}[h!]\caption{Conjugate Canonical Maya Diagrams} \label{f-conjugate}
		\begin{tikzpicture}[scale=.8]
			\draw (0,0) grid(8,1);
			\draw [fill] (.5,0.5) circle [radius=0.1];
			\draw [fill] (3.5,0.5) circle [radius=0.1];
			\draw [fill] (4.5,0.5) circle [radius=0.1];
			\draw [thick] (5,-.3)--(5,1.3);
			\node at (4.5,1.2) {$-1$};
			\node at (5.5,1.2) {$0$};
			\node at (4,-0.75) {$t'_1(M_1)=\mu'$};
		\end{tikzpicture}\hfill
		\begin{tikzpicture}[scale=.8]
			\draw (0,0) grid(8,1);
			\draw [fill] (.5,0.5) circle [radius=0.1];
			\draw [fill] (1.5,0.5) circle [radius=0.1];
			\draw [fill] (2.5,0.5) circle [radius=0.1];
			\draw [fill] (3.5,0.5) circle [radius=0.1];
			\draw [fill] (4.5,0.5) circle [radius=0.1];
			\draw  [thick] (5,-.3)--(5,1.3);
			\node at (4.5,1.2) {$-1$};
			\node at (5.5,1.2) {$0$};
			\node at (4,-0.75) {$t'_2(M_2)=\nu'$};
		\end{tikzpicture}
	\end{figure}
    Consequently, Eq.~\eqref{e-secondgen} may be written, up to a change of sign, as
    \begin{align*}
        \Omega\ci{M_1,M_2}\left[\wt{h}^\al(x,\lambda)\right]&=D_1\cdot D_2\cdot D_3 \cdot \Omega_{\mu',\nu'}\left[\wt{h}^{\al+2}(x,\lambda-4)\right]\,,
    \end{align*}
    where 
    \[
        D_1=\frac{\al(\al+1)(\al+2)(\al+3)}{\lambda(\lambda-1)},\; 
        D_2=\frac{(\lambda+\alpha)(\lambda+\alpha+1)}{(\al+2)(\al+3)}, \mbox{ and }
        D_3=(\al+1)(\al+2)^2(\al+3).
    \]
    
    Now, we turn our attention to calculating $\Omega\ci{M_1,M_2}\left[h^\al(0,\lambda)\right]$ and $\Omega\ci{M_1,M_2}\left[\wt{h}^\al(0,\lambda)\right]$, which requires the use of Theorem \ref{t-first0} and Corollary \ref{c-second0} respectively.  For the shift to canonical position, $\mu=(2,2)$ and $\nu=\emptyset$; $r(\mu)=2$ and $r(\nu)=0$; and $s=3$ and $s'=2$. Note that $\mu$ is even, satisfying Assumption \ref{a-partitions}.  Applying Theorem \ref{t-first0} gives
    \begin{align*}
        \Omega_{\mu, \nu}\left[h^{\al-2}(0,\lambda)\right]=\frac{\alpha(\alpha+1)(3-\lambda)(2-\lambda)}{12}\,.
    \end{align*}
    For the shift to conjugate canonical position, $\mu'=(2,2)$ and $\nu'=\emptyset$; $r(\mu')=2$ and $r(\nu')=0$; and $\bs=3$ and $\bs'=2$.  Applying Corollary \ref{c-second0} gives
    \begin{align*}
        \Omega_{\mu',\nu'}\left[\wt{h}^{\al+2}(0,\lambda-4)\right]=\frac{(\al-1)\al(\lambda-1)\lambda}{12}\,.
    \end{align*}
    
    The value $\bal=\al-0-2+2+0=\al$ satisfies the requirement that $\bal>-1$ because $\al>-1$. Hence, the exceptional Laguerre differential expression $\cL$ is limit-circle at $x=0$ and limit-point at $x=\infty$.  Now, it is possible to explicitly write out $\fC$ and $\fD$ as
    \begin{equation*}
        \fC=\frac{(\al-1)\al^3(\al+1)^2(\al+2)^2(\al+3)(3-\lambda)(2-\lambda)\Gamma(-\al)}{12\Gamma(-\lambda-\alpha)\left[\Omega\ci{M_1,M_2}^\al(0)\right]^2}
    \end{equation*}
    and
    \begin{equation*}
        \fD=\frac{(\al-1)\al^2(\al+1)^2(\al+2)^2(\al+3)(\lambda+\al)(\lambda+\al+1)\Gamma(\al)}{12\Gamma(-\lambda)\left[\Omega\ci{M_1,M_2}^\al(0)\right]^2}.
    \end{equation*}
    The quotient of $\fD$ and $\fC$ yields
    \begin{align}\label{e-exMInf}
        M_\infty(\lambda)=\frac{(\lambda+\al)(\lambda+\al+1)\Gamma(\al)\Gamma(-\lambda-\al)}{\al(3-\lambda)(2-\lambda)\Gamma(-\al)\Gamma(-\lambda)}.
    \end{align} To find the spectrum of the self-adjoint operator $L_\infty$, consider where Eq.~\eqref{e-exMInf} has poles. The poles occur for eigenvalues $\lambda=n-\alpha$ for $n\in\mathbb N_0$ as well as $\lambda=2$ and $\lambda=3$.  However, the pole at $\la=-\al$ is cancelled by the factor of $\la+\al$ in the numerator. Thus $\sigma(L_\infty)=\{n-\alpha\}_{n\in \mathbb N}\cup\{2,3\}$.
    
    Considering the reciprocal of $M_\infty$, we get
    \begin{align}
        M_0(\lambda)=\frac{\al(3-\lambda)(2-\lambda)\Gamma(-\al)\Gamma(-\lambda)}{(\lambda+\al)(\lambda+\al+1)\Gamma(\al)\Gamma(-\lambda-\al)}
    \end{align} which has poles at $\lambda=n$ for all $n \in \mathbb N_0$ as well as $\lambda=-\al$ and $\lambda=-\alpha-1$. The poles at $\la=2,3$ are cancelled by factors in the numerator. Thus $\sigma(L_0)=\{n\}_{n \in \NN_0/\{2,3\}} \cup \{-\al,-\al-1\}$.

\begin{remme}\label{r-friedrichsvkn2}
The removal of the eigenvalues $\la=2,3$ from $\sigma(\cL_0)$ and $\la=-\al$ from $\sigma(\cL_\infty)$ is crucial, otherwise $\sigma(\cL_0)\cap\sigma(\cL_\infty)$ is nontrivial. 
Assume some $\la_0\in\RR$ belongs to $\sigma(\cL_0)\cap\sigma(\cL_\infty)$. Then the spectral measures of the two extensions are not mutually singular, violating the Aronszajn--Donoghue theorem for rank-one perturbations, see e.g.~\cite{A,S2}

Furthermore, Remark \ref{r-friedrichskvn1} identifies that $\cL_\infty$ is the Friedrichs extension for all values of $\al>-1$, as comparing the first eigenvalues shows $1-\al>-1-\al$.
\end{remme}

\section{Conclusions}\label{s-conclusion}

Exceptional Laguerre-type symmetric expressions $\cL$, which have XOPs as eigenfunctions for a self-adjoint extension, may be derived from the classical Laguerre symmetric operator with parameter $\al$ by applying Darboux transforms. These are written as Wronskians of seed functions that meet Assumption \ref{a-partitions} and the seed functions are described via the Maya Diagrams $M_1$ and $M_2$ or, in special situations, as partitions $\mu$, $\nu$, $\mu'$ or $\nu'$.

The expression $\cL$ acts on functions in the Hilbert space $L^2[(0,\infty), \cW(x)]$ that satisfy standard differentiability criteria. The weight function 
\begin{align*}
    \cW(x)=\frac{x^{\bal}e^{-x}}{\left[\Omega^{\al'}_{\mu,\nu}(x)\right]^2} \quad\text{ for } x>0,
\end{align*}
where $\al'=\al-t_1-t_2$, $\bal=\al'+r$, and $\Omega^{\al}_{\mu,\nu}(x)$ is defined by Eq.~\eqref{e-particularlaguerre}. The expression $\cL$ is limit-circle at $x=0$ and limit-point at $x=\infty$ when $\al$ is chosen so that $-1<\bal<1$. 

Laguerre-type XOPs of order $n\in\NN_{\mu,\nu}$ are written as $L^{\bal}_{\mu,\nu,n}$ and the sequence forms a complete orthogonal set in $L^2[(0,\infty), \cW]$ by Lemma \ref{l-ortho}, see Remark \ref{r-poly} for more details. These polynomials all belong to a family of eigenfunctions given by $\Omega\ci{M_1,M_2}[h^\al(x,\la)]$ and are in a self-adjoint extension of $\cL$ denoted $\cL_0$. 

Eigenvalues of self-adjoint extensions of $\cL$, denoted $\cL_\tau$ in general, can be extracted from an explicit Weyl $m$-function given by Eq.~\eqref{e-mtau} generated by a boundary triple. The resulting $m$-function can be compared to the $m$-function of self-adjoint extensions of the classical Laguerre symmetric operator. Differences stem from two important processes: shifting Maya diagrams into canonical/conjugate canonical position, and evaluating Wronskians with these shifted diagrams at $x=0$. Other spectral information, such as the strength of the point masses in the spectral measure, can also be obtained from the $m$-function, see e.g.~\cite{BFL}.

The examples presented in Sections \ref{s-type1} and \ref{s-example} provide insight into how eigenvalues are changed by different types of Maya diagrams. Inverse spectral theory, where a set of eigenvalues is given and a corresponding set of seed functions within the Wronskian is determined (thereby fixing the differential equation and self-adjoint extension), however, remains unknown in general and warrants further investigation. 


\section*{Acknowledgements}

The authors would like to thank Niels Bonneux for correspondence that provided a detailed proof of \cite[Lemma 5.1]{D}. This correspondence was instrumental in the proofs of Theorem \ref{t-first0} and Corollary \ref{c-second0}.


\end{document}